\newcommand{\cgh}{\mathfrak{c}^{\text{\textnormal{GH}}}}
\newcommand{\btikz}[2]{\begin{figure}[h]\caption{#1}
\vspace{3pt}
\label{#2}\begin{center}\begin{tikzpicture}}
\newcommand{\etikz}{\end{tikzpicture}\end{center}\end{figure}}
\newcommand{\uk}{\mfk{u}}
\newcommand{\lk}{\mfk{l}}
\begin{document}
\title[Lattice Formulas For Rational SFT Capacities]{Lattice Formulas For Rational SFT Capacities}
\author{J.~Chaidez}
\address{Department of Mathematics\\University of California, Berkeley\\Berkeley, CA\\94720\\USA}
\email{jchaidez@berkeley.edu}
\vs
\maketitle

\begin{abstract} We initiate the study of the rational SFT capacities of Siegel using tools in toric algebraic geometry. In particular, we derive new (often sharp) bounds for the RSFT capacities of a strongly convex toric domain in dimension $4$. These bounds admit descriptions in terms of both lattice optimization and (toric) algebraic geometry. Applications include (a) an extremely simple lattice formula for for many RSFT capacities of a large class of convex toric domains, (b) new computations of the Gromov width of a class of product symplectic manifolds and (c) an asymptotics law for the RSFT capacities of all strongly convex toric domains. 
\end{abstract}

\section{Introduction} A symplectic capacity $\mathfrak{c}$ is, roughly speaking, a numerical invariant assigning a number $\mathfrak{c}(X)$ to each symplectic manifold $X$ (possibly of a restricted type). The main axiom of symplectic capacities is that
\begin{equation}
\mathfrak{c}(X) \le \mathfrak{c}(X') \qquad\text{if}\qquad X \text{ symplectically embeds into }X'
\end{equation}
Here a symplectic embedding of symplectic manifolds $X \to X'$ is a smooth, codimension $0$ embedding $\varphi:X \to X'$ that intertwines the symplectic forms. The prototypical example of a capacity is the \emph{Gromov width} $\mathfrak{c}_{\on{Gr}}$, defined as follows.
\begin{equation} \label{eqn:def_Gromov_width}
\mathfrak{c}_{\on{Gr}}(X) := \on{sup}\big\{\pi r^2 \; : \; B^{2n}(r) \text{ symplectically embeds into }X\big\}\end{equation}

\vspace{3pt}

Over the last several decades, various symplectic capacities (and their close relatives, spectral invariants) have taken on an increasingly prominent role in symplectic geometry. They have found dramatic applications to symplectic embedding problems (cf. \cite{hut_qua_11,mcd_sym_09,mcd_hof_11,cri_sym_14,cm2018,chs_hig_21}), as well as to questions in Hamiltonian and Reeb dynamics (cf. \cite{i2015,s2019}).

\subsection{RSFT capacities} \label{subsec:intro_RSFT_capacities}

\vspace{3pt}

Sympectic field theory (SFT) is a Floer homology framework for contact manifolds and symplectic cobordisms introduced by Eliashberg--Givental--Hofer \cite{egh2000}. Variants of SFT are built using chain groups generated (in some way) by closed Reeb orbits and whose differentials count (pseudo)holomorphic curves in symplectizations. Symplectic cobordisms induce maps of chain groups, well-defined up to homotopy. 

\vspace{3pt}

In \cite{sie_hig_19} Siegel introduced a new family of capacities of Liouville domains constructed using rational symplectic field theory (RSFT). Rational SFT is a variant of symplectic field theory that counts only genus $0$ holomorphic curves. Each \emph{RSFT capacity}, denoted by 
\begin{equation}\mathfrak{r}_P(X) \quad \text{for a Liouville domain}\quad X\end{equation}
is indexed by a tangency constraint $P$. A \emph{tangency constraint} at $m$ points specifies a way that a holomorphic curve can pass through a set of $m$ points, possibly with some order of tagency to a local divisor at each point. Every tangency constraint $P$ has an associated \emph{codimension} $\on{codim}(P)$. This is the amount that imposing that $P$ reduces the dimension of a moduli space of curves. A detailed discussion of tangency constraints and RSFT capacities appears in \S \ref{sec:RSFT_capacities}.

\vspace{3pt}

Roughly speaking, the RSFT capacity $\mathfrak{r}_P(X)$ is the area of a punctured, disconnected, genus $0$ holomorphic curve $C$ in $X$ of Fredholm index $\on{codim}(P)$ that satisfies the tangency constraint $P$ at a fixed set of points in $X$.

\begin{figure}[h]
\centering
\caption{A cartoon of a disconnected curve $C = C_1 \cup C_2 \subset X$ (in green) satisfying a tangency constraint at $4$ points (in blue) and tangent to divisors $D_i$ (in red).}
\includegraphics[width=.8\textwidth]{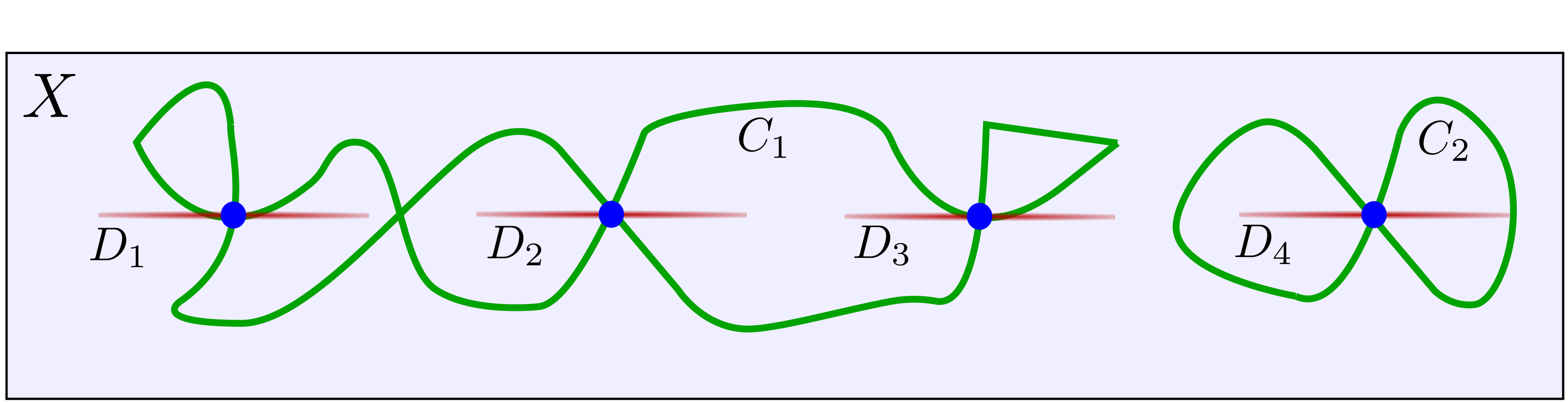}
\label{fig:tangency_condition_intro}
\end{figure}

\subsection{Toric domains} \label{subsec:intro_toric_domains} In general, symplectic capacities are difficult to compute. However, a number of elegant formulas are known in the toric setting. 

\begin{definition} A \emph{toric domain} $X_\Omega \subset \C^n$ is a domain given by the inverse image of a \emph{moment domain} $\Omega \subset [0,\infty)^n$ under the moment map
\begin{equation}
\mu:\C^n \to \R^n \qquad \text{with}\qquad \mu(z_1,\dots,z_n) = (\pi |z_1|^2,\dots,\pi |z_n|^2)
\end{equation}
\end{definition}

Toric domains are relatively accessible objects of study in symplectic geometry since the Reeb dynamics on their contact boundaries and their holomorphic curves are very well understood. The symplectic capacities of toric domains have been studied extensively. See the work of Lu \cite{lu2006}, Choi et al \cite{ccfhr_sym_14}, Landry et al \cite{lmt2015}, Cristofaro-Gardiner \cite{cri_sym_14}, Gutt--Hutchings \cite{gh_sym_18}, Gutt at al \cite{ghr2020} Hutchings \cite{hut_qua_11,hut_ech_19}, Siegel \cite{sie_com_19}, Wormleighton \cite{wor_ech_19,wor_tow_21} and Chaidez--Wormleighton \cite{cw_ech_20}.

\vspace{3pt}

Many of the results on capacities in the toric setting apply when $\Omega$ satisfies a convexity or concavity hypothesis. In this paper, we will study the following special type of domain.

\begin{definition} \label{def:strongly_convex} A moment domain $\Omega \subset [0,\infty)^n$ is \emph{strongly convex} if the domain
\begin{equation}
\widehat{\Omega} := \{(x_1,\dots,x_n) \; : \; (|x_1|,\dots,|x_n|) \in \Omega\} \qquad\text{is convex and compact}
\end{equation}
We say that $\Omega$ is \emph{weakly convex} if $\Omega$ is convex as a subset of $\R^n$ and contains a neighborhood of $0$. \end{definition} 

\noindent In \cite{gh_sym_18,hut_qua_11,wor_ech_19}, several formulas for the capacities of convex toric domains are given in terms of the optimum $\Omega$-norm of a (set of) lattice vectors in $\Z^n$ satisying a few restrictions.

\begin{definition} \label{def:omega_norm} The \emph{$\Omega$-norm} of a strongly convex domain $\Omega \subset \R^n$ is the function
\begin{equation}
\|\cdot\|_\Omega^*:\R^n \to [0,\infty) \qquad\text{with}\qquad \|v\|_\Omega^* \; := \; \sup_{u\in\Omega} \; \langle u,v\rangle
\end{equation}\end{definition}

\noindent For instance, the following lattice formula for the Gutt--Hutchings capacities of strongly convex domains is proven in \cite{gh_sym_18}.
\begin{equation}\label{eqn:Gutt_Hutchings} \mathfrak{c}_k^{\on{GH}}(X_\Omega) = \on{min}\big\{\|v\|_\Omega^* \; : \; v = (k_1,\dots,k_n) \in \Z^n \;\text{with}\; k_i \ge 0 \;\text{and}\; \sum_i k_i = k\big\}\end{equation}
A similar lattice formula for the ECH capacities of convex toric domains is proven in \cite{hut_qua_11} and stated more explicitly in \cite{wor_ech_19}.

\vspace{3pt}

When a convex moment domain $\Omega$ is a rational-sloped polytope it determines a fan $\Sigma$ and a closed (possibly singular) toric variety $Y_\Sigma$ equipped with a symplectic form $\omega_\Omega$ dual to an ample divisor $A_\Omega$. The interior of $X_\Omega$ symplectically embeds into $Y_\Sigma$. This will be discussed in \S \ref{subsec:toric_varieties}.

\vspace{3pt}

This paper is part of a program \cite{wor_ech_19,cw_ech_20,wor_alg_20,wor_tow_21} of the second author (and sometimes the first) to understand the relationship between the quantitative symplectic geometry of $X_\Omega$ and the algebraic geometry of $Y_\Sigma$.

\subsection{Main results} \label{subsec:main_results} The purpose of this paper is to provide bounds and in some cases exact formulas for many of the RSFT capacities of a convex toric domain in dimension $4$. These bounds are formulated as lattice optimization problems, in the spirit of \cite{gh_sym_18,hut_qua_11}. 

\vspace{3pt}

\subsubsection{Lower bound} Our first main result is the following lattice optimization lower bound for the RSFT capacities of a convex toric domain in any dimension. 
\begin{thm*}[Corollary \ref{cor:main_lower_bound}] \label{thm:main_lowerbound} Let $\Omega \subset \R^n$ be a weakly convex moment domain, and let
\[\mathfrak{l}_k(\Omega) = \underset{m \ge k+1}{\on{min}}\big\{\sum_{i=1}^m \|v_i\|_\Omega^* \; : \; v_i \in \Z^n \setminus 0 \;\text{satisfying}\; \sum_i v_i = 0 \big\}\]
Let $P$ be a tangency constraint of codimension $(2n - 2) \cdot k$. Then
\[\lk_k(\Omega) \leq \mfk{r}_P(X_\Omega)\]
\end{thm*}
The basic outline of the proof is as follows. By a simple limiting argument, we can assume that $\Omega$ is smooth and contained in $(0,\infty)^n$. In this case, we can identify $X_\Omega$ with the unit disk bundle in $T^*T^n$ with respect to a certain $T^n$-invariant Finsler norm on $T^n$.
\[D^*_\Omega T^n \subset T^*T^n\]
The Reeb orbits (i.e. Finsler geodesics) on the boundary $S^*_\Omega T^n$ live in $S^1$-Morse--Bott families. Each family corresponds to a (non-zero) integer vector $v \in \Z^n \setminus 0$ and the action of an orbit in the family is given by the $\Omega$-norm of $v$. 

\vspace{3pt} The model case is $\Omega = B^n$, when this is simply the standard picture of geodesics on flat $T^n$. See Figure \ref{fig:lower_bound_intro} for a depiction of this discussion.

\begin{figure}[h]
\centering
\caption{A cartoon of the correspondence between collections of lattice points and null-homologous sets of Finsler geodesics on $T^n$.}
\includegraphics[width=.9\textwidth]{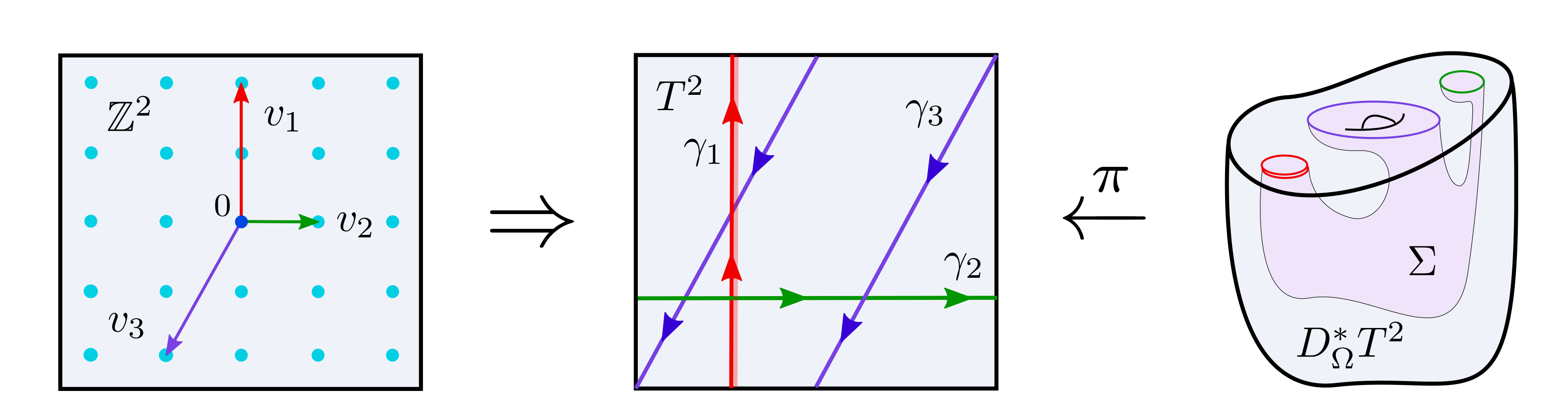}
\label{fig:lower_bound_intro}
\end{figure}

The RSFT capacity $\mathfrak{r}_P(D^*_\Omega T^n)$ is, intuitively, the area of a rigid, punctured holomorphic curve $C$ satisfying the tangency constraint $P$. The curve $C$ must have Fredholm index satisfying
\begin{equation} \label{eqn:intro_lb_proof_1} \on{ind}(C) = \on{codim}(P)\end{equation}
Otherwise, the virtual dimension of the moduli space of curves satisfying the constraint $P$ would not be $0$ near $C$, and thus $C$ would not be rigid. On the other hand, we can use a simple index calculation to prove that any such punctured curve must have
\begin{equation} \label{eqn:intro_lb_proof_2} \on{ind}(C) \le (2n-2) \cdot (\# \text{ punctures of }C - 1)\end{equation}
Combining (\ref{eqn:intro_lb_proof_1}) and (\ref{eqn:intro_lb_proof_2}), and using the fact that the area of $C$ is precisely the sum of the actions of the Reeb orbits that it bounds gives Theorem \ref{thm:main_lowerbound}. A detailed argument is given in \S \ref{sec:lower_bounds_via_RSFT}.

\begin{remark} This argument was inspired by a paper \cite[\S 3]{cm2018} of Cieliebak--Mohnke. \end{remark}

\subsubsection{Upper bound} Our second main result is a lattice upper bound for the RSFT capacities $\mathfrak{r}_P$ of a strongly convex toric domain in dimension $4$. It is similar to Theorem \ref{thm:main_lowerbound} with two key differences. 

\vspace{3pt}

First, the tangency constraints $P$ in Theorem \ref{thm:main_upperbound} are \emph{lax}. Informally, a tangency constraint $P$ at $m$ points $p_1,\dots,p_m \in X$ is lax if a immersed surface $\Sigma$ satisfying $P$  has only one branch running through each point $p_i$. Rephrased, only one marked point of $\Sigma$ passes through each point $p_i$. See \S \ref{subsec:tangency_constraints} and Definition \ref{def:laxness} for a more precise description.

\vspace{3pt}

Second, we optimize over a slightly different set of lattice vector sequences. Namely, we will only consider sequences $v_1,\dots,v_m$ satisfying the following conditions.
\begin{equation}\label{eqn:upperbound_condition_intro}
\text{if $v_i \in \text{span}(e_j)$ for each $i$ and a fixed $j \in \{0,1\}$, then $m = 2$, $v_1 = \pm e_j$ and $v_2 = \pm e_j$.}
\end{equation}

\begin{thm*}[Theorem \ref{thm:main_upperbound_body}] \label{thm:main_upperbound} Let $\Omega \subset \R^2$ be a strongly convex moment domain and let
\[\mathfrak{u}_k(\Omega) = \underset{m \ge k+1}{\on{min}}\big\{\sum_{i=1}^m \|v_i\|_\Omega^* \; : \; v_i \in \Z^2 \setminus 0 \;\text{satisfying}\; \sum_i v_i = 0 \; \text{and}\; (\ref{eqn:upperbound_condition_intro}) \big\}\]
Let $P$ be a lax tangency constraint of codimension $2k$. Then
\[\mfk{r}_P(X_\Omega) \leq \uk_k(\Omega)\]
\end{thm*}

Despite the similarity of the upper bound in Theorem \ref{thm:main_upperbound} to the lower bound in Theorem \ref{thm:main_lowerbound}, the proof is entirely different. It factors through a connection to toric algebraic geometry. We now outline this argument.

\vspace{3pt}

First assume (via a limiting argument) that $\Omega$ is a rational polytope. In this situation, $\Omega$ determines a closed (possibly singular) toric variety $Y_\Sigma$ associated to a fan $\Sigma$. The variety $Y_\Sigma$ comes with a natural symplectic form $\omega_\Omega$ determined by $\Omega$ and the interior of $X_\Omega$ embeds into $Y_\Sigma$ as the complement of a divisor $A_\Omega$ in $Y_\Sigma$ that is dual to $\omega_\Omega$. We can reformulate $\mathfrak{l}_k$ and $\mathfrak{u}_k$ as minima over sequences $\bar{v}$ of vectors $v_i$, each of which generates a $1$-dimensional cone in the fan $\Sigma$. A sequence of this type precisely specifies an effective, movable curve class $C$ in the toric variety $Y_\Sigma$. Moreover, the number of vectors is related to the Chern class of $Y_\Sigma$ evaluated on $C$. 

\begin{figure}[h]
\centering
\caption{An cartoon of the correspondence between sequences $\bar{v}$ of fan vectors and curves $C$ in $Y_\Sigma$. Each ray $\rho_i$ in $\Sigma$ yields a divisor $D_i$ in $Y_\Sigma$, and the intersections of $C$ with $D_i$ are governed by the number of vectors in $\bar{v}$ in the ray $\rho_i$.}
\includegraphics[width=.9\textwidth]{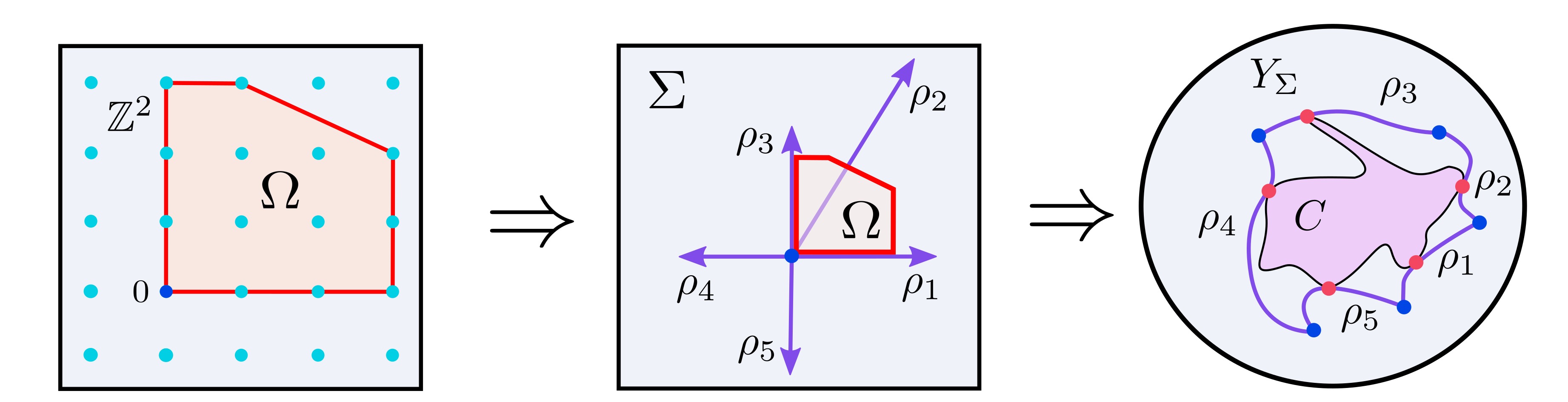}
\label{fig:upper_bound_intro}
\end{figure}

Inspired by the above intepretation of $\bar{v}$, we turn our attention to closed curves in $Y_\Sigma$. Now, after a small shrinking $X_\Omega$ symplectically embeds into $(Y_\Sigma,\omega_\Omega)$. Furthermore, a basic property of the RSFT capacities implies that $\mathfrak{r}_P(X_\Omega)$ is bounded by the area of any curve class $C$ that has non-vanishing Gromov--Witten invariants $\on{GW}_P(Y_\Sigma,C)$ with tangency condition $P$. The invariant $\on{GW}_P$ was introduced by McDuff--Siegel \cite{ms_cou_19} and is discussed in \S \ref{subsec:gromov_witten_invariants}. 

\vspace{3pt}

Thus, we must show that the curve class $C$ associated to vector $\bar{v}$ has non-vanishing Gromov--Witten invariant $\on{GW}_P$. When $P$ is lax, a result of McDuff--Siegel (see Lemma \ref{lem:immered_spheres}) implies further that it suffices to find an immersed symplectic sphere representing $C$ in homology. Such representatives can be constructed geometrically from certain singular rational curves, called \emph{cocharacter curves} (see \S \ref{subsec:cocharacter_curves}) when $\Omega$ is strictly convex. This proof is carried out in detail in \S \ref{subsec:upper_bounds_via_movable_curves}.

\begin{remark} The strange condition (\ref{eqn:upperbound_condition_intro}) is disappointing, since $\mathfrak{l}_k$ and $\mathfrak{u}_k$ would coincide in an ideal world. In fact, (\ref{eqn:upperbound_condition_intro}) arises naturally in the context of our proof.

\vspace{3pt}

The curve classes ruled out by (\ref{eqn:upperbound_condition_intro}) are precisely the higher multiples $k \cdot F$ of a self-intersection $0$ sphere $F$. A simple argument with adjunction shows that $k \cdot F$ can only be represented by an immersed symplectic sphere with positive self-intersections when $k = 1$. Thus the proof of Theorem \ref{thm:main_upperbound} only works in that case. This observation was made in \cite[p. 65, Ex 5.1.4]{ms_cou_19} for $\P^1 \times \P^1$.
\end{remark}

\begin{remark} The lower bound $\mfk{l}_k$ can also be formulated (very naturally) in algebro-geometric terms for polarized varieties $(Y,A)$ of any dimension. We do this in \S\ref{subsec:algebraic_formulation}.
\end{remark}

\subsubsection{Closed obstructions} Our proof of Theorem \ref{thm:main_upperbound} implies an estimate for the RSFT symplectic capacities of \emph{any} star-shaped domain that embeds into a toric surface. 

\begin{thm*}[Theorem \ref{thm:main_upperbound_closed_body}] \label{thm:main_upperbound_closed} Let $X \to Y_\Sigma$ be a symplectic embedding of a star-shaped domain $X$ into a strongly convex toric surface $Y_\Sigma$ with symplectic form $\omega_\Omega$. Then
\begin{equation} \label{eqn:main_upperbound_closed}
\mfk{r}_P(X) \leq \uk_k(\Omega) \quad\text{where}\quad \text{$P$ is lax and } \on{codim}(P) = 2k
\end{equation}
\end{thm*}

There is also a stable version of Theorem \ref{thm:main_upperbound_closed} for certain tangency conditions. This result can be used to obstruct embeddings into higher dimensional manifolds. For convenience, let $P(k)$ denote the $1$-point tangency condition for a surface passing through a single point $p$ with tangency order $k+1$ at a divisor $D$ through $p$. 

\begin{thm*}[Corollary \ref{cor:main_upperbound_stable_body}] \label{thm:main_upperbound_stable} Let $X \to Y_\Sigma \times Z$ be a symplectic embedding of a star-shaped domain $X$ into the product of a strongly convex toric surface $Y_\Sigma$ and a closed symplectic manifold $Z$. Then
\begin{equation} \label{eqn:main_upperbound_stable}
\mfk{r}_{P(k)}(X) \leq \uk_{k+1}(\Omega) 
\end{equation}
\end{thm*}

\subsection{Applications} We now discuss several applications of the main results to symplectic embedding problems and computation problems for symplectic capacities.

\subsubsection{Calculations} We can associate widths $a(\Omega)$ and $b(\Omega)$ to a convex domain $\Omega \subset \R^2$ as so.
\begin{equation}a(\Omega) := \max{a \; : \; (a,0) \in \Omega} \qquad b(\Omega) := \max{b \; : \; (0,b) \in \Omega}\end{equation}
The bounding quantities $\mathfrak{l}_k$ and $\mathfrak{u}_k$ agree when these two widths coincide.
\begin{prop*}[Lemma \ref{lem:lk_uk_equal_when_a_is_b}] If $\Omega$ is strongly convex and $a(\Omega) = b(\Omega)$ then $\mathfrak{l}_k(\Omega) = \mathfrak{u}_k(\Omega)$ for all $k$. \end{prop*}
\noindent As a consequence, we have the following extremely simple closed form for many RSFT capacities.
\begin{cor*} \label{cor:simple_lattice_formula} If $\Omega \subset \R^2$ is strongly convex with $a(\Omega) = b(\Omega)$, and $P$ is a lax tangency constraint, then
\[
\mfk{r}_P(X_\Omega) = \lk_k(\Omega) \quad\text{where}\quad  \on{codim}(P) = 2k
\]
\end{cor*}
\noindent This formula is applicable to cubes, balls and many other convex domains.

\begin{example} \label{ex:lk_computation} Consider the strongly convex region depicted below, with parameter $r\in\Z_{\geq1}$.
\begin{figure}[h]
\centering
\caption{The strongly convex region $\Omega$}

\vspace{9pt}
\includegraphics[width=.2\textwidth]{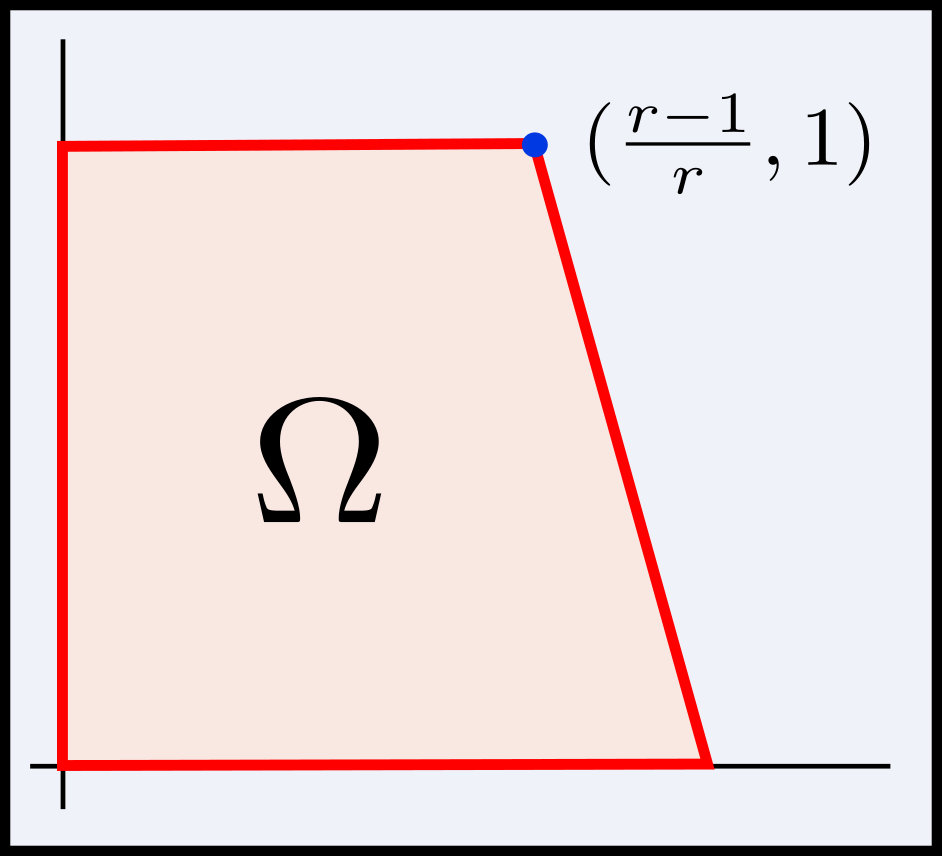}
\label{fig:examin_convex_domain}
\end{figure}

\noindent A domain $\Omega$ of this form satisfies $a(\Omega)=b(\Omega)$ and so we are able to compute the RSFT capacities with lax tangency constraints by applying Corollary \ref{cor:simple_lattice_formula}. We compute that
\begin{equation} \mathfrak{r}_P(X_\Omega) = \mfk{l}_k(\Omega)=\begin{cases}
\tfrac{k+1}{2} & \text{$k = \on{codim}(P)/2$ is odd} \\
\tfrac{k-2}{2}+\frac{2r-1}{r} & \text{$k = \on{codim}(P)/2$ is even}
\end{cases} \end{equation}
We can apply (\ref{eqn:Gutt_Hutchings}) for the Gutt--Hutchings capacities and the lattice formula for ECH capacities \cite[Cor.~A.12]{cri_sym_19} to compare these two capacities with the $\mfk{l}_k$. See Table \ref{table:comparison} for a comparison of the first $5$ capacities when $r = 4$. 

\begin{table}[!h]
\caption{Comparing $\mfk{l}_k$ to $\cgh_k$ and $\cech_k$}
\begin{tabular}{c|c|c|c}
$k$ & $\mfk{l}_k = \mathfrak{r}_P$ & $\cgh_k$ & $\cech_k$ \\ 
\hline
1 & 1 & 1 & 1 \\  
2 & $\tfrac{7}{4}$ & $\tfrac{7}{4}$ & $\frac{7}{4}$ \\
3 & 2 & $\tfrac{5}{2}$ & 2 \\
4 & $\tfrac{11}{4}$ & $\tfrac{13}{4}$ & $\frac{11}{4}$ \\
5 & 3 & 4 & 3
\end{tabular}
\label{table:comparison}
\end{table}
\end{example}

\begin{remark}[Toric Computation] In \S \ref{subsec:fan_formulation} we introduce a formulation of $\lk_k(\Omega)$ in terms of a toric resolution of singularities, or a refinement $\widetilde{\Sigma}$ of the fan $\Sigma$ supporting $\Omega$. It is often advantageous to use this toric reformulation in calculations, and we used it when computing Example \ref{ex:lk_computation}. \end{remark}

\subsubsection{Strong Viterbo} Recall that a symplectic capacity $\mathfrak{c}$ is called \emph{normalized} if it satisfies
\[\mathfrak{c}(B^{2n}) = \mathfrak{c}(B^2 \times \C^{n-1}) = \pi\]
A well-known conjecture of Viterbo states that all normalized capacities agree on convex sets.
\begin{conjecture*}[Viterbo] If $\mathfrak{c}$ is a normalized capacity and $X \subset \C^n$ is convex, then $\mathfrak{c}(X) = \mathfrak{c}_{\on{Gr}}(X)$. 
\end{conjecture*}

A toric manifold $X$ is convex as a subset of $\C^n$ if and only if it is strongly convex \cite[Prop 2.3]{ghr2020}. In this setting the Viterbo conjecture have been verified \cite[Thm 1.7]{ghr2020}, and any normalized capacity is given by
\begin{equation} \mathfrak{c}_{\on{Gr}}(X) = \on{min}(a(\Omega),b(\Omega))\end{equation}
We show that $\mathfrak{l}_1(\Omega)$ and $\mathfrak{u}_1(\Omega)$ are both given by $\on{min}(a(\Omega),b(\Omega))$ (see Lemma \ref{lem:first_bound}). This independently verifies the toric version of the Viterbo conjecture for the normalized RSFT capacity $\mathfrak{r}_{\on{pt}}$ corresponding the constraint $\on{pt}$ of passing through a point.

\begin{prop*} \label{prop:Gromov_width_and_rP_agree} Let $X$ be a (strongly) convex toric domain. Then $\mathfrak{r}_{\on{pt}}(X) = \mathfrak{c}_{\on{Gr}}(X)$.
\end{prop*}

\subsubsection{Gromov width} We can use Proposition \ref{prop:Gromov_width_and_rP_agree} to compute the Gromov width of a large number of products of closed symplectic manifolds. 

\begin{prop*} Let $Y$ be a strongly convex toric symplectic $4$-manifold with moment polytope $\Omega$ and let $Z$ be a closed, semi-positive symplectic manifold with Gromov width larger than that of $Y$. Then
\[
\mathfrak{c}_{\on{Gr}}(Y \times Z) = \mathfrak{c}_{\on{Gr}}(Y) = \on{min}(a(\Omega),b(\Omega))
\] 
\end{prop*}

\begin{proof} First, fix radii $r > 0$ such that $\pi r^2 < \mathfrak{c}_{\on{Gr}}(Y)$. Then we have a symplectic embedding
\[
B^{4 + 2n}(r) \to B^{4}(r) \times B^{2n}(r) \to Y \times Z \qquad\text{where}\qquad \on{dim}(Y) = 2n
\]
Taking the limit as $\pi r^2$ approaches $\mathfrak{c}_{\on{Gr}}(Y) = \on{min}(a(\Omega),b(\Omega))$ yields the desired lower bound of $\mathfrak{c}_{\on{Gr}}(Y \times Z)$. Conversely, take any symplectic embedding $B^{4 + 2n}(r) \to Y \times Z$. Then by Theorem \ref{thm:main_upperbound_stable}
\[
\pi r^2 = \mathfrak{l}_1(\Delta(r))\le \mathfrak{r}_{\on{pt}}(B^{4 + 2n}(r)) \le \mathfrak{r}_{\on{pt}}(Y \times Z) \le \mathfrak{u}_1(\Omega) = \on{min}(a(\Omega),b(\Omega))
\]
Here $\Delta(r)$ is the moment polytope of $B^{4 + 2n}(r)$ in $\R^{2 + n}$.
\end{proof}

\begin{remark} In general, the Gromov width $\mathfrak{c}_{\on{Gr}}(X \times Y)$ is not even bounded by a multiple of $\mathfrak{c}_{\on{Gr}}(X)$. For instance, a result of Lalonde \cite{l1994} states that if $\Sigma$ is a closed surface, then
\[\mathfrak{c}_{\on{Gr}}(\Sigma \times \C) = \infty \quad\text{while} \quad \mathfrak{c}_{\on{Gr}}(\Sigma) = \text{area}(\Sigma)\]
From this, we deduce that the Gromov width of $X(a) = \Sigma \times S(a)$ where $S(a)$ is any symplectic surface of area $a$ diverges as $a \to \infty$.
\end{remark}

\subsubsection{Asymptotics} Many capacities, e.g. the ECH capacities and the Gutt--Hutchings capacities, come in natural families indexed by the integers. The asymptotic behavior of these capacities as the index goes to $\infty$ is of significant interest \cite{chr_asy_15,gh_sym_18,cs_sub_18,wor_tow_21} and is key to some of the dynamical applications \cite{i2015}. The RSFT capacities are naturally indexed by the codimension of the tangency constraint, and it is natural to ask how these capacities behave as the codimension diverges. 

\vspace{3pt}

In \S \ref{sec:algebraic_bounds}, we use an algebraic reformulation of $\lk_k$ and $\uk_k$, taking as input a polarized algebraic surface, to analyze their asymptotic behavior. In particular, we prove the following formula.

\begin{lemma}[Lemma \ref{lem:toric_asymtotics}] \label{lem:toric_asymtotics_intro} Let $\Omega$ be a strongly convex moment domain. Then
\[
\lim_{k\to\infty}\frac{\mfk{l}_k(\Omega)}{k}=\lim_{k\to\infty}\frac{\mfk{u}_k(\Omega)}{k}= \inf_{(w_1,w_2)\in\Z^2_{\geq0} \setminus 0}\frac{\|(w_1,w_2)\|_\Omega^*}{1+w_1+w_2}
\]
\end{lemma}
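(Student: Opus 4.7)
The plan is to sandwich both limits against
\[
\alpha \;:=\; \inf_{w\in\Z^2_{\geq0}\setminus 0}\frac{\|w\|_\Omega^*}{1+w_1+w_2}.
\]
Since the admissible sequences for $\mfk{u}_k$ form a subset of those for $\mfk{l}_k$, one has $\mfk{l}_k \le \mfk{u}_k$, so it suffices to show $\liminf_k \mfk{l}_k(\Omega)/k \ge \alpha$ and $\limsup_k \mfk{u}_k(\Omega)/k \le \alpha$. An essential preliminary is that strong convexity forces $\Omega$ to be a \emph{lower set} in $[0,\infty)^2$ (projecting a point of $\widehat{\Omega}$ in the positive quadrant onto either coordinate axis keeps it in $\widehat{\Omega}$), which yields $\|v\|_\Omega^* = \|v^+\|_\Omega^*$ for $v^+ := (\max(v_1,0),\max(v_2,0))$; in particular $\|v\|_\Omega^* = 0$ iff $v \le 0$.

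\textbf{Lower bound.} Introduce the auxiliary function $\phi(v) := \|v\|_\Omega^* - \alpha(v_1^+ + v_2^+)$ on $\Z^2$. By the definition of $\alpha$ applied to $w = v^+$ we have $\phi(v) \ge \alpha$ whenever $v^+ \ne 0$, while $\phi(v) = 0$ when $v \le 0$. For any admissible sequence $v_1,\dots,v_m$ for $\mfk{l}_k$, set $W_j := \sum_i v_{i,j}^+$, let $p$ count indices with $v_i^+ \ne 0$, and put $n := m - p$. Summing the bounds on $\phi$ gives
\[
\sum_i \|v_i\|_\Omega^* \;\ge\; \alpha(W_1 + W_2) + \alpha p.
\]
The balance $\sum v_i = 0$ forces $W_j = \sum_i v_{i,j}^-$, and each of the $n$ purely non-positive nonzero integer vectors contributes at least $1$ to $W_1 + W_2$, so $W_1 + W_2 \ge n$ and therefore $\sum_i \|v_i\|_\Omega^* \ge \alpha m \ge \alpha(k+1)$. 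Dividing by $k$ yields $\liminf_k \mfk{l}_k(\Omega)/k \ge \alpha$.

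\textbf{Upper bound.} Given $w = (w_1,w_2) \in \Z^2_{\geq 0}\setminus 0$, form the balanced block
\[
B_w \;:=\; \big(w,\; \underbrace{-e_1,\dots,-e_1}_{w_1\ \text{copies}},\; \underbrace{-e_2,\dots,-e_2}_{w_2\ \text{copies}}\big)
\]
consisting of $1 + w_1 + w_2$ nonzero integer vectors summing to $0$, whose total $\Omega$-norm is exactly $\|w\|_\Omega^*$ (using $\|{-e_j}\|_\Omega^* = 0$). When $w_1, w_2 \ge 1$, the vector $w$ lies in no coordinate span, so $N$ copies of $B_w$ automatically satisfy (\ref{eqn:upperbound_condition_intro}); letting $N \to \infty$ gives $\limsup_k \mfk{u}_k(\Omega)/k \le \|w\|_\Omega^*/(1+w_1+w_2)$. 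Taking the infimum over admissible $w$ yields the desired bound.

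\textbf{Main obstacle.} The only technicality is the axis case $w = (w_1,0)$, where naive repetition of $B_w$ places all vectors in $\text{span}(e_1)$ and violates (\ref{eqn:upperbound_condition_intro}) unless $N=w_1=1$. The fix is to append a single breaking pair $(e_2,-e_2)$, which contributes only the bounded quantities $b(\Omega)$ to the total norm and $2$ to the vector count, both independent of $N$; the limiting ratio remains $\|w\|_\Omega^*/(1+w_1+w_2)$, while the augmented sequence now contains vectors in both coordinate spans so (\ref{eqn:upperbound_condition_intro}) holds vacuously. This breaking-pair correction is the only place in the argument that does not reduce to a direct application of the definition of $\alpha$ combined with the balance constraint $\sum_i v_i = 0$.
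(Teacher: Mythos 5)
Your proof is correct and takes a genuinely different route from the paper. The paper derives the formula in three stages: first Proposition \ref{prop:lk_uk_asymptotics}, an algebro-geometric statement that $\lk_k/k$ and $\uk_k/k$ both converge to $\inf_{C\in\on{Mov}_1(Y)_\Z}(A\cdot C)/(-K_Y\cdot C)$ for any pseudo-polarized smooth surface with effective $-K_Y$ (proved using $\R$-curve relaxation, resolution of singularities, and a perturbation by a fixed big-and-movable class $B$ to equate the $\lk_k$ and $\uk_k$ limits); second, Lemma \ref{lem:toric_asymtotics_polytope}, which specializes to toric surfaces via cocharacter curves, Corollaries \ref{cor:cocharacter_area}--\ref{cor:cocharacter_index}, and a mediant-type averaging inequality; and third, a $C^0$-approximation of a general strongly convex $\Omega$ by rational polytopes plus the scaling property. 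Your argument bypasses all of this and works directly with the lattice formulation of Definition \ref{def:lattice_def_lk_uk}: the only structural input is the lower-set property of $\Omega$ (a consequence of strong convexity), which yields $\|v\|_\Omega^*=\|v^+\|_\Omega^*$ and $\|{-e_j}\|_\Omega^*=0$; the lower bound follows from the linear potential $\phi(v)=\|v\|_\Omega^*-\alpha(v_1^++v_2^+)$ together with the balance constraint, and the upper bound from explicit block sequences with the breaking-pair correction to satisfy (\ref{eqn:upperbound_condition_intro}) in the axis cases. What your approach loses is the paper's broader algebraic generality (Proposition \ref{prop:lk_uk_asymptotics} applies to non-toric surfaces as well). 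What it gains is transparency, a self-contained argument that needs no approximation step (since it applies directly to arbitrary strongly convex $\Omega$), and as a bonus a clean uniform lower bound $\lk_k(\Omega)\ge\alpha(k+1)$ for every $k$, stronger than what the paper's asymptotic argument extracts.
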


\noindent By combining Lemma \ref{lem:toric_asymtotics_intro} with Theorems \ref{thm:main_lowerbound} and \ref{thm:main_upperbound}, we deduce the following asymptotic formula.

\begin{thm*} \label{thm:intro_asymptotics} Let $\Omega \subset \R^2$ be a strongly convex moment domain and let $P_i$ be a sequence of lax tangency constraints with $\on{codim}(P) \to \infty$. Then
\begin{equation} \label{eqn:intro_asymptotics} \frac{\mathfrak{r}_{P_i}(X_\Omega)}{\on{codim}(P_i)} \to \inf_{(w_1,w_2)\in\Z^2_{\geq0}\setminus 0}\frac{\|(w_1,w_2)\|_\Omega^*}{2(1+w_1+w_2)} \qquad\text{as}\qquad i \to \infty \end{equation}
\end{thm*}

In the toric setting, the analogous limit of the Gutt--Hutchings capacities coincides with the Lagrangian torus capacity of Cieliebak--Mohnke \cite{cm2018}. Similarly, the analogous limit of the ECH capacities is the volume. Thus, we pose the following natural question.

\begin{question*} Does the righthand side of (\ref{eqn:intro_asymptotics}) coincide with a known symplectic capacity?
\end{question*}

\subsection*{Outline} This concludes {\bf \S 1}, the introduction. The rest of the paper is organized as follows.

\vspace{4pt}

In {\bf \S 2}, we cover preliminaries in tangency conditions (\S \ref{subsec:tangency_constraints}), rational SFT (\S \ref{subsec:RSFT}), Gromov--Witten theory (\S \ref{subsec:gromov_witten_invariants}) and RSFT capacities (\S \ref{subsec:axiomatic_capacities}). We establish several properties of the tangency Gromov--Witten invariants and RSFT capacities that are not explicitly proven in \cite{sie_hig_19,ms_cou_19}.

\vspace{4pt}

In {\bf \S 3}, we cover preliminaries in (toric) algebraic geometry. We review intersection theory (\S \ref{subsec:intersection_theory}), polarized varieties (\S \ref{subsec:polarized_varities}) and toric varieties (\S \ref{subsec:toric_varieties}). We also introduce cocharacter curves (\S \ref{subsec:cocharacter_curves}) and elucidate their relationship with the movable cone.

\vspace{4pt} 

In {\bf \S 4}, we formally introduce the bounds $\mathfrak{l}_k$ and $\mathfrak{u}_k$. We give four equivalent definitions: an algebro-geometric one (\S \ref{subsec:algebraic_formulation}), a toric (or fan) one (\S \ref{subsec:fan_formulation}), a lattice one (\S \ref{subsec:lattice_formulation}) and a polytope one (\S \ref{subsec:polytope_formulation}). We conclude by discussing the asymptotics of the bounds (\S \ref{subsec:asymptotics}).

\vspace{4pt}

In {\bf \S 5}, we prove Theorem \ref{thm:main_lowerbound} (\S \ref{subsec:proof_of_lower_bound}). As preparation, we discuss the Reeb dynamics of cosphere bundles (\S \ref{subsec:cosphere_bundles}) and free toric domains (\S \ref{subsec:free_domain}).

\vspace{4pt}

In {\bf \S 6}, we prove Theorems \ref{thm:main_upperbound}, \ref{thm:main_upperbound_closed} and \ref{thm:main_upperbound_stable}. The main tool is a construction of immersed symplectic spheres from cocharacter curves (\S \ref{subsec:torus_knots}-\ref{subsec:symplectic_spheres}).

\subsection*{Acknowledgements} We would like to thank Kyler Siegel and Dusa McDuff for several helpful discussions near the start of this project. JC was supported by the NSF Graduate Research Fellowship under Grant No.~1752814.

\section{Rational SFT capacities} \label{sec:RSFT_capacities}

In this section, we review of the construction of the rational SFT capacities of Siegel \cite{sie_hig_19,sie_com_19} and the Gromov--Witten invariants with tangency constraints of McDuff--Siegel \cite{ms_cou_19}. 

\begin{remark} We assume familiarity with basic concepts in symplectic and contact geometry, including Reeb flows, Conley-Zehnder indices, etc. 
\end{remark}

\subsection{Tangency constraints} \label{subsec:tangency_constraints} We begin with a brief discussion of tangency constraints, which play a significant part in both the symplectic field theory and the Gromov--Witten theory of this paper. 

\subsubsection{Basic definitions} We will treat this concept in a very formal way. 

\begin{definition} A \emph{tangency constraint} $P$ is a sequence of sequences of positive integers
\[
P = (P^1,\dots,P^m) \quad\text{where}\quad P^i = (P^i_1,\dots,P^i_{k_i}) \quad \text{and}\quad P^i_j \le P^i_{j+1}
\]
and an integer $\on{dim}(P) \ge 1$ called the \emph{dimension} of $P$. The \emph{number of points} $\# P$ is the number $m$.
\end{definition}

\begin{remark} We view a tangency constraint $P$ intuitively as follows. Consider a manifold $X$ of dimension $\on{dim}(P)$. Choose a distinct point and a germ of a codimenion $2$ sub-manifold
\[p_i \in X \qquad\text{and}\qquad D_i \subset X\]
A smooth immersion $u:\Sigma \to X$ satisfies the tangency condition $P$ if for the choice of $(p,D)$ if
\[u(s^i_j) = p_i \qquad\text{and}\qquad u \text{ is tangent to $D_i$ at order $P^i_j + 1$ at $s^i_j$}\]
for some set of points $s^i_j$ corresponding to each integer $P^i_j$. See Figure \ref{fig:tangency_condition} for a depiction. \end{remark}

\begin{figure}[h]
\centering
\caption{The tangency condition $P = ((0,1),(0,0),(2),(1,1))$ in a manifold $X$. The green arcs represent the pieces of a surface $\Sigma$ intersecting the divisors $D_i$.}
\includegraphics[width=.8\textwidth]{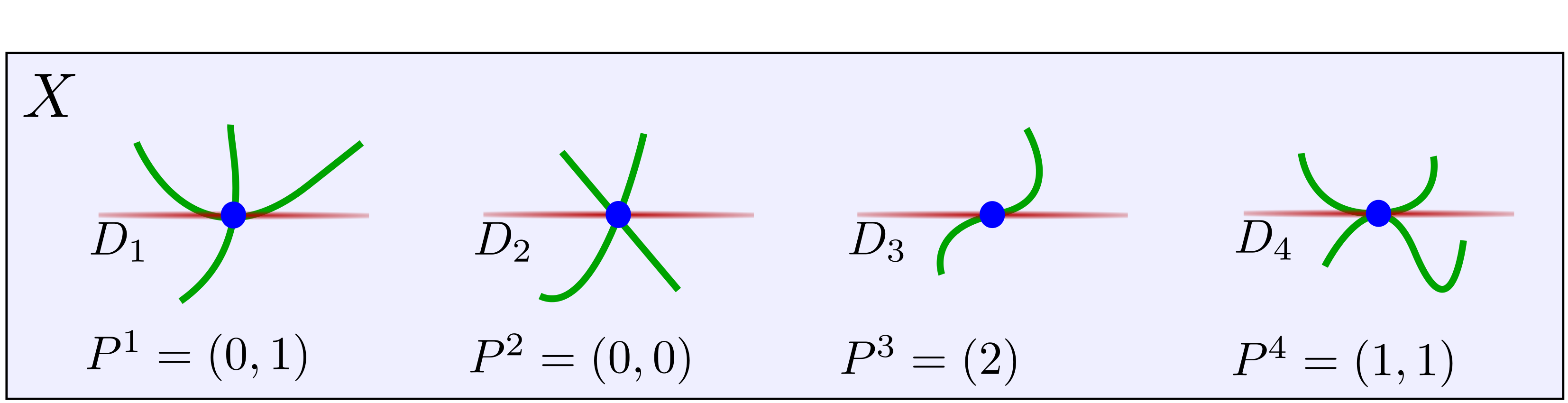}
\label{fig:tangency_condition}
\end{figure}

Every tangency constraint has an associated codimension, which measures the amount that the dimension of a moduli space of surfaces is cut down by when that constraint is imposed.

\begin{definition} The \emph{codimension} $\on{codim}(P)$ of a tangency constraint is defined by the formula
\begin{equation}
\on{codim}(P) = \sum_{i,j} \on{dim}(P) - 2 + 2P^i_j
\end{equation}
\end{definition}

\subsubsection{Operations} There are two useful monoidal operation on tangency constraints. The first may be viewed as adding together two sets of tangency constraints at the same set of points. 

\begin{definition} The \emph{catenation} $P \bullet Q$ of two tangency constraints $P$ and $Q$ with the same dimension and number of points is given by
\[P \bullet Q = (P^1 \bullet Q^1, \dots, P^m \bullet Q^m) \qquad \on{dim}(P \bullet Q) = \on{dim}(P)\]
where $P^i \bullet Q^i$ is the list acquired by joining the lists $P^i$ and $Q^i$, then ordering the result.\end{definition}

\noindent The second may be viewed as imposing the constraints $P$ and $Q$ on a surface $\Sigma$ simultaneously at a disjoint set of points for $P$ and $Q$.

\begin{definition} The \emph{union} $P \sqcup Q$ of two tangency constrainst $P$ and $Q$ with $\on{dim}(P) = \on{dim}(Q)$ is
\[P \sqcup Q = (P^1,\dots,P^m,Q^1,\dots,Q^n) \qquad\text{if}\qquad P = (P^1,\dots,P^m)\quad \text{and}\quad Q = (Q^1,\dots,Q^n)\]
The dimension $\on{dim}(P \sqcup Q)$ is just the dimension of the constraint $P$ (or equivalently of $Q$).
\end{definition}

\noindent Note that the number of points and codimension behave additively under catenation and union.

\begin{figure}[h]
\centering
\caption{An example of catenation and union of two $1$ point tangency constraints.}
\includegraphics[width=.8\textwidth]{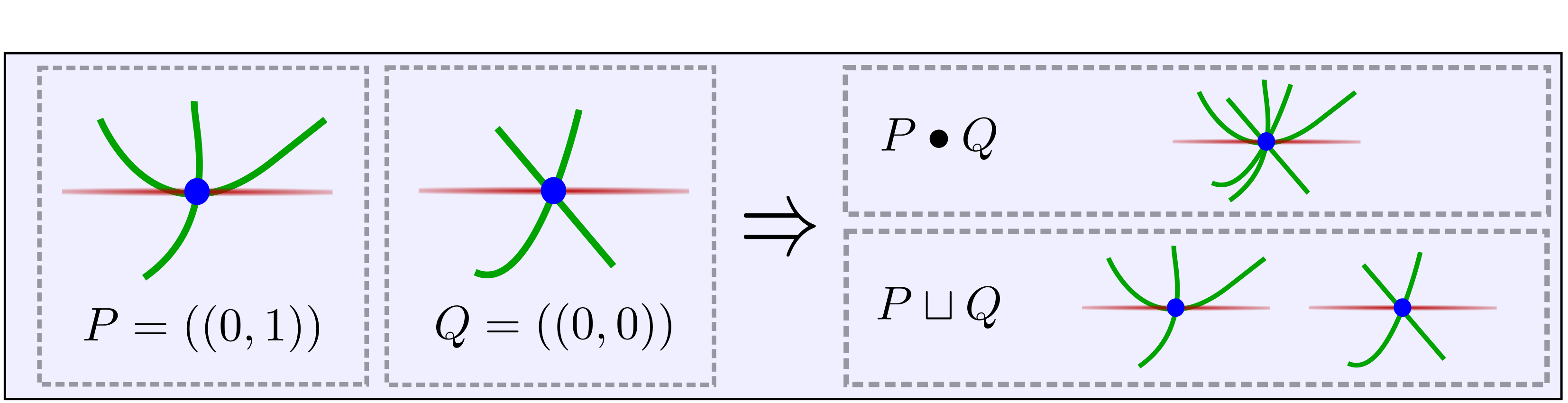}
\label{fig:tangency_catenation_union}
\end{figure}

It is useful to use catenation to form a graded algebra of tangency constraints.

\begin{definition} The \emph{tangency algebra} $\mathcal{T}^m_n$ is the graded $\Q$-module generated by the tangency constraints $P$ at $m$ points in dimension $n$, with
\[\text{product given by }\bullet \qquad\text{and}\qquad \text{grading given by } \on{codim}(-)\]\end{definition}

\noindent The union operation provides a natural, associative isomorphism of graded algebras
\[
T^a_n \otimes T^b_n \xrightarrow{\sim} T^{a + b}_n \qquad\text{with}\qquad P \otimes Q \mapsto P \sqcup Q
\]

\subsubsection{Laxness} Finally, we require the notion of a lax tangency constraint, which will serve as a helpful simplifying hypothesis throughout this paper.

\begin{definition} \label{def:laxness} A tangency condition $P$ is \emph{lax} if each sequence $P^i$ is length $1$.
\end{definition}

\begin{remark} The motivation for the terminology here is that any somewhere immersed surface $\Sigma \subset X$ in our hypothetical smooth manifold $X$ can be made to satisfy a lax tangency constraint by appropriatly choosing the tangency points $p$ and local divisors $D$. \end{remark}

\subsection{Rational symplectic field theory} \label{subsec:RSFT} We next briefly review of the construction and properties of rational SFT as presented by Siegel \cite{sie_hig_19}. This is primarily to establish notation and terminology.

\begin{remark}[Choices] The constructions of RSFT requires various standard choices of compatible complex structures and regularizing data for solving transversality issues (e.g. VFC data via Pardon's VFC framework \cite{p2015}). We ignore this data in the following discussion.
\end{remark}

\subsubsection{Basic formalism} The RSFT chain groups of a contact manifold $Y$ with contact form $\alpha$ are formulated as follows \cite{sie_hig_19}. 

\vspace{3pt}

First, we associate a generator to each good Reeb orbit $\gamma$ with a standard SFT grading and action filtration.
\begin{equation} x_\gamma \qquad \text{with}\qquad |x_\gamma| = -\on{CZ}(\gamma) + n - 3 \mod 2\quad\text{and}\quad\mathcal{A}(x_\gamma) = \int_\gamma \lambda \end{equation}
The \emph{contact algebra} $A(Y)$ is the graded-symmetric algebra freely generated by the elements $x_\Gamma$.
\begin{equation}
A(Y) = \on{Sym}_\bullet\big[x_\gamma \; : \; \gamma\text{ is a good orbit}\big]
\end{equation}
As a $\Q$-module, $A(Y)$ is freely generated by elements $x_{\Gamma} = x_{\gamma_1}\dots x_{\gamma_k}$ where $\Gamma = (\gamma_1,\dots,\gamma_k)$ is an unordered list of good orbits satisfying
\begin{equation} \label{eqn:orbit_set_condition} \text{$\gamma_i$ is not repeated if $|\gamma_i| = 1 \mod 2$}\end{equation}
The \emph{rational SFT bar complex} $BA(X)$ is the graded symmetric bialgebra freely generated by $A(Y)$ with a $+1$ grading shift. That is, we have
\begin{equation} 
BA(Y) = \on{Sym}_\bullet\big[ sx_\Gamma \; : \; \Gamma \text{ satsifies }(\ref{eqn:orbit_set_condition}) \big] \quad\text{where}\quad |sx_\Gamma| = |x_\Gamma| - 1
\end{equation}
As a $\Q$-module, $BA(Y)$ is freely generated by elements $x_{\bar{\Gamma}} = sx_{\Gamma_1} \ocirc \dots \ocirc sx_{\Gamma_k}$, where $\ocirc$ is the product in $BA(Y)$ and $\bar{\Gamma} = (\Gamma_1,\dots,\Gamma_k)$ is an unordered list such that
\begin{equation} \label{eqn:seq_of_orbit_sets}
\Gamma_i \text{ satisfies }(\ref{eqn:orbit_set_condition}) \qquad\text{and}\qquad \text{$\Gamma_i$ is not repeated if $|\Gamma_i| = 0 \mod 2$}
\end{equation}

As usual in SFT, the differentials and cobordism maps on RSFT bar complexes are defined using holomorphic curve counts in symplectic cobordisms. Given a symplectic cobordism $X:Y_+ \to Y_-$ and (sequences of) orbit sets $\bar{\Gamma}_\pm \subset Y_\pm$, we can consider finite energy holomorphic curves (or more generally, buildings) in the completion of $X$.
\[
u:\Sigma \to \hat{X} \qquad \text{with}\qquad u \to \Gamma_\pm \text{ at }\pm\infty
\]
The Fredholm index of a building $u$ only depends on $\bar{\Gamma}$, $\bar{\Xi}$, $\Sigma$ and the homology class $A$ of $u$ in the set $H_2(X,\bar{\Gamma} \cup \bar{\Xi})$ of relative classes with $\partial A = [\bar{\Gamma}_+] - [\bar{\Xi}_-]$. The formula is
\[
\on{ind}(A) = (n-3)\chi(\Sigma) + \on{CZ}_\tau(\bar{\Gamma}_+) - \on{CZ}_\tau(\bar{\Gamma}_-) + 2c_1(A,\tau)
\]
Here $\on{CZ}_\tau$ and $c_1(-,\tau)$ are the Conley-Zehnder index and 1st Chern class with respect to a trivialization of $\xi$ along $\bar{\Gamma}$ and $\bar{\Xi}$.

\vspace{3pt}

Schematially, the differential on the bar complex $BA(Y)$ is given by
\begin{equation}\partial:BA(Y) \to BA(Y) \qquad \text{with}\qquad \partial x_{\bar{\Gamma}} = \sum_{\bar{\Xi}} \; \# \bar{\mathcal{M}}(\bar{\Gamma},\bar{\Xi})/\R \cdot x_{\bar{\Xi}}\end{equation}
Here $\# \bar{\mathcal{M}}_Y(\bar{\Gamma},\bar{\Xi})/\R$ is a signed and weighted count of points in the Fredholm index $1$ components of a moduli space $\bar{\mathcal{M}}_Y(\bar{\Gamma},\bar{\Xi})/\R$ of certain disconnected, genus $0$ holomorphic buildings in the symplectization of $Y$ with positive ends on $\bar{\Gamma}$ and negative ends on $\bar{\Xi}$. Likewise, a symplectic cobordism $X:Y_+ \to Y_-$ induces a cobordism map of RSFT bar complexes 
\begin{equation}\Phi_X:BA(Y) \to BA(Z)
\qquad\text{with}\qquad
\Phi_X(x_{\bar{\Gamma}}) = \sum_{\bar{\Xi}} \; \#\bar{\mathcal{M}}_X(\bar{\Gamma};\bar{\Xi}) \cdot x_{\bar{\Xi}}\end{equation}
Here $\# \bar{\mathcal{M}}_X(\bar{\Gamma},\bar{\Xi})$ is a signed and weighted count of points in the Fredholm index $0$ components of a moduli space $\bar{\mathcal{M}}_X(\bar{\Gamma},\bar{\Xi})$ of certain disconnected, genus $0$ holomorphic buildings in the completion of $X$.

\begin{remark} The precise curves and combinatorial factors that are counted in the construction of $\partial$ and $\Phi$ is most easily explained using the formalism of $L_\infty$-algebra \cite[\S 3.4]{sie_hig_19}. The details of that formalism are unnecessary for this paper. \end{remark}

\begin{figure}[h]
\centering
\caption{A type of disconnected genus $0$ building that could be counted in the cobordism map in the $x_{\bar{\Xi}}$-coefficient of $\Phi_X(x_{\bar{\Gamma}})$.}
\includegraphics[width=.8\textwidth]{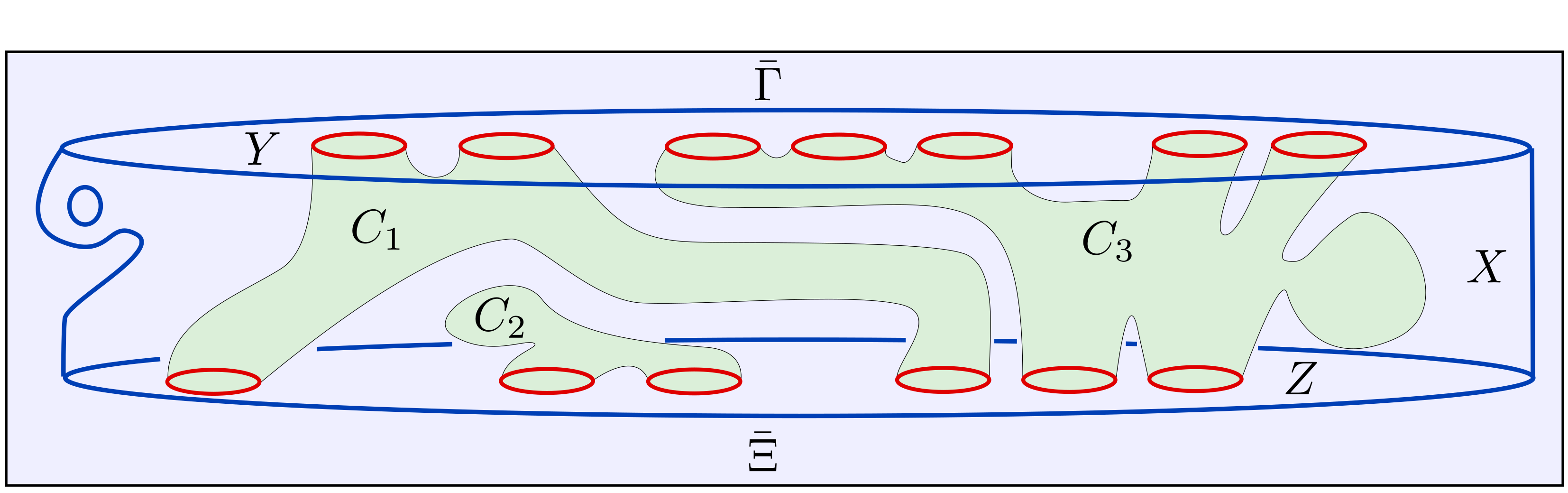}
\label{fig:holomorphic_curve_for_cobordism_map}
\end{figure}

These differentials and cobordism maps respect the action filtration. Furthermore, chain homotopies defined using parametrized curve counts can be used to show that the cobordism maps are well-defined and respect cobordism composition up to filtered chain homotopy.

\subsubsection{The tight sphere} 

The most basic example of a contact manifold is the sphere $S^{2n-1}$, and in this case the RSFT bar complex take a particularly simple form. 

\begin{lemma} \label{lem:RSFT_of_sphere} Let $S^{2n-1}$ denote the $(2n-1)$-sphere with the tight contact structure. Then there are canonical (up to homotopy) quasi-isomorphisms
\[
A(S^{2n-1}) \xrightarrow{\sim} \mathcal{T}_n \qquad BA(S^{2n-1}) \xrightarrow{\sim} B\mathcal{T}_n
\]
\end{lemma}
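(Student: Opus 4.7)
The plan is to compute $A(S^{2n-1})$ directly by perturbing the tight contact form to that of a very thin ellipsoid $\partial E(a_1,\dots,a_n)$ with $a_1 \ll a_2 \le \dots \le a_n$. In any fixed action window, the only Reeb orbits are the iterates $\gamma^k$ of the simple orbit along the short axis, with $\on{CZ}(\gamma^k) = n - 1 + 2k$ and action $k a_1$. Hence the corresponding generators $x_{\gamma^k}$ of $A(S^{2n-1})$ all have even $\Z/2$-grading, so they commute, and $A(S^{2n-1})$ is freely generated as a graded-symmetric algebra by $\{x_{\gamma^k}\}_{k \ge 1}$.

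The first step would be to match this with the tangency algebra. The single-point tangency algebra $\mathcal{T}^1_{2n}$ is freely generated under catenation $\bullet$ by the single-order constraints $P(k-1)$ of codimension $2n - 2 + 2(k-1)$. The assignment $x_{\gamma^k} \mapsto P(k-1)$ extends to a graded algebra isomorphism $A(S^{2n-1}) \xrightarrow{\sim} \mathcal{T}_n$, matching the $\Z/2$-gradings and the action filtration on the left with the codimension filtration on the right up to a positive rescaling by $a_1$.

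The main obstacle is verifying that the RSFT differential on $BA(S^{2n-1})$ is trivial in cohomology. This should follow from a direct analysis in the symplectization of a very thin ellipsoid: any rigid index-$1$ building contributing to $\partial$ has positive and negative ends lying among the $\gamma^k$, and a combined index and action count (in the spirit of the ellipsoid analysis of Gutt--Hutchings \cite{gh_sym_18}) forces every such building to be either trivial or to cancel in signed count. Concretely, the relevant moduli spaces of rational curves in the filling $\C^n$ with prescribed tangency at a point are known to be cut out transversally and carry the same algebraic structure as the tangency operations on $\mathcal{T}_n$.

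The bar-level identification is then formal: both $BA(S^{2n-1})$ and $B\mathcal{T}_n$ are built from their underlying algebras by the functor $\on{Sym}_\bullet(s(-))$, and the union operation $\sqcup$ on tangency constraints at distinct points corresponds precisely to the $\ocirc$-product on $BA$. The algebra isomorphism from the first step, together with the vanishing of the differential, upgrades to the claimed bar-level quasi-isomorphism $BA(S^{2n-1}) \xrightarrow{\sim} B\mathcal{T}_n$.
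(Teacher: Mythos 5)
The paper states this lemma without a proof of its own, implicitly deferring to \cite{sie_hig_19}; so I am evaluating your outline against the standard argument there. Your approach --- perturb the tight contact form to a thin irrational ellipsoid, match $x_{\gamma^k}$ with the one-point constraint $P(k-1)$ via $\on{CZ}(\gamma^k) = n-1+2k$, and show the differential vanishes --- is exactly that route. The index bookkeeping is right: it agrees with the computation in the proof of Lemma~\ref{lem:index_relation_punctured_cobordism}, where the disk bounding the orbit attached to a constraint index $P^i_j$ has $\on{CZ} = (n-1) + 2(P^i_j+1)$, and $\mathcal{T}^1$ is indeed freely generated under $\bullet$ by the $P(k)$.

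The one real soft spot is your argument that $\partial = 0$. Saying the buildings are ``either trivial or cancel in signed count'' mis-describes what happens and is also more than you need: on an irrational ellipsoid there are simply \emph{no} index-$1$ genus-$0$ buildings, so there is nothing to cancel. The clean statement is a parity count. Every Reeb orbit of $\partial E(a_1,\dots,a_n)$ --- along \emph{any} axis, not only the short one --- has $\on{CZ} = n-1+2\cdot(\text{integer})$, all of the same parity. Substituting into $\on{ind} = (n-3)\chi(\Sigma) + \on{CZ}(\Gamma_+) - \on{CZ}(\Gamma_-) + 2c_1$ with $\chi(\Sigma) = 2 - p - q$ for a genus-zero building with $p$ positive and $q$ negative ends gives
\[
\on{ind} \;\equiv\; (n-1)(p+q) + (n-1)(p-q) \;\equiv\; 0 \pmod{2},
\]
so every moduli space the bar differential samples has even virtual dimension, the index-$1$ strata are empty, and $\partial$ vanishes \emph{identically}, not merely up to homotopy. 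This uniform parity observation also disposes of a secondary gap in your sketch: by restricting to a fixed action window you quietly ignore the orbits $\gamma_j^{\ell}$ along the longer axes, which are present in $A(\partial E(a_1,\dots,a_n))$; either invoke the parity argument globally or phrase the identification as a direct limit over thinning ellipsoids. With those two repairs your outline is correct and reproduces the expected proof.
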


\subsubsection{Constrained Cobordism Maps} We now introduce an important generalization of the cobordism maps in RSFT involving the introduction of tangency constrains. 

\vspace{3pt}

First, fix a closed contact manifold $Y$ and let $S_1,\dots,S_m$ be a set of $m$ tight contact spheres of dimension $\on{dim}(Y)$. Note that there exists a degree $\on{codim}(S)$ chain map
\begin{equation} \label{eqn:projection_map_contact_algebra}
A(Y \cup S_1 \cup \dots \cup S_m) \to A(Y) \otimes \Q[\on{codim}(S)] = A(Y)[\on{codim}(S)]\end{equation}
This map is a straightforward extension of the map
\[
A(S_1 \cup \dots \cup S_m) \simeq \bigotimes_i A(S_i) \xrightarrow{\sim} T^{\otimes m}_n \simeq \mathcal{T}^m_n \xrightarrow{\Pi_P}  \Q[\on{codim}(P)]
\]
Here $\Pi_P$ maps element of $T^m_n$ to its $P$ coefficient. This map of contact dg-algebras extends naturally to a map of bar complexes.

\begin{definition}  The degree $\on{codim}(P)$ chain map
\begin{equation}\pi_P:BA(Z \cup S_1 \cup \dots \cup S_m) \to BA(Z)[\on{codim}(P)] \end{equation}
is the unique extension of (\ref{eqn:projection_map_contact_algebra}) to a coalgebra map on $BA(Y \cup S_1 \cup \dots \cup S_m)$. \end{definition}

Next, let $X:Y \to Z$ be a symplectic cobordism and consider the symplectic cobordism
\begin{equation} X(m):Y \to Z \cup S_1 \cup \dots \cup S_m \qquad\text{with}\qquad X(m) = X \setminus B_1 \cup \dots \cup B_m\end{equation}
where $B_i \subset X$ is a small ball and $S_i$ is the boundary $\partial B_i$.

\begin{definition} Let $P$ be a tangency constraint at $m$ points and let $X:Y \to Z$ be a symplectic cobordism. The \emph{$P$-constrained cobordism map}
\[\Phi_{X,P}:BA(Y) \to BA(Z)[\on{codim}(P)]\]
is the chain map of degree $\on{codim}(P)$ given by the composition
\[
BA(Y) \xrightarrow{\Phi_{X(m)}} BA(Z \cup S_1 \cup \dots \cup S_m) \xrightarrow{\pi_P} BA(Z)
\]

\end{definition}

\begin{definition} Let $P$ be a tangency constraint and $Y$ be a contact manifold. Then the \emph{U-map}
\[U_P:BA(Y) \to BA(Y)[\on{codim}(P)]\]
is the $P$-constrained cobordism map $\Phi_{X,P}$ where $X = [0,1] \times Y$ is the trivial cobordism.\end{definition}

\noindent Due to the functoriality of RSFT cobordism maps, we have the following composition laws for constrained cobordism maps (up to filtered homotopy).
\begin{equation} \label{eqn:constrained_cobordism_composition}
\Phi_{U \circ V, P \sqcup Q} \simeq \Phi_{U,P} \circ \Phi_{V,Q}
\end{equation}
Similar composition laws hold (up to homotopy) for the $U$-map as special cases of (\ref{eqn:constrained_cobordism_composition}).
\begin{equation}
\Phi_{X,P} \circ U_Q \simeq \Phi_{X,P \sqcup Q} \simeq U_Q \circ \Phi_{X,P} \qquad U_P \circ U_Q \simeq U_{P \sqcup Q}
\end{equation}
\vspace{3pt}

Observe that there is a map $\pi_*:H_2(X(m),\Gamma \cup P) \to H_2(X,\Gamma)$ for any orbit sets $\Gamma$ in $Y \cup Z$ and any tangency constraint $P$, viewed as an orbit set in $S_1 \cup \dots S_m$. It is useful to understand the relationship between the Fredholm indices of $A$ and $\pi_*A$ when using $\Phi_{X,P}$ in practice.

\begin{lemma} \label{lem:index_relation_punctured_cobordism} Let $A$ be a homology class in $X(m)$ with boundary $[\Gamma_+] - [\Gamma_-] - [P]$. Then
\begin{equation}
\on{ind}(A) = \on{ind}(\pi_*A) - \on{codim}(P)
\end{equation}
\end{lemma}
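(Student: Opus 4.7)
My plan is to apply the Fredholm index formula displayed earlier in the excerpt to each of $A$ and $\pi_*A$ and then compare the two expressions term by term. Write $\Gamma_-^X$ for the negative ends of $A$ on $Z$, and $\Gamma_P$ for the negative ends on the small spheres $S_1 \cup \dots \cup S_m$. Each orbit of $\Gamma_P$ is an iterate of a Hopf fiber on the tight sphere $S_i$: for each pair $(i,j)$ there is one end asymptotic to $\gamma_i^{P^i_j + 1}$. The standard trivialization $\tau$ of $\xi$ along the Hopf iterates on $S_i$ satisfies $\on{CZ}_\tau(\gamma_i^k) = 2k + n - 1$, where $2n = \on{dim}(P)$.

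Next, I would identify the three changes that occur in passing from $A$ to $\pi_*A$. Capping each puncture at $\gamma_i^{P^i_j + 1}$ by a local holomorphic disk $D_{ij}$ in $B_i$ tangent to $D_i$ at order $P^i_j + 1$ at $p_i$ produces: (i) an Euler characteristic change $\chi(\pi_*\Sigma) - \chi(\Sigma) = N(P) := \sum_i k_i$, since each capped puncture contributes $+1$; (ii) a Conley-Zehnder change in which the term $-\on{CZ}_\tau(\Gamma_-)$ gains $\on{CZ}_\tau(\Gamma_P) = \sum_{i,j}(2P^i_j + n + 1)$; and (iii) a relative Chern change of $\sum_{i,j} c_1(D_{ij}, \tau)$.

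The essential technical input is that each $c_1(D_{ij}, \tau) = 0$ for the chosen $\tau$. I would verify this in local coordinates on $B_i \subset \C^n$ with $D_i = \{z_1 = 0\}$ and $D_{ij}$ any holomorphic disk of local intersection number $P^i_j + 1$ with $D_i$ (for instance $w \mapsto (w^{P^i_j + 1}, w, 0, \dots, 0)$). Using the constant complex frame on $TB^{2n}$ to trivialize $\xi$ along the Hopf iterates gives both the CZ formula above and vanishing relative Chern number for each such disk. Assembling the difference via the index formula then yields
\[
\on{ind}(\pi_*A) - \on{ind}(A) = (n-3)N(P) + \on{CZ}_\tau(\Gamma_P) = (n-3)N(P) + (n+1)N(P) + 2\textstyle\sum_{i,j} P^i_j = (2n-2)N(P) + 2\textstyle\sum_{i,j} P^i_j,
\]
which coincides with $\on{codim}(P) = \sum_{i,j}(\on{dim}(P) - 2 + 2P^i_j)$, establishing the lemma.

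The main obstacle I expect is bookkeeping: verifying that the trivialization chosen on the Hopf iterates $\gamma_i^{P^i_j+1}$ is simultaneously compatible with the CZ formula and with the vanishing of the capping-disk Chern numbers for all values of $P^i_j$ at once. The computation is a standard but slightly finicky calculation inside the linear ball model, and once it is carried out the arithmetic collapses cleanly onto $\on{codim}(P)$.
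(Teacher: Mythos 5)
Your proof is correct and takes essentially the same approach as the paper: both identify $\pi_*A - A$ with the collection of local capping disks $D^i_j$ for the orbits $\gamma^i_j$ on the small spheres $S_i$, compute each cap's index using $\on{CZ}(\gamma^i_j) = n - 1 + 2(P^i_j + 1)$ with vanishing relative Chern number, and sum to get $\on{codim}(P)$. The only differences are that the paper invokes additivity of the Fredholm index under cobordism gluing rather than expanding the index formula term by term, and (incidentally) the paper's displayed additivity relation has a sign typo --- it writes $\on{ind}(\pi_*A) + \sum\on{ind}(D^i_j) = \on{ind}(A)$ where the roles of $A$ and $\pi_*A$ should be swapped --- which your term-by-term bookkeeping avoids.
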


\begin{proof} Let $B_1 \cup \dots \cup B_m \subset X$ be the balls in $X$ such that $X(m) = X \setminus (B_1 \cup \dots B_m)$. By the additivity of the Fredholm index under cobordism composition, we have
\[
\on{ind}(\pi_*A) + \sum_{i,j} \on{ind}(D^i_j) = \on{ind}(A)
\]
Here $D^i_j \subset B_i$ is a disk in the ball $B_i$ that bounds the orbit $\gamma^i_j$ on $S_i = \partial B_i$ corresponding to the constraint index $P^i_j$ in $P$. The index of this disk is exactly
\[
\on{ind}(D^i_j) = (n-3) + \on{CZ}(\gamma^i_j) = (n-3) + (n - 1) + 2(P^i_j + 1) = 2n - 2 + 2P^i_j
\]  
The formula now follows from the definition of $\on{codim}(P)$. \end{proof}

\subsection{Gromov--Witten theory} \label{subsec:gromov_witten_invariants} We next discuss the Gromov--Witten invariants with tangency constraints introduced by McDuff--Siegel \cite{ms_cou_19}. Our discussion will be much less detailed than \S \ref{subsec:RSFT} since we will only need properties stated explicitly in \cite{ms_cou_19}.

\vspace{3pt}

Let $X$ be a closed symplectic manifold with a compatible almost complex structure and let $A \in H_2(X)$. Fix a set of $m = \# P$ points and consider the compactified moduli space
\[\bar{\mathcal{M}}_P(X,A)\]
of simple genus $0$ holomorphic curves in the homology class $A$ that satisfy the tangency constrain $P$ at the chosen points. The virtual (i.e. expected) dimension of the moduli space is
\begin{equation} \label{eqn:GW_index}
\on{ind}(A) - \on{codim}(P) = 2(n-3) + 2c_1(A) - \on{codim}(P)\end{equation}
In the dimension $0$ case, a usual signed count of points in the moduli space yields an invariant. 

\begin{thm} \cite[\S 2]{ms_cou_19} \label{thm:tangent_GW_invariants} Let $P$ be a tangency constrain, $X$ be a symplectic manifold and $A \in H_2(X)$ be a homology class with $\on{ind}(A) = \on{codim}(P)$. Then there is an associated Gromov--Witten invariant
\[
\on{GW}_P(X,A) \in \Q
\]
\end{thm}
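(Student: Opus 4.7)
The plan is to construct $\on{GW}_P(X,A)$ as a virtual count of points in the moduli space $\bar{\mathcal{M}}_P(X,A)$ of simple genus $0$ holomorphic curves in class $A$ satisfying $P$ at fixed marked points $p_1,\dots,p_m$. The first step is to set up the geometric data: choose a compatible almost complex structure $J$ on $X$, and for each $i$ choose a germ of a codimension $2$ $J$-holomorphic divisor $D_i$ through $p_i$. A simple curve $u:(\Sigma,\bar s) \to X$ lies in $\bar{\mathcal{M}}_P(X,A)$ iff for each $i,j$ the marked point $s^i_j$ maps to $p_i$ and $u$ is tangent to $D_i$ at $s^i_j$ to order $P^i_j+1$. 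The tangency condition can be encoded as the vanishing of a section of an explicit jet bundle over the universal curve; the local contribution to the expected codimension at $(i,j)$ is $(\on{dim}(X)-2) + 2P^i_j$, which reproduces the formula $\on{codim}(P)$. Combined with the usual Fredholm index $\on{ind}(A) = (n-3)\chi(\Sigma) + 2c_1(A)$ and $\chi(S^2)=2$, the virtual dimension of $\bar{\mathcal{M}}_P(X,A)$ is $\on{ind}(A)-\on{codim}(P)$, so our hypothesis makes it $0$.

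The next step is to equip the moduli space with a virtual fundamental class of the expected dimension. For generic $(J,\{D_i\})$, one expects transversality on the locus of simple curves with pairwise disjoint images not fully contained in any $D_i$, and a standard jet-transversality argument (as in McDuff--Siegel) shows that the tangency section is transverse for generic divisors. However, multiply covered components in stable maps and sphere bubbling prevent naive transversality, so I would invoke a virtual framework — e.g.\ Pardon's implicit atlas machinery \cite{p2015}, the same framework already being used for RSFT in this paper — to produce a rational virtual count $\on{GW}_P(X,A)\in \Q$.

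The third step is compactness of $\bar{\mathcal{M}}_P(X,A)$, which is required to make the count finite. Gromov compactness applied to curves in the fixed class $A$ gives a compactified moduli space of stable maps, and the tangency constraint is closed under Gromov convergence (tangency to order $k$ is a closed condition in the $C^\infty$-topology, and the marked-point positions are fixed). So $\bar{\mathcal{M}}_P(X,A)$ is compact.

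Finally, invariance of $\on{GW}_P(X,A)$ under the auxiliary choices $(J, \{p_i\}, \{D_i\})$ follows from a parametric cobordism argument: one-parameter families of such data produce a cobordism between zero-dimensional virtual moduli spaces, and the signed count is preserved. The main obstacle is the virtual perturbation step: one has to verify that the tangency section extends compatibly to the virtual framework in a way that preserves orientations and rationality, exactly as carried out in \cite[\S 2]{ms_cou_19}. Once this is in place, setting $\on{GW}_P(X,A)$ equal to the rational signed count of the zero-dimensional virtual class completes the construction.
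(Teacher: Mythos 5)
Your sketch outlines a reasonable general-dimension plan, but it diverges from the cited reference in a way the paper itself flags. The theorem is imported from McDuff--Siegel \cite[\S 2]{ms_cou_19}, and the remark immediately following the statement warns that \cite{ms_cou_19} only carries out the construction in dimension $4$, where genus-$0$ automatic transversality and positivity of intersections allow one to define $\on{GW}_P$ via classical (non-virtual) transversality for generic $J$; the higher-dimensional case requires transversality technology that was \emph{not} addressed there. Your proposal instead jumps to Pardon's VFC machinery for arbitrary dimension, and in your final step you assert that the compatibility of the tangency section with the virtual framework is ``exactly as carried out in \cite[\S 2]{ms_cou_19}.'' That is not what \cite{ms_cou_19} does: they never implement tangency constraints inside a virtual framework, precisely because doing so is the hard open step. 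So the main technical content you defer to the reference is in fact absent from it.

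Concretely, the gap is twofold. First, the attribution: to justify $\on{GW}_P(X,A)\in\Q$ by citation one must restrict to $\dim X = 4$ (the case actually used in this paper), and then the correct mechanism is automatic transversality for immersed index-$0$ spheres plus genericity for the divisors, not a virtual count. Second, if you insist on general dimension, you must actually supply the argument that jet/tangency conditions can be imposed compatibly with implicit atlases (or polyfolds, or whatever regularization you pick); this is a genuine piece of work, not a step one can wave through, and as the paper notes it has not been done. Your compactness and cobordism-invariance steps are fine as far as they go, but they are the easy parts; the proposal as written presents a plan whose central step is unverified and misattributed.
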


\begin{remark} We warn the reader that Theorem \ref{thm:tangent_GW_invariants} is only handled in dimension $4$ by \cite[\S 2]{ms_cou_19}. The higher dimensional result requires advanced transversality methods and was not addressed. 
\end{remark}

We will require two special properties of the Gromov--Witten invariants $\on{GW}_P$. The first property is a consequence of Wendl's automatic transversality.

\begin{lemma}[Immersed Spheres] \label{lem:immered_spheres} \cite[Cor 2.3.9]{ms_cou_19} Let $P$ be an $m$-point tangency constraint and let $S$ be an immersed symplectic sphere in a symplectic $4$-manifold $X$ with positive self-intersections. Suppose
\[\text{$S$ satisfies $P$ at $m$ points} \qquad\text{and}\qquad \on{ind}(A) = \on{codim}(P)\]
Then the Gromov--Witten invariant $\on{GW}_P(X,A)$ is positive.
\end{lemma}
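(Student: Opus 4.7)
The plan is to derive positivity of $\on{GW}_P(X,A)$ from Wendl's automatic transversality theorem for immersed $J$-holomorphic curves in symplectic $4$-manifolds, combined with a linear analysis of the jet bundle that cuts out the tangency constraint.

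First I would choose a compatible almost complex structure $J$ on $X$ that makes $S$ a $J$-holomorphic sphere---such a $J$ exists because $S$ is symplectic and immersed---and that near each marked point $p_i$ additionally makes the local divisor $D_i$ from the definition of $P$ into a $J$-holomorphic hypersurface, so that $S$ itself lies in the constrained moduli space $\mathcal{M}_P(X,A)$. Away from $S$ and the $D_i$ I would take $J$ generic. Next I would apply Wendl's automatic transversality theorem to $S$ viewed as a point of the unconstrained moduli space: for an immersed closed $J$-holomorphic rational curve $u\colon S^2 \to X^4$, Wendl's criterion, combined with the adjunction formula and the hypothesis that all self-intersections of $S$ are positive, forces the linearised Cauchy--Riemann operator $D_u$ to be surjective. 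Since $\on{ind}(A) = \on{codim}(P) \geq 0$, this places $S$ at a smooth point of the unconstrained moduli space with the expected dimension.

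Then I would incorporate the tangency constraint. Satisfying $P$ at the given points amounts to the vanishing of certain jets of the normal component of $u$ at each $p_i$, which is a linear condition on $\ker D_u$ of total codimension exactly $\on{codim}(P)$. Because the tangency points lie on embedded local branches of $S$ (or can be taken to after a small perturbation of the marked points), and because the divisors $D_i$ are themselves $J$-holomorphic, the induced constraint map from $\ker D_u$ to the appropriate jet space is surjective, so $S$ is a regular point of the full constrained moduli space $\mathcal{M}_P(X,A)$, which therefore has actual dimension $0$ at $S$. Standard coherent orientations for Gromov--Witten theory together with positivity of intersections in dimension $4$ then show that the local contribution of $S$ to $\on{GW}_P(X,A)$ is $+1$, and the same automatic-transversality plus positivity argument, applied to each other element of $\mathcal{M}_P(X,A)$ for generic $J$, forces every such element to be an immersed $J$-holomorphic sphere with positive self-intersections contributing positively. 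Hence the total count is strictly positive.

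The main obstacle I expect is step three: reconciling Wendl's automatic transversality---whose Fredholm index bounds are tight---with the tangency constraint, which imposes $\on{codim}(P)$ additional high-order conditions. One has to rule out that the constraint degenerates $S$ into a stratum of reducible or multiply covered curves where the automatic-transversality index inequality fails, and one has to control orientations for the jet constraint. McDuff--Siegel handle these issues by a careful analysis of the normal jet bundles associated to the tangency condition, and reproducing that analysis would form the technical core of a full proof.
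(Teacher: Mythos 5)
The paper does not give its own proof of this lemma; it is cited verbatim as \cite[Cor.\ 2.3.9]{ms_cou_19}, with the paper only remarking that it ``is a consequence of Wendl's automatic transversality.'' Your sketch takes exactly the approach the paper (and McDuff--Siegel) have in mind, so on that level you are aligned with the source.

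That said, your two-stage structure---first prove that the unconstrained $D_u$ is surjective via automatic transversality, then argue the jet-evaluation map from $\ker D_u$ onto the constraint's jet space is surjective---is not quite how the cited result is actually organized, and the second stage is the place your argument is genuinely incomplete rather than merely terse. Surjectivity of the jet evaluation on $\ker D_u$ is not automatic just because the divisors $D_i$ are $J$-holomorphic; it is a real claim that needs the same normal-bundle jet analysis you defer to McDuff--Siegel. What McDuff--Siegel do instead is prove a \emph{constrained} automatic transversality statement directly: imposing order-$\ell$ tangency to a local $J$-holomorphic divisor twists the normal bundle by $-\ell$, and one checks that Wendl's index inequality survives this twist for immersed genus-$0$ curves in dimension $4$ with nonnegative normal Chern number. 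That one-step formulation avoids ever needing the jet-evaluation surjectivity as a separate lemma. You also correctly flag the need to control multiple covers and signs; in McDuff--Siegel this is handled because in the $4$-dimensional, genus-$0$, positive-self-intersection regime, automatic transversality produces a count that is either zero or a positive number of regularly cut out curves each contributing $+1$, and the given $S$ witnesses nonemptiness. So: right approach, matching the source, but the jet-evaluation-surjectivity step as you phrased it is a gap; the standard fix is to fold the tangency constraint into the automatic transversality index computation rather than layering it on top.
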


The second property is a certain stabilization property. Consider the tangency constraint $((k))$, which constrains a surface to pass through a local divisor at one point to order $k+1$.

\begin{lemma}[Stabilization] \label{lem:GW_stabilization} Let $X$ and $Y$ be closed, semi-positive symplectic manifolds and let $A \in H_2(X)$. Then
\[\on{GW}_{((k))}(X \times Y,A \otimes [\on{pt}]) = \on{GW}_{((k))}(X,A) \qquad\text{if}\qquad \on{ind}(A) = \on{codim}((k)) = (2n-2) + 2k\]
\end{lemma}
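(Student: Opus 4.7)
The plan is to exhibit an explicit bijection between the moduli spaces computing the two sides, obtained by forcing holomorphic representatives to split along the product. Fix compatible, regular almost complex structures $J_X$ on $X$ and $J_Y$ on $Y$ used to compute the relevant Gromov--Witten invariants, and equip $X \times Y$ with the product almost complex structure $J := J_X \oplus J_Y$. For the invariant on $X \times Y$, I would place the tangency constraint at the point $(p,q) \in X \times Y$ with local divisor $D := D_X \times Y$ (the germ of the pullback of $D_X$ under the projection $X \times Y \to X$), where $p \in D_X \subset X$ is the tangency datum used to compute $\on{GW}_{((k))}(X,A)$ and $q \in Y$ is arbitrary.

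The key step is to show that every $J$-holomorphic sphere $u = (u_X, u_Y) \colon \mathbb{CP}^1 \to X \times Y$ representing $A \otimes [\on{pt}]$ has $u_Y$ constant. Indeed, $u_Y$ is a $J_Y$-holomorphic sphere in the trivial homology class, hence of zero symplectic area, hence constant; requiring $u$ to pass through $(p,q)$ then forces $u_Y \equiv q$. Because the normal bundle of $D = D_X \times Y$ at $(p,q)$ pulls back from the normal bundle of $D_X$ at $p$, the tangency condition on $u$ at $(p,q)$ to order $k+1$ is equivalent to the tangency condition on $u_X$ at $p$ to order $k+1$. This yields a natural bijection
\[\bar{\mathcal{M}}^{J}_{((k))}(X \times Y, A \otimes [\on{pt}]) \xrightarrow{\sim} \bar{\mathcal{M}}^{J_X}_{((k))}(X, A).\]
A short index computation confirms that $\on{ind}(A) = \on{codim}((k))$ in $X$ is equivalent to $\on{ind}(A \otimes [\on{pt}]) = \on{codim}((k))$ in $X \times Y$, so both sides are virtually $0$-dimensional.

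It then remains to upgrade this set-theoretic bijection to an identity of weighted, signed counts. The linearized Cauchy--Riemann operator at $u = (u_X, q)$ splits as $D_{u_X} \oplus D_{u_Y}$, where $D_{u_Y}$ is the $\bar\partial$-operator on the trivial bundle $\mathbb{CP}^1 \times T_q Y$; its kernel is $T_q Y$ and its cokernel vanishes, and the $T_qY$ summand of the kernel is cut out exactly by the $Y$-component of the point-incidence condition at $(p,q)$. The semi-positivity hypothesis on both factors ensures that no sphere bubbling obstructs the classical transversality/perturbation framework (as in McDuff--Salamon), and the product orientation on $D_{u_X} \oplus D_{u_Y}$ yields the same sign as the orientation of $u_X$ alone. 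Summing over the bijection then gives the claimed equality.

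The main technical subtlety, and the step I would expect to carry the real work, is that the product structure $J_X \oplus J_Y$ is typically \emph{non-generic} on $X \times Y$, so one must either verify regularity directly from the product splitting of $D_u$ together with the regularity of $J_X$, or introduce a perturbation scheme that is generic on $X \times Y$ but preserves the $X$/$Y$ decomposition of curves in class $A \otimes [\on{pt}]$ (e.g.\ via domain-dependent perturbations of $J_Y$ that still force $u_Y$ to be constant on the relevant classes). Either route relies essentially on the semi-positivity assumption to rule out bubbling in the $Y$-factor that would spoil the constancy of $u_Y$.
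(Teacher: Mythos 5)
Your proposal is correct and takes essentially the same route as the paper: both pass to a product almost complex structure, observe that the $Y$-component of any sphere in class $A\otimes[\on{pt}]$ is constant, split the linearized Cauchy--Riemann operator into a factor on $X$ plus a surjective $\bar\partial$-operator on the trivial bundle $T_qY$, and use this to identify the constrained moduli spaces as oriented $0$-manifolds. The paper resolves the transversality concern you flag by the first of your two suggested routes, reading off regularity directly from the product splitting together with the genericity of $J_X$ guaranteed by semi-positivity.
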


\begin{proof} Choose a point $p \in X$ and a local divisor $D$ through $p$. Let $P$ denote the $k$th order tangency constraint $((k))$ at $p$. Since $X$ is semi-positive, we know by \cite[Prop 2.2.2]{ms_cou_19} that we can choose a compatible almost complex structure $J$ such that the moduli spaces of genus $0$ $J$-curves
\[\mathcal{M}^s(X,A) \quad\text{and}\quad \mathcal{M}_P(X,A) \subset \mathcal{M}^s(X,A) \qquad\text{are transversely cut out}\]
Here $\mathcal{M}^s$ is the moduli space of simple (somwhere injective) curves. Note that the inclusion on the right only holds when $\on{ind}(A) = \on{codim}(P)$, so that the dimension of $\mathcal{M}_P(X,A)$ is $0$. 

\vspace{3pt}

Now choose a compatible complex structure $I$ on $Y$ and consider the corresponding moduli spaces in the product $X \times Y$ with respect to the product almost complex structure $J \times I$.
\[
\mathcal{M}(X \times Y, A \otimes [\on{pt}]) \quad\text{and}\quad \mathcal{M}_P(X \times Y,A \otimes [\on{pt}]) 
\]
There is an obvious bijection of moduli spaces given by
\begin{equation} \label{eqn:stab_lemma_1}
\mathcal{M}^s(X,A) \times Y \xrightarrow{\sim} \mathcal{M}^s(X \times Y, A \otimes [\on{pt}]) \qquad\text{with}\quad ([u],y) \mapsto [u,y]\end{equation}
For any $[v] \in \mathcal{M}^s(X \times Y,A \otimes [\on{pt}])$ pullback of the tangent bunde of $X \times Y$ by $v$ splits as follows.
\[
v^*T(X \times Y) = u^*TX \oplus p^*TY \simeq u^*TX \oplus \C^m
\]
This corresponds to a splitting of the linearized operator of $[v]$ into a direct sum of the linearized operator for $u$ and the del-bar operator on the trivial bundle $u^*TY \simeq \C^m$. The latter is transverse (surjective) and (real) Fredholm index $2m$. Therefore (\ref{eqn:stab_lemma_1}) is a diffeomorphism and intertwines the Fredholm orientation lines.

\vspace{3pt}

Choosing a point $q \in Y$ and a stabilized local divisor $D \times B \subset X \times Y$ where $B$ is a ball around $q$. Then the map (\ref{eqn:stab_lemma_1}) restricts to a diffeomorphism of $P$-constrained moduli spaces
\begin{equation} \label{eqn:stab_lemma_2}
\mathcal{M}_P(X,A) \xrightarrow{\sim} \mathcal{M}_P(X \times Y, A \otimes [\on{pt}]) \quad\text{with}\quad [u] \mapsto [u,q]
\end{equation}
One may check that the virtual dimension is preserved. Indeed, if $\on{dim}(X) = 2n$ and $\on{dim}(Y) = 2m$, then
\[
\on{vdim} \mathcal{M}_P(X \times Y, A \otimes [\on{pt}]) = 2(m+n - 3) + c_1(A) - (2(m+n) - 2) - 2k\]
\[= 2(n - 3) + c_1(A) - (2n-2) - 2k = \on{vdim} \mathcal{M}_P(X, A)\]
Furthermore, the map intertwines Fredholm orientation lines and thus yields an equality of signed point counts. In other words, $\on{GW}_P(X,A) = \on{GW}_P(X \times Y, A \otimes [\on{pt}])$.
\end{proof}

\begin{remark} Note that every closed symplectic manifold of dimension less than or equal to $6$ is semi-positive. \end{remark}

\subsection{Axiomatic RSFT capacities} \label{subsec:axiomatic_capacities} Now that we have provided the reader with a basic outline of rational symplectic field theory and Gromov--Witten theory, we are now prepared to introduce the RSFT capacities. We will treat these capacities axiomatically, via the following theorem.

\begin{thm} \label{thm:main_rP_axioms} \cite{sie_hig_19} For each tangency constraint $P$, there is a corresponding rational SFT capacity
\[
\mathfrak{r}_P(X) \qquad \text{for each Liouville domain}\qquad (X,\lambda)
\]
Moreover, the capacities $\mathfrak{r}_P$ satisfy the following axioms.

\vspace{3pt}
\begin{itemize}
	\item[(a)] (Monotonicity) Let $X \to Y$ be a symplectic embedding of Liouville domains. Then
	\[\mathfrak{r}_P(X) \le \mathfrak{r}_P(Y)\]
	\item[(b)] (Sub-Additivity) Let $P = Q \sqcup R$ be the union tangency constraint of $Q$ and $R$. Then
	\[\mathfrak{r}_Q(X) \le \mathfrak{r}_P(X)\]
	\item[(c)] (Reeb Orbits) If $\lambda|_{\partial X}$ is non-degenerate, then there is a list of Reeb orbits $\Gamma$ on $\partial X$ and a (possibly disconnected) genus $0$ immersed surface $\Sigma \subset X$ bounding $\Gamma$ such that
	\[
	 \mathcal{A}(\Gamma) = \sum_{\gamma \in \Gamma} \int_\gamma \lambda \le \mathfrak{r}_P(X) \qquad\text{and}\qquad \on{ind}(\Sigma) = \on{codim}(P)
	\] 
	\item[(d)] (Gromov--Witten) Let $X \to Y$ be an embedding of $X$ into a closed symplectic manifold $(Y,\omega)$ and let $A \in H_2(Y)$ be a homology class. Assume that $H_1(\partial X) = H_2(\partial X) = H_2(X) = 0$. Then
	\[
	\on{GW}_P(Y,A) \neq 0 \qquad\text{implies}\qquad \mathfrak{r}_P(X) \le [\omega] \cdot A
	\]
\end{itemize}
\end{thm}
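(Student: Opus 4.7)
The plan is to follow Siegel's construction \cite{sie_hig_19} and define $\mathfrak{r}_P$ directly from the $P$-constrained cobordism maps of \S\ref{subsec:RSFT}. For a Liouville domain $X$ with non-degenerate contact boundary $Y = \partial X$, I view $X$ as a symplectic cobordism from $Y$ to $\emptyset$ and consider
\[
\Phi_{X,P} : BA(Y) \to BA(\emptyset)[\on{codim}(P)] \cong \Q[\on{codim}(P)].
\]
The action filtration is preserved by $\Phi_{X,P}$, so I define
\[
\mathfrak{r}_P(X) := \inf\bigl\{\mathcal{A}(\bar{\Gamma}) : x_{\bar{\Gamma}} \in BA(Y), \; \Phi_{X,P}(x_{\bar{\Gamma}}) \neq 0 \text{ in homology}\bigr\},
\]
and extend to degenerate $\lambda|_{\partial X}$ and general Liouville $X$ by $C^0$-continuity and exhaustion. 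With this definition in hand, each of the four axioms reduces to a standard formal property of the cobordism maps $\Phi_{X,P}$.

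Axioms (a) and (b) follow from composition laws. For monotonicity, an embedding $X \hookrightarrow Y$ determines a Liouville cobordism $W = Y \setminus X$ from $\partial Y$ to $\partial X$, and (\ref{eqn:constrained_cobordism_composition}) gives $\Phi_{Y,P} \simeq \Phi_{X,P} \circ \Phi_W$ up to filtered chain homotopy; since $\Phi_W$ is action-non-increasing, any orbit sequence witnessing non-vanishing on the $Y$-side produces one witnessing non-vanishing on the $X$-side of no larger action, yielding $\mathfrak{r}_P(X) \le \mathfrak{r}_P(Y)$. For sub-additivity, the factorization $\Phi_{X,P} \simeq U_R \circ \Phi_{X,Q}$ for $P = Q \sqcup R$ shows that $\Phi_{X,P}(x_{\bar{\Gamma}}) \neq 0$ forces $\Phi_{X,Q}(x_{\bar{\Gamma}}) \neq 0$, so every orbit set witnessing $\mathfrak{r}_P$ witnesses $\mathfrak{r}_Q$ at the same action. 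For the Reeb-orbit axiom (c), compactness of orbit sets of bounded action in the non-degenerate case ensures that the infimum is attained by some $\bar{\Gamma}$, and the non-vanishing of the coefficient $\Phi_{X,P}(x_{\bar{\Gamma}})$ produces at least one counted Fredholm-index-$0$ holomorphic building in $\hat{X}(m)$ asymptotic to $\bar{\Gamma}$ and satisfying the tangency constraint $P$; Lemma \ref{lem:index_relation_punctured_cobordism} then promotes its index in $X(m)$ to index $\on{codim}(P)$ in $X$, giving the required immersed genus-$0$ surface.

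The Gromov--Witten axiom (d) is the main obstacle and requires a neck-stretching argument. Given an embedding $X \hookrightarrow (Y,\omega)$ with $\on{GW}_P(Y,A) \neq 0$, I place the $P$-constrained tangency data in the interior of $X$ and stretch the neck along $\partial X$ in $Y$. Since the Gromov--Witten count is invariant under the stretching parameter, the resulting genus-$0$ $J$-holomorphic curves converge to a holomorphic building whose top-level piece lies in the completion of $X(m)$, satisfies $P$, and has positive ends on some orbit sequence $\bar{\Gamma}$ on $\partial X$, while the remaining pieces lie in the completion of $Y \setminus X$. The topological hypotheses $H_1(\partial X) = H_2(\partial X) = H_2(X) = 0$ ensure that the class $A$ splits cleanly between the two sides so that the top piece contributes non-trivially to $\Phi_{X,P}(x_{\bar{\Gamma}})$, and Stokes' theorem bounds $\mathcal{A}(\bar{\Gamma})$ by the total symplectic area $\omega \cdot A$, giving $\mathfrak{r}_P(X) \le \omega \cdot A$. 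The principal subtleties throughout---transversality for the constrained moduli spaces and the $L_\infty$-bookkeeping needed to make $\Phi_{X,P}$ and its compositions well-defined up to filtered homotopy---are handled as in \cite{sie_hig_19} using Pardon's virtual fundamental chains \cite{p2015}.
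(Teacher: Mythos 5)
The overall strategy is right, but there are two genuine slips.

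First, your definition of $\mathfrak{r}_P(X)$ minimizes over single bar-complex generators $x_{\bar\Gamma}$, while the paper (following \cite{sie_hig_19}) minimizes over arbitrary \emph{cycles} $x\in BA(\partial X)$ with $\partial x = 0$ and $\Phi_{X,P}(x)\neq 0$. These are not the same: a cycle realizing the minimum may be a nontrivial sum of generators, none of which is individually closed (so $\Phi_{X,P}$ is not even well-defined up to homotopy on them). The paper's proof of (c) handles this by expanding a minimizing cycle $x=\sum c_{\bar\Gamma}x_{\bar\Gamma}$ and using linearity of $\Phi_{X,P}$ to isolate one $\bar\Gamma$ with $\Phi_{X,P}(x_{\bar\Gamma})\neq 0$; you skip this step because your definition already asserts such a generator exists. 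You should minimize over cycles and then extract a generator.

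Second, in (b) your factorization is backwards. You write $\Phi_{X,P}\simeq U_R\circ\Phi_{X,Q}$, but $\Phi_{X,Q}$ maps into $BA(\emptyset)\cong\Q$, and there is no meaningful $U$-map on $\Q$. The correct composition law from \S\ref{subsec:RSFT} is $\Phi_{X,P}=\Phi_{X,Q\sqcup R}\simeq\Phi_{X,Q}\circ U_R$, with $U_R$ acting on $BA(\partial X)$ on the \emph{source} side. Consequently your conclusion that ``every orbit set witnessing $\mathfrak{r}_P$ witnesses $\mathfrak{r}_Q$ at the same action'' is also not quite right: the witness for $\mathfrak{r}_Q$ is $y=U_R(x)$, a different element, and the point is $\mathcal{A}(y)\le\mathcal{A}(x)$ since the $U$-map is action-non-increasing. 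This is exactly how the paper argues: $\Phi_{X,Q}(U_R(x))=\Phi_{X,P}(x)\neq 0$ and $\mathcal{A}(U_R(x))\le\mathcal{A}(x)$.

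For (a) and (d), your sketches (monotonicity via the cobordism factorization $\Phi_{Y,P}\simeq\Phi_{X,P}\circ\Phi_W$, and the Gromov--Witten bound via neck-stretching) match Siegel's approach; the paper simply cites \cite[\S 6.2.1, \S 6.2.3]{sie_hig_19} for those two rather than reproducing them.
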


\begin{proof} The capacities $\mathfrak{r}_P$ are constructed in \cite[\S 6.1]{sie_hig_19}. In our notation, the definition is
\begin{equation}\label{eqn:capacity_def} \mathfrak{r}_P(X) = \on{min}\big\{\mathcal{A}(x) \; : \; x \in BA(X) \text{ with }\partial x = 0 \text{ and } \Phi_{X,P}(x) \neq 0\big\}\end{equation}
The monotonicity and Gromov--Witten properties are proven explicitly in \cite[\S 6.2.1]{sie_hig_19} and \cite[\S 6.2.3]{sie_hig_19}, respectively. We prove (b) and (c), which are unstated in \cite{sie_hig_19}.

\vspace{5pt}

\emph{(b) - Sub-Additivity.} Property (b) is an easy application of the existence of the $U$ maps. Namely, let $x \in BA(X)$ be closed and let $y = U_R(x)$. Then
\begin{equation} \label{eqn:subadditivity_proof}
\Phi_{X,Q}(y) = \Phi_{X,Q} \circ U_R(x) = \Phi_{X,P}(x) \neq 0 \qquad\quad \mathcal{A}(y) \le \mathcal{A}(x)
\end{equation}
The sub-additivity under union of tangency constraint is now a consequence of (\ref{eqn:capacity_def}) and (\ref{eqn:subadditivity_proof}).

\vspace{5pt}

\emph{(c) - Reeb Orbits.} Let $x$ be a closed element of the bar complex $BA(X)$ that satisfies
\[\Phi_{X,P}(x) \neq 0 \qquad\quad \mathcal{A}(x) = \mathfrak{r}_P(X)\]
Expand $x$ as a finite sum of generators $x_{\bar{\Gamma}}$ of $BA(X)$, where $\bar{\Gamma} = (\Gamma_1,\dots,\Gamma_m)$ is a sequence of orbit sets satisfying (\ref{eqn:seq_of_orbit_sets}) with action bounded above by $\mathcal{A}(x)$.
\[x = \sum_{\bar{\Gamma}} c_{\bar{\Gamma}} \cdot x_{\bar{\Gamma}} \qquad\text{with}\qquad c_{\bar{\Gamma}} = 0 \text{ if }\mathcal{A}(\bar{\Gamma}) \le \mathcal{A}(x)\]
By the linearity of the constrained cobordism map $\Phi_{X,P}$, we must have that
\begin{equation} \label{eqn:constrained_map_on_x_Gamma} \Phi_{X,P}(x_{\bar{\Gamma}}) = \pi_P \circ \Phi_{X(m)}(x_{\bar{\Gamma}}) \neq 0 \qquad \text{for some} \qquad \bar{\Gamma} \text{ with }c_{\bar{\Gamma}} \neq 0\end{equation}
The map $\pi_P \circ \Phi_{X(m)}(x_{\bar{\Gamma}})$ counts disconnected genus $0$ holomorphic buildings of index $0$ with negative ends on an orbit set identified with $P$ on the union of spheres $S_1 \cup \dots \cup S_m$. Such a building must $u$ exist by (\ref{eqn:constrained_map_on_x_Gamma}). Choose a smooth genus $0$ immersion $\phi:\Sigma(m) \to X(m)$ homotopic to (the full gluing of) such a building along with a capping $\phi:\Sigma \to X$ in $X$. Let
\[A = [\Sigma(m)] \in H_2(X(m),\bar{\Gamma} \cup P) \qquad\text{and}\qquad \pi_*A = [\Sigma] \in H_2(X,\bar{\Gamma})\]
Then by the index relation in Lemma \ref{lem:index_relation_punctured_cobordism}, we see that
\[
0 = \on{ind}(A) = \on{ind}(\pi_*A) - \on{codim}(P)
\]
Thus, the surface $\Sigma$ and the orbit set $\Gamma = \cup_i \Gamma_i$ satisfy the properties in (c). \end{proof}

\section{Algebraic and toric geometry} \label{sec:algebraic_and_toric_geometry} 

In this section, we review the aspects of (toric) algebraic geometry that we will need in this paper. In particular, we discuss the class of cocharacter curves that we will use to bound the $\mfk{r}_P$.

\subsection{Interection theory} \label{subsec:intersection_theory} We will consider several cones associated to a projective variety $Y$, which live inside the homology of $Y$ when $Y$ is sufficiently nonsingular. By an \emph{algebraic cycle} of dimension $k$ on $Y$ we mean a formal $\Z$-linear combination of irreducible algebraic subvarieties of dimension $k$.

\vspace{3pt}

There is an important equivalence relation on the set of algebraic cycles of dimension $k$ on $Y$ called numerical equivalence: two cycles $Z=\sum c_iZ_i$ and $W=\sum d_iW_i$ are numerically equivalent if the equality
$$\sum c_i\on{deg}(Z_i\cap S)=\sum d_i\on{deg}(W_i\cap S)$$
holds for any irreducible subvariety $S\subset Y$ of codimension $k$. We denote by $N_k(Y)_\Z$ the abelian group of algebraic cycles of codimension $k$ considered up to numerical equivalence. We also set $N_k(Y)_\bK$ to denote the tensor product $N_\bullet(Y) \otimes_\Z \bK$ for $\bK\in\{\Q,\R\}$.

\vspace{3pt}

If $Y$ has dimension $n$ there is an intersection pairing
$$N_k(Y)_\bK\otimes N_{n-k}(Y)_\bK\to\R$$
which provides a identification $N_k(Y)_\Z\simeq N_{n-k}(Y)_\Z^\vee$ when $Y$ is smooth.

\vspace{3pt}

There are several natural cones within $N_k(Y)_\R$ associated to $Y$. The \emph{effective cone} is the cone generated by the classes of irreducible subvarieties.
\[\on{Eff}_k(Y) := \on{Cone}\big([Z] \; : \; Z \subset Y \text{ irreducible subvariety}\big) \subset N_k(Y)\]
Similarly, the \emph{big, pseudo-effective} and \emph{movable} cones of $Y$ are the interior, closure and dual cones to the effective cone, respectively.
\[\on{Big}_k(Y) := \on{int}(\on{Eff}_k(Y)) \qquad \on{\ol{Eff}}_k(Y) \qquad \on{Mov}_k(Y) := \on{Eff}_{n-k}(Y)^\vee\]
In the special case of curves (i.e.~$k=1$) we adopt the following standard notation:
\[\on{NE}(Y) := \on{Eff}_1(Y) \qquad \on{\ol{NE}}(Y) := \on{\ol{Eff}}_1(Y) \qquad\on{Nef}(Y) := \on{Eff}_1(Y)^\vee \subset N_{n-1}(Y)\]
The closure $\on{\ol{NE}}(Y)$ is called the \emph{Mori cone} and the dual $\on{Nef}(Y)$ is called the \emph{nef cone}. We add a subscript $\Z$ to denote the intersection of any of these cones with $N_1(Y)_\Z \subset N_1(Y)_\R$. 

\subsection{Polarized varieties} \label{subsec:polarized_varities} We will be particularly interested in projective varieties equipped with a distinguished divisor that plays a similar role to that of a symplectic form for a symplectic manifold.

\begin{definition} 
A \emph{pseudo-polarized variety} $(Y,A)$ is a pair consisting of a projective variety $Y$ and a big and nef $\Q$-Cartier $\R$-divisor $A$ on $Y$. We say that $(Y,A)$ is a \emph{polarized variety} if $A$ is ample.
\end{definition} 

A divisor $Z$ is $\Q$\emph{-Cartier} if it is an integer combination of irreducible divisors $Z_i$ such that $d \cdot Z_i$ is the vanishing locus of a section of a line bundle on $Y$ for some integer $d$. $Y$ is $\Q$\emph{-factorial} if every divisor is $\Q$-Cartier.

\subsection{Toric varieties} \label{subsec:toric_varieties} A \emph{toric variety} $Y$ is a projective variety admitting an action of $(\C^\times)^n$ that is free and faithful on a Zariski open set $T \subset Y$ called the \emph{big torus}. We take most of the following from \cite{cls_tor_11} and the abundant references therein.

\vspace{3pt}

The \emph{character lattice} $M$ and \emph{cocharacter lattice} $N$ are the free abelian groups
\begin{equation}
M := \on{Hom}(T,\C^\times) \qquad N := \on{Hom}(\C^\times,T)
\end{equation}
In particular, $N$ is the lattice of $1$-parameter subgroups of $T \simeq (\C^\times)^n$. We have a natural identification $N \simeq M^\vee$ and a choice of basis for either yields isomorphisms $N \simeq \Z^n\simeq M$.
\vspace{3pt}

Every (normal) toric variety $Y$ is determined by a fan $\Sigma$ in $N$: a collection of finitely generated cones $\sigma \subset N_\R:=N\otimes_\Z\R \simeq \R^n$ containing the origin and closed under intersection and taking faces. Without comment we will assume that every cone in $\Sigma$ contains a point in $N$ as is standard in toric geometry. We write $Y_\Sigma$ for the toric variety corresponding to a fan $\Sigma$. We let
\[\Sigma(r) \subset \Sigma \qquad\text{and}\qquad \Sigma(u,r) \subset \Sigma(r) \text{ for }u \in N\]
denote, respectively, the cones in $\Sigma$ of dimension $r$ and the cones in $\Sigma(r)$ contained in the minimal cone $\sigma \in \Sigma$ with $u \in \sigma$. Finally, for any $\rho \in \Sigma(1)$ we let $v_\rho$ denote the unique primitive vector in $\rho\cap N$, which we call the \emph{generator} of the ray.

\vspace{3pt}

Divisor and curve classes admit a simple description in terms of the fan. Namely, there is a short exact sequence for curves
\begin{equation}
0 \to N_1(Y) \to \Z^{\Sigma(1)} \xrightarrow{\pi} N \to 0 \quad\text{where}\quad \pi(\rho) = v_\rho
\end{equation}
There is also a dual exact sequence for divisors.
\begin{equation} \label{eqn:toric_divisor_seq}
0 \to M \to \Z^{\Sigma(1)} \to N_{n-1}(Y) \to 0
\end{equation}
which expresses that each ray $\rho \in \Sigma(1)$ corresponds to a natural $(\C^\times)^n$-invariant divisor $D_\rho \subset Y$, and that these divisors span $N_{n-1}(Y)_\Z$.

\vspace{3pt}

Define the \emph{support function} $\ph_D\colon N_\R \to [0,\infty)$ for a divisor $D=\sum_{\rho\in\Sigma(1)} a_\rho D_\rho$ on $Y$ by 
$$\ph_D(v_\rho)=a_\rho$$
and extending linearly on each cone of $\Sigma$. This procedure identifies $\on{Nef}(Y)$ with the cone of `strongly convex' functions that are piecewise linear on the fan $\Sigma$. We can also associate a (possibly empty) polytope $P(D) \subset M_\R$ to $D$ as
\[P(D) := \big\{u \in M_\R \; : \; \langle u,v_\rho\rangle \le \ph_D(v)\big\}\]
The global sections of the sheaf $\mO(D)$ correspond to lattice points in $P(D)$. We say that a polytope $\Omega$ is \emph{supported} by a fan $\Sigma$ if it arises via this construction. This is equivalent to saying that the fan $\Sigma$ refines the fan generated by the outward normal vectors of $\Omega$. Polytopes and support functions are equivalent objects for nef divisors (i.e. they determine each other) and so can be used interchangeably. Lastly, $D$ is big if and only if its polytope $P(D)$ is full-dimensional in $M_\R$. 

\vspace{3pt}

We say that a polytope $\Omega\subset M_\R$ is \emph{rational-sloped} if each facet (codimension $1$ face) has a normal in $N$. One can reverse the process described above to produce from a full-dimensional, rational-sloped $\Omega$ a pseudo-polarized toric variety $(Y_\Sigma,A_\Omega)$, where $Y_\Sigma$ is the toric variety corresponding to the fan spanned by the outward normal vectors of $\Omega$ and $A_\Omega$ is the big and nef divisor with polytope $P(A_\Omega)=\Omega$. As discussed (dually) in \cite[\S4.3]{wor_ech_19}, the support function for $A_\Omega$ is the \emph{$\Omega$-norm} $||\cdot||_\Omega^*$ defined by
$$||v||_\Omega^*:=\sup_{u\in\Omega}\langle u,v\rangle$$

\vspace{3pt}

We say that a fan $\Sigma$ in $\R^n$ is \emph{smooth} if the generators of the rays in each cone of $\Sigma$ form part of a $\Z$-basis of $\Z^n$. This corresponds to the toric variety $Y_\Sigma$ being smooth. One can `refine' any fan $\Sigma$ -- further subdividing the cones in $\Sigma$ -- to produce a smooth fan $\wt{\Sigma}$. This corresponds to a torus-equivariant resolution of singularities $\pi\colon Y_{\wt{\Sigma}}\to Y_\Sigma$.

\begin{example} \label{ex:p112} Consider the fans shown in Figure \ref{fig:p112}. The fan on the left describes the weighted projective space $\pr(1,1,2)$. This has a singularity locally isomorphic to $\C^2/\pm1$ captured by the fact that the ray generators for the top left cone are $(-1,0)$ and $(1,2)$, which do not form a $\Z$-basis of $\Z^2$. This singular toric variety can be equivariantly resolved by a single blowup in a torus-fixed point, corresponding to adding the extra ray spanned by $(0,1)$ shown on the right.

\begin{figure}[h]
\centering
\caption{The fan for $\pr(1,1,2)$ and its resolution}
\includegraphics[width=.6\textwidth]{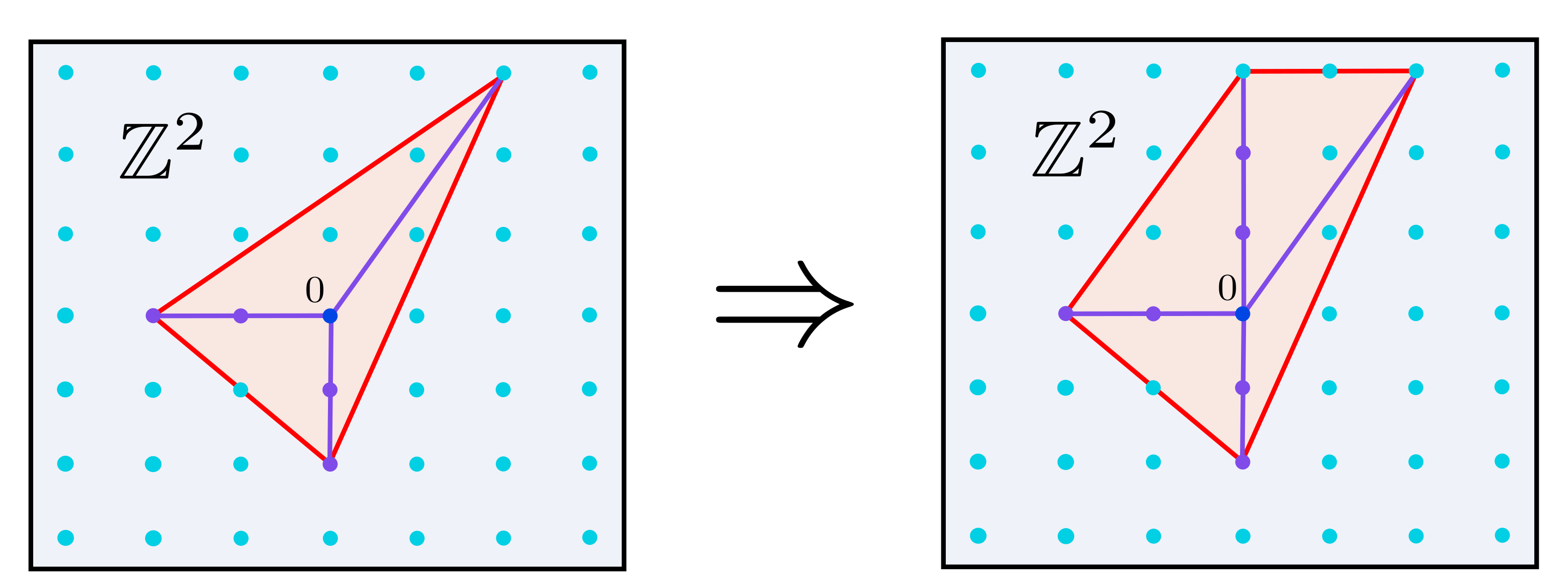}
\label{fig:p112}
\end{figure}
\end{example}

We conclude this subsection by defining the class of toric varieties we will focus on in our main results.

\begin{definition} Fix an isomorphism $N\simeq \Z^n$. A fan $\Sigma$ on $\R^n$ is \emph{strongly convex} if $\Sigma(1)$ consists entirely of rays $\rho$ of the form
\[\on{Cone}(-e_i) \qquad\text{or}\qquad \rho \subset \R_{\geq0}^n\]
We let $\Sigma^+(1) \subset \Sigma(1)$ denote the set of rays satisfying the second condition. A polytope $\Omega$ or the associated pseudo-polarized toric variety $(Y_\Sigma,A_\Omega)$ is \emph{strongly convex} if $\Omega$ is supported by a strongly convex fan.
\end{definition}

Observe that both of the fans in Example \ref{ex:p112} are strongly convex. It is straightforward to check that one can resolve a singular strongly convex toric variety to a smooth strongly convex toric variety as illustrated in that example.

\vspace{3pt}

Notice that a fan is actually only strongly convex relative to the isomorphism $N\simeq \Z^n$. This however will be immaterial for our purposes since an isomorphism $M\simeq\Z^n$ is fixed by a choice of moment map.

\subsection{Cocharacter curves} \label{subsec:cocharacter_curves} We will discuss how a cocharacter $u \in N$ determines a rational curve in $Y$. The $2$-homology classes of these curves have a rich Gromov--Witten theory and will play an important role later in the paper.

\begin{definition} The \emph{cocharacter curve} $C_u$ associated to a cocharacter $u:\C^\times \to T$ of $Y$ is the curve acquired by taking the scheme-theoretic closure of the image $u(\C^\times)$.

\vspace{3pt}

By abuse of notation, we let $C_\rho := C_{v_\rho}$ for any ray $\rho \in \Sigma(1)$. 
\end{definition}

\begin{remark} The only reference we know of that alludes to these curves is \cite[Prop 1.6]{rei_dec_83} with their appearance made more explicit in the exposition of this result in \cite[\S1]{wis_tor_02}. \end{remark}

\begin{example} When $Y=\pr^n$ the curves $C_v$ are the monomial curves; that is, they are given by the closure of a map of the form $t\mapsto(t^{a_1},\dots,t^{a_n})$ into the usual affine chart $\C^n \subset \pr^n$. For instance, the map $t\mapsto(t^2,t^4)$ gives a $2$-fold cover of a degree $2$ curve.
\end{example}

Expressing the curve class of a cocharacter is simple. We adopt the following definition.

\begin{definition} The \emph{cocharacter relation} $R_u \in \Z^{\Sigma(1)}$ is the vector defined uniquely by the relations
\begin{equation}
u = \sum_{\rho \in \Sigma(u,1)} R_{u,\rho} \cdot v_\rho \qquad\qquad -u = \sum_{\rho \in \Sigma(-u,1)} R_{u,\rho} \cdot v_\rho\end{equation}
and the property that $R_{u,\rho} = 0$ for any ray $\rho \in \Sigma(1)$ that is not in $\Sigma(u,1)\cup\Sigma(-u,1)$.
\end{definition}

If $u=v_\rho$ we set $R_u=:R_\rho$. By examining intersection numbers between $C_u$ and the torus-invariant divisors $D_\rho$ one can easily show the following.

\begin{lemma} \label{lem:cocharacter_class} The curve class $[C_u] \in N_1(Y)$ of the cocharacter curve $C_u$ is identified with the vector $R_u$ under the isomorphism $N_1(Y) \simeq \on{ker}(\Z^{\Sigma(1)} \to N)$.
\end{lemma}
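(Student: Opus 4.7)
The plan is to verify the identification by computing the intersection numbers $[C_u] \cdot D_\rho$ directly for every ray $\rho \in \Sigma(1)$: under the embedding $N_1(Y) \hookrightarrow \Z^{\Sigma(1)}$ from the short exact sequence in the excerpt, a curve class $[C]$ is identified with the vector of intersection numbers $(C \cdot D_\rho)_{\rho}$ against the torus-invariant divisors. So the lemma reduces to showing $[C_u] \cdot D_\rho = R_{u,\rho}$ for each $\rho$.

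First, observe that $u \colon \C^\times \to T$ has image entirely inside the big torus $T \subset Y$, which is disjoint from every invariant divisor $D_\rho$. Consequently $C_u$ meets $D_\rho$ only at the two boundary points $p_\pm := \lim_{t \to 0,\,\infty} u(t)$. Standard toric geometry identifies $p_+$ as a point in the torus orbit $O_{\sigma_u}$, where $\sigma_u$ is the minimal cone of $\Sigma$ containing $u$; in particular $p_+ \in D_\rho$ iff $\rho \in \Sigma(u,1)$, and analogously $p_- \in D_\rho$ iff $\rho \in \Sigma(-u,1)$.

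To compute the local intersection multiplicity at $p_+$, I would work in the affine toric chart $U_{\sigma_u} = \on{Spec}\,\C[\sigma_u^\vee \cap M]$. After passing to a smooth toric resolution if necessary, for each $\rho \in \Sigma(u,1)$ pick the element $m_\rho \in M$ dual to $v_\rho$ within $\sigma_u$; then $D_\rho$ is locally cut out by the character $\chi^{m_\rho}$. Pulling back along $u$, and using the expansion $u = \sum_{\rho' \in \Sigma(u,1)} R_{u,\rho'} v_{\rho'}$, gives
\[
\chi^{m_\rho} \circ u(t) \;=\; t^{\langle m_\rho,\, u\rangle} \;=\; t^{R_{u,\rho}},
\]
so the local multiplicity of $C_u$ with $D_\rho$ at $p_+$ is exactly $R_{u,\rho}$. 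A symmetric computation at $p_-$ using the expansion of $-u$ yields the analogous contribution from that endpoint. Summing the two produces $[C_u] \cdot D_\rho = R_{u,\rho}$ as required.

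The main subtleties are twofold. First, when $\rho \in \Sigma(u,1) \cap \Sigma(-u,1)$ both endpoints $p_\pm$ contribute nontrivially, and one must interpret $R_{u,\rho}$ as the sum of the two coefficients in order for $R_u$ to genuinely lie in $\ker(\Z^{\Sigma(1)} \to N)$; this overlap does not arise in the strongly convex setting of interest since rays are constrained to $\{-e_1,\dots,-e_n\} \cup \R_{\geq 0}^n$. Second, the dual element $m_\rho$ is only $\Z$-integral when $\sigma_u$ is smooth, so for non-smooth cones the computation should be carried out on a smooth toric resolution and transferred back via the projection formula.
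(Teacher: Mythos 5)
Your proposal follows exactly the route the paper gestures at: immediately before the lemma the paper writes ``By examining intersection numbers between $C_u$ and the torus-invariant divisors $D_\rho$ one can easily show the following,'' and that is precisely what you do --- identify $N_1(Y)\hookrightarrow\Z^{\Sigma(1)}$ via $C\mapsto(C\cdot D_\rho)_\rho$ and then compute $C_u\cdot D_\rho$ locally at the two boundary points using the dual character $\chi^{m_\rho}$ on a smooth chart. Both subtleties you flag (the overlap $\Sigma(u,1)\cap\Sigma(-u,1)\neq\emptyset$, and non-smooth cones forcing a resolution) are genuine and your fixes are the right ones.

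One further point worth flagging: $\operatorname{ord}_{t=0}(\chi^{m_\rho}\circ u)$ computes the local intersection number of the image \emph{cycle} $u_*[\pr^1]$ with $D_\rho$, not of the reduced closure $C_u$. When $u=k\cdot u'$ with $u'$ primitive and $k\ge 2$ these differ by the factor $k$; for instance with $u=(2,0)$ in $\pr^1\times\pr^1$ the reduced curve $C_u=\pr^1\times\{1\}$ meets $D_{(1,0)}=\{\infty\}\times\pr^1$ transversally once, while $R_{u,(1,0)}=2$. So the equality $[C_u]=R_u$ as written really holds for the pushforward cycle rather than for the scheme-theoretic closure that the paper uses to define $C_u$. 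Since the lemma is only invoked in the paper for $u=v_\rho$ primitive, this is harmless, but it is worth stating the primitivity hypothesis (or reinterpreting $C_u$ as $u_*[\pr^1]$) to make the statement literally correct.
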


It is also straightforward to compute intersection numbers of a cocharacter curve with any divisor by using the support function of the divisor.

\begin{lemma}[Intersection] \label{lem:cocharacter_intersection} Let $C_u$ be a cocharacter curve and let $D$ be a Cartier divisor on $Y$. Then
\begin{equation}
C_u\cdot D=\ph_D(u)+\ph_D(-u)
\end{equation}
\end{lemma}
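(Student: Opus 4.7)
The plan is that the formula is essentially immediate once one combines Lemma \ref{lem:cocharacter_class} with the piecewise-linearity of the support function. So the proof is really just a matter of unwinding definitions in the right order.

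First I would write $D = \sum_{\rho \in \Sigma(1)} a_\rho D_\rho$, so that by definition $\varphi_D(v_\rho) = a_\rho$. Under the isomorphism $N_1(Y) \simeq \ker(\Z^{\Sigma(1)} \to N)$ the intersection pairing with $D$ is simply pairing with the coefficient vector $(a_\rho)_\rho$, because the divisors $D_\rho$ are a spanning set for $N_{n-1}(Y)_\Z$ via the sequence (\ref{eqn:toric_divisor_seq}) and the intersections $D_\rho \cdot C$ can be read off from the image of $[C]$ in $\Z^{\Sigma(1)}$. Applying Lemma \ref{lem:cocharacter_class} then gives
\[
C_u \cdot D = \sum_{\rho \in \Sigma(1)} R_{u,\rho} \cdot a_\rho.
\]

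Next I would split the sum using the definition of the cocharacter relation, which says $R_{u,\rho}$ vanishes unless $\rho \in \Sigma(u,1) \cup \Sigma(-u,1)$:
\[
C_u \cdot D = \sum_{\rho \in \Sigma(u,1)} R_{u,\rho} a_\rho \; + \; \sum_{\rho \in \Sigma(-u,1)} R_{u,\rho} a_\rho.
\]
By construction, $\Sigma(u,1)$ is the set of rays of the minimal cone $\sigma$ of $\Sigma$ containing $u$, and $u = \sum_{\rho \in \Sigma(u,1)} R_{u,\rho} v_\rho$ expresses $u$ in terms of the ray generators of that cone. Since $\varphi_D$ is by definition linear on $\sigma$ with $\varphi_D(v_\rho) = a_\rho$, evaluating $\varphi_D$ on both sides yields $\varphi_D(u) = \sum_{\rho \in \Sigma(u,1)} R_{u,\rho} a_\rho$, and the analogous identity holds for $-u$. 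Summing the two recovers the claimed formula.

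I do not expect any real obstacle here; the only points requiring minor care are that (i) $\Sigma(u,1)$ and $\Sigma(-u,1)$ may overlap in degenerate cases, but the sum over $\rho \in \Sigma(1)$ is intrinsic so no double-counting issue arises; (ii) the identity should be checked first for integer Cartier $D$, and then extended to $\Q$-Cartier $\R$-divisors by $\R$-bilinearity of the intersection pairing and of the support function. Once the integer Cartier case is established, this extension is automatic, so the essential content really is the one-line computation above.
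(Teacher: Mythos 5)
Your proof is correct and follows essentially the same route as the paper: both write $D=\sum_\rho \varphi_D(v_\rho)D_\rho$, use Lemma \ref{lem:cocharacter_class} to read off $C_u\cdot D=\sum_\rho R_{u,\rho}\varphi_D(v_\rho)$, split the sum over $\Sigma(u,1)$ and $\Sigma(-u,1)$ via the definition of $R_u$, and finish with linearity of $\varphi_D$ on each cone. The paper additionally cites the orbit-cone correspondence as geometric justification for where $C_u$ meets the toric boundary, but this plays the same role as your appeal to the vanishing of $R_{u,\rho}$ off $\Sigma(u,1)\cup\Sigma(-u,1)$.
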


\begin{proof} Recall we may write the class of $D$ in terms of the torus-invariant divisors as follows.
\[D=\sum_{\rho \in \Sigma(1)} \ph_D(v_\rho)D_\rho\]
The curve $C_u$ only meets the toric divisors corresponding to rays in $\Sigma(u,1)$ and $\Sigma(-u,1)$ by the orbit-cone correspondence \cite[\S3.2]{cls_tor_11}. We now compute directly that
\[C_u\cdot D = \sum_{\rho\in\Sigma(1)} R_{u,\rho} \cdot \varphi_D(v_\rho) = \sum_{\rho \in \Sigma(u,1)} R_{u,\rho} \cdot \varphi_D(v_\rho) + \sum_{\rho \in \Sigma(-u,1)} R_{u,\rho} \cdot \varphi_D(v_\rho)\]
We then apply the linearity of $\ph_D$ on cones of $\Sigma$ to find that
\[C_u \cdot D = \ph_D\big(\sum_{\rho\in\sigma_u(1)} a_\rho v_\rho\big)+\ph_D\big(\sum_{\rho\in\sigma_{-u}(1)}b_\rho v_\rho\big) =\ph_D(u)+\ph_D(-u)\]
where $\sigma_v$ denotes the smallest cone in $\Sigma$ containing $v$.
\end{proof}

As a consequence of Lemma \ref{lem:cocharacter_intersection} we obtain the following special cases.

\begin{cor}[Area] \label{cor:cocharacter_area} Let $A$ be a big and nef divisor on $Y$ with polytope $P(A)=\Omega$. Then
\[A_\Omega\cdot C_u  = ||u||_\Omega^*+||-u||_\Omega^*\]
\end{cor}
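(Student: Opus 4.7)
The plan is to apply Lemma \ref{lem:cocharacter_intersection} directly, since the corollary is essentially just a specialization of that lemma to the divisor $A$. The only nontrivial thing to verify is that the support function $\varphi_A$ in the statement of Lemma \ref{lem:cocharacter_intersection} coincides with the $\Omega$-norm $\|\cdot\|_\Omega^*$.

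First I would set $D = A$ in Lemma \ref{lem:cocharacter_intersection}. This immediately gives
\[
A \cdot C_u \;=\; \varphi_A(u) + \varphi_A(-u).
\]
Here there is a minor subtlety: Lemma \ref{lem:cocharacter_intersection} is stated for Cartier divisors, while $A$ is merely $\Q$-Cartier. This is handled by clearing denominators, i.e.\ applying the lemma to $dA$ for a positive integer $d$ such that $dA$ is Cartier, and then dividing through by $d$; both sides of the claimed identity are $\Q$-linear in $A$.

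Next I would recall that $A = A_\Omega$ is the big and nef divisor associated to the polytope $\Omega = P(A)$, and that as noted in the discussion following (\ref{eqn:toric_divisor_seq}), the support function of $A_\Omega$ is precisely the $\Omega$-norm:
\[
\varphi_{A_\Omega}(v) \;=\; \sup_{u \in \Omega}\langle u,v\rangle \;=\; \|v\|_\Omega^*.
\]
Substituting this identification into the previous display with both $v = u$ and $v = -u$ yields
\[
A_\Omega \cdot C_u \;=\; \|u\|_\Omega^* + \|-u\|_\Omega^*,
\]
as desired. Since the corollary reduces to two essentially bookkeeping-level invocations of already-established facts, I do not anticipate any significant obstacle; the only point requiring any care is the $\Q$-Cartier to Cartier reduction mentioned above.
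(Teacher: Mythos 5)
Your proof is correct and matches the paper's (implicit) argument exactly: the corollary is stated as an immediate specialization of Lemma \ref{lem:cocharacter_intersection} using the identification, recorded in \S\ref{subsec:toric_varieties}, of the support function $\varphi_{A_\Omega}$ with the $\Omega$-norm. Your remark about clearing denominators to pass from $\Q$-Cartier to Cartier is a reasonable extra precaution, though the paper treats it as harmless bookkeeping.
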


\begin{cor}[Index] \label{cor:cocharacter_index} Let $K_Y$ be the canonical divisor of $Y$. Then
\[-K_Y \cdot C_u = \sum_{\rho\in\Sigma(1)} R_{u,\rho}\]
\end{cor}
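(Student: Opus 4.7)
The plan is to obtain this as a direct specialization of Lemma \ref{lem:cocharacter_intersection} to the anticanonical divisor. I first recall the classical toric description of the canonical class: on any normal toric variety $Y_\Sigma$ one has
\[K_Y = -\sum_{\rho \in \Sigma(1)} D_\rho,\]
so that the support function of $-K_Y$ is the piecewise linear function determined by $\varphi_{-K_Y}(v_\rho) = 1$ for every primitive ray generator $v_\rho$. This is standard (cf.\ \cite[Thm 8.2.3]{cls_tor_11}) and there is nothing to verify here.

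With this identification in hand, I apply Lemma \ref{lem:cocharacter_intersection} with $D = -K_Y$ to get
\[-K_Y \cdot C_u = \varphi_{-K_Y}(u) + \varphi_{-K_Y}(-u).\]
Now $u$ lies in the cone spanned by $\{v_\rho : \rho \in \Sigma(u,1)\}$, and by definition of the cocharacter relation one has $u = \sum_{\rho \in \Sigma(u,1)} R_{u,\rho} v_\rho$. Since $\varphi_{-K_Y}$ is linear on this cone and takes value $1$ on each of its ray generators, I obtain $\varphi_{-K_Y}(u) = \sum_{\rho \in \Sigma(u,1)} R_{u,\rho}$. The same computation applied to $-u$ gives $\varphi_{-K_Y}(-u) = \sum_{\rho \in \Sigma(-u,1)} R_{u,\rho}$.

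Summing these two identities and recalling that $R_{u,\rho} = 0$ for any ray $\rho \notin \Sigma(u,1) \cup \Sigma(-u,1)$, I conclude that $-K_Y \cdot C_u = \sum_{\rho \in \Sigma(1)} R_{u,\rho}$. There is really no obstacle to overcome; the only thing requiring a moment of care is the bookkeeping between the rays in $\Sigma(u,1)$ and those in $\Sigma(-u,1)$, so that the two contributions to $\varphi_{-K_Y}(\pm u)$ are not accidentally double-counted when they are combined into a single sum over $\Sigma(1)$.
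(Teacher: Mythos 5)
Your proposal is correct and follows exactly the route the paper intends: the corollary is stated as an immediate consequence of Lemma~\ref{lem:cocharacter_intersection} applied to $D = -K_Y = \sum_\rho D_\rho$, using that the support function $\varphi_{-K_Y}$ equals $1$ on each ray generator and is linear on cones. Your bookkeeping with $\Sigma(u,1)$, $\Sigma(-u,1)$, and the vanishing of $R_{u,\rho}$ elsewhere is the right way to assemble the two terms into a single sum over $\Sigma(1)$.
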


We will focus the main results of this paper on the strongly convex setting, coming primarily from the following result.

\begin{lemma}[Movable Curves] \label{lem:strongly_convex_movable} Let $Y$ be a smooth toric variety given by a strongly convex fan $\Sigma$. Then
\[\on{Mov}_1(Y)_\Z = \left\{\sum_{\rho\in\Sigma^+(1)}b_\rho[C_\rho] : b_\rho\in\Z_{\geq0}\right\}\]
\end{lemma}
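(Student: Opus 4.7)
The plan is to prove two inclusions, leveraging the description of cones on toric varieties via rays of the fan and the cocharacter relations.

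For the easy inclusion, that non-negative integer combinations of the $[C_\rho]$ with $\rho \in \Sigma^+(1)$ lie in $\on{Mov}_1(Y)_\Z$, I would use that on a projective toric variety every effective Weil divisor is linearly equivalent to a non-negative integer combination of the torus-invariant prime divisors $D_{\rho'}$, so $\on{Eff}_{n-1}(Y)$ is generated by $\{[D_{\rho'}]\}_{\rho' \in \Sigma(1)}$. Movability of a class then reduces to checking non-negative intersection with each $D_{\rho'}$, and Lemma \ref{lem:cocharacter_intersection} gives
\[
[C_\rho] \cdot D_{\rho'} \;=\; \varphi_{D_{\rho'}}(v_\rho) + \varphi_{D_{\rho'}}(-v_\rho).
\]
Since $\varphi_{D_{\rho'}}$ takes the value $\delta_{\rho',\rho''}$ on each generator $v_{\rho''}$ and is extended by non-negative combinations inside each cone of $\Sigma$, it is non-negative on all of $N_\R$, which proves the claim.

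For the reverse inclusion, given $C \in \on{Mov}_1(Y)_\Z$, set $b_\rho := C \cdot D_\rho \in \Z_{\geq 0}$ for $\rho \in \Sigma^+(1)$, with non-negativity coming from movability. The goal is to show $C = \sum_{\rho \in \Sigma^+(1)} b_\rho [C_\rho]$, and my strategy is to show that the $\Z$-linear map
\[
\Psi\colon N_1(Y)_\Z \to \Z^{\Sigma^+(1)}, \qquad C \mapsto (C \cdot D_\rho)_{\rho \in \Sigma^+(1)}
\]
is an isomorphism sending $[C_\rho]$ to the standard basis vector $e_\rho$. The injection $N_1(Y)_\Z \hookrightarrow \Z^{\Sigma(1)}$ from the short exact sequence for curves sends $C$ to $(C \cdot D_{\rho'})_{\rho' \in \Sigma(1)}$, and the character relation associated to $m = e_j^* \in M$, namely $D_{-e_j} \sim \sum_{\rho \in \Sigma^+(1)} v_{\rho,j} D_\rho$, shows that each intersection $C \cdot D_{-e_j}$ is determined by the components indexed by $\Sigma^+(1)$; this gives injectivity of $\Psi$. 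A rank count using the curve sequence, $\on{rank} N_1(Y)_\Z = |\Sigma(1)| - n = |\Sigma^+(1)|$ (using that the $n$ rays $\{-e_j\}$ lie in $\Sigma(1)$ by completeness), then yields surjectivity.

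The main obstacle is to verify $\Psi([C_\rho]) = e_\rho$, that is, $[C_\rho] \cdot D_{\rho'} = \delta_{\rho,\rho'}$ for $\rho,\rho' \in \Sigma^+(1)$. Using the intersection formula and $\varphi_{D_{\rho'}}(v_\rho) = \delta_{\rho,\rho'}$, this reduces to showing $\varphi_{D_{\rho'}}(-v_\rho) = 0$ for every $\rho' \in \Sigma^+(1)$, equivalently, that the minimal cone $\sigma \in \Sigma$ containing $-v_\rho$ has its rays entirely in $\{-e_1,\dots,-e_n\}$. This is the technical heart of the proof, and it is where smoothness of the strongly convex fan enters. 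Supposing to the contrary that $\sigma$ contains a ray $\rho_0 \in \Sigma^+(1)$, write $-v_\rho = \sum_i c_i v_{\rho_i}$ with $c_i > 0$ over the rays $\rho_i$ of $\sigma$. Pick any coordinate $k$ with $v_{\rho_0,k} > 0$: projecting onto this coordinate, the contributions from rays $\rho_i \in \Sigma^+(1)$ are non-negative and include the strictly positive term $c_{\rho_0} v_{\rho_0,k}$, while the only fan rays contributing negatively in coordinate $k$ are $-e_k$; since the total equals $-v_{\rho,k} \leq 0$, the ray $-e_k$ must also belong to $\sigma$. Applying this for every $k$ in the support of $v_{\rho_0}$ forces each such $-e_k$ into $\sigma$, whereupon $v_{\rho_0}$ lies in the $\Q$-span of these $-e_k$, contradicting the linear independence of the rays of $\sigma$ required by smoothness. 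Hence no such $\rho_0$ exists, completing the proof.
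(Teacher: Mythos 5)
Your proof is correct, and at its core it rests on the same combinatorial fact as the paper's: for $\rho \in \Sigma^+(1)$ the cocharacter relation $R_\rho$ is ``triangular'' --- it has a $1$ in the $\rho$-slot, nonnegative entries in the $-e_i$ slots, and zeroes in the remaining $\Sigma^+(1)$ slots --- which is equivalent to your claim that $\varphi_{D_{\rho'}}(-v_\rho)=0$ for every $\rho'\in\Sigma^+(1)$, i.e.\ that the minimal cone containing $-v_\rho$ is spanned entirely by negative coordinate rays. The packaging differs: the paper identifies $N_1(Y)_\Z$ with $\ker(\Z^{\Sigma(1)}\to N)$ and directly decomposes the movable vector $(b_\rho)$ as $\sum_{\rho\in\Sigma^+(1)}b_\rho R_\rho$, whereas you build the isomorphism $\Psi\colon N_1(Y)_\Z\xrightarrow{\sim}\Z^{\Sigma^+(1)}$ and compute $\Psi([C_\rho])=e_\rho$. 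Your version is somewhat over-engineered --- once you have injectivity of $\Psi$ and $\Psi([C_\rho])=e_\rho$, the equality $C=\sum b_\rho[C_\rho]$ follows immediately without appealing to the rank count, which in any case does not by itself give $\Z$-surjectivity. What your write-up buys is that it explicitly verifies, via the contradiction argument using simpliciality of smooth cones, the key point that the paper skims past with the phrase ``we must be able to decompose''; you also spell out why each $[C_\rho]$ is movable (nonnegativity of the support functions $\varphi_{D_{\rho'}}$ and Lemma~\ref{lem:cocharacter_intersection}), an inclusion the paper leaves implicit. One small note: the fact that all $n$ rays $-e_1,\dots,-e_n$ lie in $\Sigma(1)$, which you invoke for the rank count and which is also needed for the contradiction step to produce $-e_k\in\sigma(1)$, deserves a word of justification (it follows from completeness of $\Sigma$ together with strong convexity: a strongly convex fan omitting some $-e_j$ cannot cover the ray $\R_{\geq0}\cdot(-e_j)$).
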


\begin{proof} Let $C$ be a movable curve class corresponding to a vector $(b_\rho)_{\rho\in\Sigma(1)}\in\on{ker}(\Z^{\Sigma(1)} \to M)$. Let $\rho_1,\dots,\rho_n$ be the rays generated by $-e_1,\dots,-e_n$. We have
\[
0 = \sum_{i=1}^n b_{\rho_i} \cdot (-e_i) + \sum_{\rho \in \Sigma^+(1)} b_\rho \cdot v_\rho \qquad\text{and}\qquad b_{\rho} \ge 0 \text{ for each }\rho
\]
Since $v_\rho$ has non-negative entries for every $\rho \in \Sigma^+(1)$ we must be able to decompose this relation as
\[
0 = \sum_{\rho \in \Sigma^+(1)} b_\rho \cdot (v_\rho - \sum_{i=1}^n v_{\rho,i} \cdot e_i)=\sum_{\rho \in \Sigma^+(1)} b_\rho \cdot R_\rho
\]
and so $C=\sum_{\rho\in\Sigma^+(1)}b_\rho C_\rho$.
\end{proof}

We will also need a description of the curves in a toric surface with self-intersection zero. 

\begin{lemma}[Fiber Classes] \label{lem:fiber_classes} Let $Y$ be a smooth toric surface given by a strongly convex fan $\Sigma$ and let $F \in \on{Mov}_1(Y)$ be a movable curve class with $F \cdot F = 0$. Then
\[F = m \cdot [C_\rho] \quad\text{where}\quad \rho = \on{Cone}(e_i) \text{ and }m \ge 0 \]
\end{lemma}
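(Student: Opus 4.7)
My plan is to apply Lemma~\ref{lem:strongly_convex_movable} to write $F$ as an explicit non-negative combination of cocharacter-curve classes $[C_\rho]$, then expand the self-intersection $F \cdot F$ and show every summand is non-negative, so that $F \cdot F = 0$ collapses the sum. As a preliminary observation, a complete strongly convex fan $\Sigma$ in $\R^2$ must contain all four rays $\pm e_1, \pm e_2$ in $\Sigma(1)$, since the only rays allowed outside the positive orthant are $-e_1, -e_2$, and completeness forces $e_1, e_2$ to appear in order to span the boundary cones with $-e_2, -e_1$ respectively. By Lemma~\ref{lem:strongly_convex_movable} I can then write
\[F \;=\; \sum_{\rho \in \Sigma^+(1)} b_\rho [C_\rho], \qquad b_\rho \ge 0,\]
so that
\[F \cdot F \;=\; \sum_{\rho} b_\rho^2 [C_\rho]^2 \;+\; 2 \sum_{\rho < \rho'} b_\rho b_{\rho'} [C_\rho] \cdot [C_{\rho'}].\]

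For the cross-terms I will use that on a surface $\on{Mov}_1(Y) = \on{Eff}_1(Y)^\vee = \on{Nef}(Y)$, so each $[C_\rho]$ is nef and two nef classes on a projective surface always pair non-negatively. For the self-intersections I apply adjunction in the form $[C]^2 = 2p_a(C) - 2 - K_Y \cdot C$ for a reduced irreducible curve together with Corollary~\ref{cor:cocharacter_index}. When $\rho = \on{Cone}(e_i)$ is an axis ray, $C_{e_i}$ is the closure of the subtorus $\{(t,1)\}$ or $\{(1,t)\}$ in the big torus of $Y$; this subtorus meets no torus-fixed point, so $C_{e_i}$ is a smooth copy of $\P^1$, hence $p_a = 0$, and $-K_Y \cdot C_{e_i} = 1 + 1 = 2$ gives $[C_{e_i}]^2 = 0$. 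For any other $\rho \in \Sigma^+(1)$, $v_\rho = (a,b)$ with $a,b \ge 1$, the curve $C_\rho$ is rational (birational to $\P^1$) but possibly singular, so $p_a(C_\rho) \ge 0$ and $-K_Y \cdot C_\rho = 1 + a + b$ combine to give
\[[C_\rho]^2 \;\ge\; -2 + (1 + a + b) \;=\; a + b - 1 \;\ge\; 1.\]
Finally, $[C_{e_1}] \cdot [C_{e_2}] = 1$ because $C_{e_1}$ and $C_{e_2}$ are the concrete closures of $\{(t,1)\}$ and $\{(1,t)\}$, meeting transversally at the single big-torus point $(1,1)$; this point is untouched by the iterated toric blowup $Y \to \P^1 \times \P^1$ (all blowup centers being torus-fixed), so the local intersection number is unchanged.

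With these inputs every summand in the expansion of $F \cdot F$ is non-negative, and $F \cdot F = 0$ forces each to vanish. From $b_\rho^2 [C_\rho]^2 = 0$ and $[C_\rho]^2 \ge 1$ for every non-axis ray $\rho$, I conclude $b_\rho = 0$ outside $\{\on{Cone}(e_1), \on{Cone}(e_2)\}$; the remaining cross term $2 b_{\on{Cone}(e_1)} b_{\on{Cone}(e_2)} \cdot 1 = 0$ then forces one of the two surviving coefficients to vanish, giving $F = m \cdot [C_{\on{Cone}(e_i)}]$ for some $i \in \{1,2\}$ and $m \ge 0$. The main subtlety I anticipate is verifying the two key intersection identities, namely $[C_{e_i}]^2 = 0$ and $[C_{e_1}] \cdot [C_{e_2}] = 1$, in an arbitrary strongly convex $Y$; both reduce to the fact that $Y$ is an iterated blowup of $\P^1 \times \P^1$ at torus-fixed points, so the big-torus geometry that controls these curves is preserved.
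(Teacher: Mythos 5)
Your approach is essentially the same as the paper's: reduce via Lemma~\ref{lem:strongly_convex_movable} to a non-negative combination of cocharacter classes, note that nef classes pair non-negatively, and then pin down which $[C_\rho]$ have self-intersection zero. The one genuine methodological difference is in the key step $[C_\rho]^2 > 0$ for non-axis rays: the paper argues geometrically (such a $C_\rho$ passes through the torus-fixed point for $\on{Cone}(-e_1,-e_2)$ and is not torus-invariant, so perturbing by the torus action gives another curve through that point and hence $C_\rho^2 > 0$), whereas you use adjunction $[C_\rho]^2 = 2p_a - 2 - K_Y\cdot C_\rho$ together with Corollary~\ref{cor:cocharacter_index}. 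Both are valid; adjunction is arguably cleaner and also handles the axis case $[C_{e_i}]^2 = 0$ uniformly. You also make the cross-term positivity (nef pairing and $[C_{e_1}]\cdot[C_{e_2}]=1$) fully explicit where the paper leaves it implicit behind the identification $[C_{e_i}] = [D_{-e_{3-i}}]$.

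However, your ``preliminary observation'' that a complete strongly convex smooth fan must contain all four rays $\pm e_1, \pm e_2$ is false. Smoothness of $\on{Cone}(-e_2, v)$ only forces $v = (1,b)$ for some $b \ge 0$, not $b = 0$. For instance, the fan with rays $-e_1, -e_2, (1,1)$ is smooth, complete and strongly convex (it is the $\P^2$ fan after a unimodular change of basis), and neither $e_1$ nor $e_2$ is a ray; similarly $-e_1, -e_2, (1,1), e_2$ gives a Hirzebruch surface without $e_1 \in \Sigma(1)$. This also means ``$Y$ is an iterated blowup of $\P^1\times\P^1$'' is not automatic and only holds when both axis rays are present. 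Fortunately this does not break your argument: when $e_i \notin \Sigma(1)$, the corresponding $C_{e_i}$ simply does not appear among the generators supplied by Lemma~\ref{lem:strongly_convex_movable}, your adjunction estimate still gives $[C_\rho]^2 \ge 1$ for every remaining generator, and the cross-term step involving $[C_{e_1}]\cdot[C_{e_2}]$ is only needed when both axis rays exist (in which case $Y$ really is an iterated torus-fixed blowup of $\P^1\times\P^1$ and your computation of that intersection number is correct, since the boundary points $0_{e_i},\infty_{e_i}$ lie in distinct one-dimensional orbits away from the blowup centers). So the proof is correct once that stray claim is removed or replaced by a case split on whether $e_i$ is a ray.
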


\begin{proof} By Lemma \ref{lem:strongly_convex_movable}, the cocharacter curves $C_\rho$ generate the movable cone. It thus suffices to show that $C_\rho^2=0$ if and only if $v_\rho=e_1$ or $v_\rho=e_2$. By direct computation, if $\rho = \on{Cone}(e_1)$ or $\rho = \on{Cone}(e_2)$ are in $\Sigma$, then the corresponding cocharacter curve $C_\rho$ has zero self-intersection. In particular, $C_\rho$ is the divisor $D_\sigma$ corresponding to the rays spanned by $\sigma = -e_2$ (in the first case) or $\sigma = -e_1$ (in the second case). 

\vspace{3pt}

On the other hand, suppose that $v_\rho$ has positive entries, and so is not a multiple of $e_i$ for $i = 1,2$. Then note that the torus-fixed point $p_0$ corresponding to $\on{Cone}(-e_1,-e_2)$ is in $C_\rho$ but that $C_\rho$ is not torus-invariant. Perturbing $C_\rho$ using the torus-action preserves $p_0$, and so $C_\rho^2>0$.
\end{proof}

\section{Algebraic bounds} \label{sec:algebraic_bounds} In this section, we introduce lower bounds $\mathfrak{u}_k$ and upper bounds $\mathfrak{l}_k$ for the rational SFT capacities. We also establish several important properties, e.g. asymptotic behavior with $k$. 

\vspace{3pt}

We present four different formulations of these bounds: an \emph{algebraic} formulation (\S \ref{subsec:algebraic_formulation}), a \emph{fan} formulation (\S \ref{subsec:fan_formulation}), a \emph{lattice} formulation (\S \ref{subsec:lattice_formulation}) and a \emph{polytope} formulation (\S \ref{subsec:polytope_formulation}). Each formulation is useful in different circumstances and all agree on their common domains of definition. 

\subsection{Algebraic formulation} \label{subsec:algebraic_formulation} The first formulation of our bounds is applicable to any $\Q$-factorial projective surface $Y$ equipped with a big and nef $\R$-divisor $A$.

\vspace{3pt}

Consider the movable cone $\on{Mov}_1(Y)$, and let $\on{Mov}_1(Y)^+ \subset \on{Mov}_1(Y)$ denote the subset
\begin{equation}
\on{Mov}_1(Y)^+ :=\{C\in\on{Mov}_1(Y):C^2 >0\}\cup\{F:\text{$F$ is irreducible and $F\cdot F=0$}\}
\end{equation}
Observe that the curves $F$ in the righthand set are automatically movable.

\begin{definition}[Algebraic bounds] Let $(Y,A)$ be a (possibly singular) $\Q$-factorial pseudo-polarized surface. We adopt the definition
\begin{equation}
\lk_k(Y,A):=\inf\{\pi^*A \cdot C \; : \; C \in\on{Mov}_1(\wt{Y})_\Z \text{ and }-K_Y\cdot C\geq k+1\} \end{equation}
\begin{equation}\uk_k(Y,A):=\inf\{\pi^*A\cdot C\; : \; C \in\on{Mov}_1(\wt{Y})^+_\Z \text{ and }-K_Y\cdot C\geq k+1\}
\end{equation}
Here $\pi\colon\wt{Y}\to Y$ is any resolution of singularities. If $Y$ is smooth one can just take $\pi$ to be the identity map.
\end{definition}

The following result shows that the definition for singular surfaces is independent of resolution; compare to \cite[Prop 3.4]{wor_alg_20}.

\begin{prop}[Blowup Formula] \label{prop:blowup} Let $(Y,A)$ be a pseudo-polarized smooth surface, and let $\pi\colon\wt{Y}\to Y$ be a blowup with exceptional divisor $E$. Fix $\eps\geq0$ such that $\wt{A}_\eps=\pi^*A-\eps E$ is big and nef. Then
\[\mfk{l}_k(Y,A)\geq\mfk{l}_k(\wt{Y},\wt{A}_\eps)\quad \text{and}\quad \mfk{l}_k(Y,A)=\mfk{l}_k(\wt{Y},\pi^*A)\]
\[\mfk{u}_k(Y,A)\geq\mfk{u}_k(\wt{Y},\wt{A}_\eps)\quad \text{and}\quad \mfk{u}_k(Y,A)=\mfk{u}_k(\wt{Y},\pi^*A)\]
\end{prop}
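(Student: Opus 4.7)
The plan is to compare the two optimization problems via pullback and pushforward of curve classes under $\pi$. Each equality reduces to a pair of inequalities: one proved by pulling back competitors from $Y$ to $\wt Y$, the other by pushing them forward. The inequalities involving $\wt A_\eps$ for $\eps > 0$ drop out of the pullback half alone, since $E$ is orthogonal to any pullback class. The basic tools throughout are the blowup identities $K_{\wt Y}=\pi^*K_Y+E$, $\pi^*A\cdot E=0$, $E^2=-1$, and the projection formula $\pi^*\alpha\cdot\beta=\alpha\cdot\pi_*\beta$.

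For the pullback step, given a competitor $C\in\on{Mov}_1(Y)_\Z$ with $-K_Y\cdot C\ge k+1$, I set $\wt C:=\pi^*C$. Movability of $\wt C$ is immediate from the projection formula: for every effective divisor $\wt D$ on $\wt Y$, $\pi^*C\cdot\wt D=C\cdot\pi_*\wt D\ge 0$. Direct computation then gives $\wt A_\eps\cdot\wt C=\pi^*A\cdot\pi^*C=A\cdot C$ (the $E$-correction vanishes) and $-K_{\wt Y}\cdot\wt C=-K_Y\cdot C\ge k+1$, so $\wt C$ is a valid competitor with the same cost. This already yields $\mfk l_k(\wt Y,\wt A_\eps)\le\mfk l_k(Y,A)$ for every $\eps\ge 0$, and the analogue for $\mfk u_k$ modulo the positivity condition handled below.

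For the pushforward step, needed to complete the $\eps=0$ equalities, I take a competitor $\wt C$ on $\wt Y$ and set $C:=\pi_*\wt C$. Then $C$ is movable since $\pi^*$ sends effective divisors to effective ones, $A\cdot C=\pi^*A\cdot\wt C$ by projection, and writing $-K_Y\cdot C=-K_{\wt Y}\cdot\wt C+E\cdot\wt C$ yields the index bound using $E\cdot\wt C\ge 0$. The case $C=0$ is excluded because a movable class of the form $\wt C=aE$ would satisfy both $-a=\wt C\cdot E\ge 0$ and $-K_{\wt Y}\cdot\wt C=a\le 0$, contradicting $\ge k+1$. Combining the two directions at $\eps=0$ gives $\mfk l_k(Y,A)=\mfk l_k(\wt Y,\pi^*A)$, and likewise for $\mfk u_k$.

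The main obstacle I anticipate is preserving the $\on{Mov}_1^+$ positivity condition in the $\mfk u_k$ statements. Decomposing $\wt C=\pi^*(\pi_*\wt C)+bE$ in $N_1(\wt Y)$ gives the convenient identity $(\pi_*\wt C)^2=\wt C^2+b^2$, which handles pushforward cleanly: bigness is preserved, and an irreducible $\wt C\neq E$ with $\wt C^2=0$ forces $b=0$, so $\pi_*\wt C$ is the birational image of an irreducible self-intersection zero curve. Pullback is subtler when $C$ is irreducible with $C^2=0$ and passes through the blowup point with multiplicity $m>0$: then $\pi^*C$ decomposes as the strict transform plus $mE$ and is reducible, so is not literally in $\on{Mov}_1(\wt Y)^+$. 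I would resolve this by replacing $C$ with a generic member $C'\in|C|$ missing the blowup point, whose existence I would extract from a Riemann--Roch estimate using the hypothesis $-K_Y\cdot C\ge k+1\ge 2$ (forcing $h^0(\mathcal O(C))\ge 2$); the strict transform $\wt{C'}$ is then an irreducible representative of $\pi^*[C]$ with self-intersection zero, which lies in $\on{Mov}_1(\wt Y)^+$ and completes the argument.
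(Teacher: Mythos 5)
Your route is the same as the paper's: compare the two optimizations by pulling back competitors from $Y$ to $\wt Y$ and pushing them forward, using $K_{\wt Y}=\pi^*K_Y+E$, $\pi^*A\cdot E=0$, $E^2=-1$ and the projection formula. Your explicit treatment of the pushforward direction for $\mfk u_k$ via the identity $(\pi_*\wt C)^2=\wt C^2+b^2$ (where $\wt C=\pi^*(\pi_*\wt C)+bE$) is correct and elaborates what the paper leaves implicit. The one spot where the paper is terse is exactly where you focused: for the pullback of an irreducible class $[F]$ with $F^2=0$, the paper simply says ``we can assume the centre of the blowup is not on $F$.'' Your instinct to supply a justification is right, but the Riemann--Roch argument as stated has a gap.

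Here is the issue. With $-K_Y\cdot C\geq 2$ and $C^2=0$ you do get $h^2(\mathcal O_Y(C))=0$, since $(K_Y-C)\cdot C = K_Y\cdot C < 0$ rules out effectivity of $K_Y-C$, and Riemann--Roch then gives $h^0(\mathcal O_Y(C)) = h^1(\mathcal O_Y(C)) + \chi(\mathcal O_Y) + \tfrac12(-K_Y\cdot C) \geq h^1(\mathcal O_Y(C)) + \chi(\mathcal O_Y) + 1$. This forces $h^0\geq 2$ only when $\chi(\mathcal O_Y)\geq 1$; for a surface of positive irregularity with $p_g=0$ (e.g.\ a ruled surface over a curve of positive genus), $\chi(\mathcal O_Y)\leq 0$ and the estimate does not close. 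Moreover even $h^0\geq 2$ by itself does not immediately yield an irreducible member of $|C|$ avoiding a prescribed point. The robust argument is deformation-theoretic: by adjunction, $2p_a(F)-2 = F^2 + K_Y\cdot F \leq -1$, so $F\cong\pr^1$; hence $N_{F/Y}\cong\mathcal O_{\pr^1}$ and the Hilbert scheme is smooth of dimension one at $[F]$. The members of this one-parameter family are generically irreducible, all numerically equivalent to $F$, and since $F^2=0$ two general members are disjoint, so at most one passes through the blowup centre. Substituting that argument for the Riemann--Roch step makes the proof complete, and then it matches the paper's reasoning with the gap that the paper itself glosses over actually filled.
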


\begin{proof} We provide the proof for $\lk_k$ and then explain the modifications for $\uk_k$. Suppose $C$ realises $\mfk{l}_k(Y,A)$. Then $\pi^*C$ is movable on $\wt{Y}$ and $-K_{\wt{Y}}\cdot C=-K_Y\cdot C\geq k+1$ giving
\[\mfk{l}_k(\wt{Y},\wt{A}_\eps)\leq\wt{A}_\eps\cdot\pi^*C=A\cdot C=\mfk{l}_k(Y,A)\]
Now suppose $\wt{C}=\pi^*C-mE$ realises $\mfk{l}_k(\wt{Y},\pi^*A)$. Note $m\geq0$ to ensure $E\cdot\wt{C}\geq0$. We see
\[k+1\leq-K_{\wt{Y}}\cdot(\pi^*C-mE)=-K_Y\cdot C-m\leq-K_Y\cdot C\]
\[\text{and hence} \qquad \mfk{l}_k(Y,A)\leq A\cdot C=\pi^*A\cdot\wt{C}=\mfk{l}_k(\wt{Y},\pi^*A)\]
The converse inequality is obtained by setting $\eps=0$ in the first argument of the proof, and this proves the result for $\lk_k$. 

\vspace{3pt}

We proceed similarly for $\uk_k(Y,A)$. The only additional complexity is the exclusion of the fiber classes. If $C^2>0$ then clearly $\pi^*C\in\on{Mov}(\wt{Y})_\Z^+$. The remaining case is when $C=[F]$ for an irreducible $0$-curve $F\subset Y$. We can assume that the centre of the blowup is not on $F$ and so $\pi^*F$ is also an irreducible curve of self-intersection zero and hence in $\on{Mov}(\wt{Y})_\Z^+$, giving the result.
\end{proof}

This algebraic formulation is useful for importing techniques from birational geometry, such as in studying the asymptotics of RSFT capacities as we will see in \S\ref{subsec:asymptotics}.

\subsection{Fan formulation} \label{subsec:fan_formulation} The second formulation of our bounds applies to any pseudo-polarized toric surface $(Y_\Sigma,A_\Omega)$, and is given in terms of the corresponding (outer normal) fan $\Sigma$.

\vspace{3pt}

We start by defining subsets $L(\Xi,k)$ and $U(\Xi,k)$ of the lattice $\Z^{\Xi(1)}$ for any fan $\Xi$. Namely, let $L(\Xi,k) \subset \Z^{\Xi(1)}$ be the set of vectors $a \in \Z^{\Xi(1)}$ that satisfy
\begin{equation} \label{eqn:U_k_conditions}
\sum_{\rho \in\Xi(1)} a_\rho v_\rho = 0 \qquad \sum_\rho a_\rho \ge k + 1 \quad\text{and}\quad a_\rho \ge 0 \qquad \text{for each }\rho \in \Xi(1) \end{equation}
Let $F(\Xi) \subset \Z^{\Xi(1)}$ be the set consisting of vectors $a$ such that there is a $j \in \{1,2\}$ with
\[a_\rho=m>0\text{ if $v_\rho=\pm e_j$} \qquad  \text{and}\qquad a_\rho=0\text{ else}
\]
Here $e_1$ and $e_2$ are the standard basis vectors of $\Z^2$. Finally, we define $U(\Xi,k) \subset \Z^{\Xi(1)}$ by
\begin{equation}
U(\Xi,k) = L(\Xi,k) \setminus F(\Xi)
\end{equation}

\begin{lemma}[Fan formula] \label{lem:fan_formula} Let $\Omega$ be a convex rational-sloped polytope supported by the fan $\Sigma$. Then
\[\lk_k(Y_\Sigma,A_\Omega) = \inf_{(a_\rho) \in L(\widetilde{\Sigma},k)}\Big\{ \sum_\rho a_\rho ||v_\rho||_\Omega^*\Big\} \qquad\quad \uk_k(Y_\Sigma,A_\Omega) = \inf_{(a_\rho) \in U(\widetilde{\Sigma},k)}\Big\{ \sum_\rho a_\rho ||v_\rho||_\Omega^*\Big\} \]
Here $\widetilde{\Sigma}$ is the refinement of $\Sigma$ corresponding to a toric resolution $\widetilde{Y}_\Sigma$ of $Y_\Sigma$.
\end{lemma}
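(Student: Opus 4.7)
The plan is to first use Proposition \ref{prop:blowup} to reduce to the case where $\Sigma = \widetilde{\Sigma}$ is smooth, then translate each ingredient of the algebraic definition into the lattice language via the standard toric intersection-theoretic dictionary. Iterating the identity $\mathfrak{l}_k(Y, A) = \mathfrak{l}_k(\widetilde{Y}, \pi^* A)$ (and its analog for $\mathfrak{u}_k$) over the toric blowups composing $\pi\colon Y_{\widetilde{\Sigma}} \to Y_\Sigma$ replaces $(Y_\Sigma, A_\Omega)$ with $(Y_{\widetilde{\Sigma}}, \pi^* A_\Omega)$ on the algebraic side, while the right-hand side of each formula is already expressed in terms of $\widetilde{\Sigma}$. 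So it suffices to prove the identities assuming $\Sigma$ itself is smooth.

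Under the standard isomorphism $N_1(Y_\Sigma)_\Z \simeq \ker(\Z^{\Sigma(1)} \to N)$ sending $C \mapsto (C \cdot D_\rho)_\rho$, the statement that this intersection vector lies in the kernel is exactly the balance condition $\sum_\rho a_\rho v_\rho = 0$ from $L(\Sigma, k)$. The movable curve classes correspond precisely to the non-negative vectors: on a smooth projective surface, movability for curves is dual to effectivity of divisors, and the effective cone of divisors on a smooth toric surface is generated by the torus-invariant prime divisors $D_\rho$. Using $-K_{Y_\Sigma} = \sum_\rho D_\rho$ translates $-K_{Y_\Sigma} \cdot C \geq k+1$ into $\sum_\rho a_\rho \geq k+1$, and the description of $A_\Omega$ by its support function (the $\Omega$-norm) gives $A_\Omega = \sum_\rho \|v_\rho\|_\Omega^* D_\rho$ and hence $A_\Omega \cdot C = \sum_\rho a_\rho \|v_\rho\|_\Omega^*$. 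Chaining these identifications yields the claimed formula for $\mathfrak{l}_k$.

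For $\mathfrak{u}_k$ it remains to check that the vectors in $F(\widetilde{\Sigma})$ correspond precisely to the movable classes excluded from $\on{Mov}_1(Y_\Sigma)^+_\Z$ after reduction to the smooth case. Such an excluded class $C$ has $C^2 = 0$ and is not irreducible, which (after decomposing into irreducible effective components constrained by $C^2 = 0$) forces $C = m \cdot F$ with $m \geq 2$ and $F$ an irreducible movable curve of self-intersection zero. By Lemma \ref{lem:fiber_classes} such $F$ must be the cocharacter curve $[C_\rho]$ of a ray $\rho$ generated by $\pm e_j$; under the dictionary the class $m F$ is precisely the vector with $a_\rho = m$ on the rays $v_\rho = \pm e_j$ and zero elsewhere, which is the defining condition of $F(\Sigma)$. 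The main obstacle I anticipate is precisely this last matching step, since it depends on the classification of the self-intersection-zero curves and is the point most sensitive to hypotheses on the ambient fan, but granting Lemma \ref{lem:fiber_classes} it proceeds immediately.
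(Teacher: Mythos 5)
Your argument is essentially the paper's: identify movable curve classes in the resolution $\widetilde{Y}_\Sigma$ with non-negative vectors in $\ker(\Z^{\widetilde{\Sigma}(1)}\to N)$, translate index and area via $-K=\sum_\rho D_\rho$ and $A_\Omega = \sum_\rho\|v_\rho\|^*_\Omega D_\rho$, and invoke Lemma~\ref{lem:fiber_classes} to match $F(\widetilde{\Sigma})$ with the excluded zero-self-intersection classes. Two small remarks. First, the preliminary reduction via Proposition~\ref{prop:blowup} is superfluous (and not quite literally available, since that proposition is stated for smooth $Y$): the algebraic definition of $\lk_k(Y,A)$ and $\uk_k(Y,A)$ for a singular surface is already an infimum over $\on{Mov}_1(\widetilde{Y})$, so both sides of the fan formula live on $\widetilde{\Sigma}$ from the start and no intermediate blowup argument is needed. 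Second, you correctly take the classes excluded from $\on{Mov}_1^+$ to be $m\cdot F$ with $m\geq 2$; note, though, that as literally written the set $F(\widetilde{\Sigma})$ also contains the $m=1$ vector, which corresponds to the irreducible curve $[C_\rho]$ retained in $\on{Mov}_1^+$ — so for the $\uk_k$ identity to hold the definition of $F$ must be read with $m\geq 2$, which is evidently the intent since the paper's own proof asserts that $F(\widetilde{\Sigma})\cap\on{Mov}_1(\widetilde{Y}_\Sigma)$ represents precisely the \emph{reducible} zero-curves.
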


\begin{proof} Recall that a curve $C$ in $\on{Mov}_1(\wt{Y}_\Sigma)$ is identified with an element $a \in\Z^{\wt{\Sigma}(1)}$ in the kernel of the natural map $\Z^{\wt{\Sigma}(1)} \to N$ with $a_\rho \ge 0$ for each $\rho$. Moreover, by Corollary \ref{cor:cocharacter_area} and Corollary \ref{cor:cocharacter_index} the area and index are given by
\[
A_\Omega \cdot [C] = \sum_\rho a_\rho \cdot \|v_\rho\|_\Omega^* \qquad\text{and}\qquad -K_\Sigma \cdot C = \sum_\rho a_\rho 
\]
Thus the desired formula for $\mfk{l}_k$ follows from the definition of $L(\widetilde{\Sigma},k)$. By Lemma \ref{lem:fiber_classes} the reducible self-intersection zero curves in $\on{Mov}_1(Y_\Sigma)$ are precisely represented by the subset $F(\wt{\Sigma}) \cap \on{Mov}_1(\wt{Y}_\Sigma) \subset \Z^{\wt{\Sigma}(1)}$. Therefore
\[
\mathfrak{u}_k(Y_\Sigma,A_\Omega)=\inf_{(a_\rho)\in U(\widetilde{\Sigma},k)}\Big\{\sum_\rho a_\rho \cdot \|v_\rho\|_\Omega^* \Big\}
\]
This concludes the proof. \end{proof}

\subsection{Lattice formulation} \label{subsec:lattice_formulation} The third lattice-based formulation of our bounds is defined for any convex toric domain in $\C^2$. This version was introduced already in the introduction (see \S \ref{subsec:main_results}).

\vspace{3pt}

We begin (as in \S \ref{subsec:fan_formulation}) by defining distinguished sets of sequences in $\Z^2$, denoted $L(k)$ and $U(k)$. Namely, let $L(k)$ denote the set of sequences
\begin{equation}
\bar{v} = v_1,\dots,v_m \in \Z^2 \setminus 0 \qquad\text{with}\qquad m \ge k+1 \text{ and }\sum_i v_i = 0
\end{equation}
Likewise, let $U(k) \subset L(k)$ denote the subset of sequences $\bar{v} = v_1,\dots,v_m$ that satisfy
\[
m = 2 \text{ and } v_1 = -v_2 = e_j \text{ for }j = 1,2 \qquad\text{or}\qquad \text{ for each $j = 1,2$, there is an $i$ with }v_i \not\in \text{span}(e_j)
\]
The lattice version of our bounding quantities can now be defined by the following optimizations.

\begin{definition} \label{def:lattice_def_lk_uk} Let $\Omega \subset \R^n$ be a convex domain containing $0$. We define
\begin{equation}\lk_k(\Omega) := \inf_{\bar{v} \in L(k)}\Big\{ \sum_{i=1}^{m} ||v_i||_\Omega^* \Big\} \qquad \uk_k(\Omega) := \inf_{\bar{v} \in U(k)}\Big\{ \sum_{i=1}^{m} ||v_i||_\Omega^* \Big\}\end{equation}
\end{definition}

The lattice versions of $\mathfrak{l}_k$ and $\mathfrak{u}_k$ have several useful features similar to capacities.

\begin{lemma}[Properties] \label{lem:lk_uk_properties} The quantities $\mathfrak{l}_k(-)$ and $\mathfrak{u}_k(-)$ satisfy the following properties.
\vspace{3pt}
\begin{itemize}
\item[(a)] (Monotonicity) If $\Omega \subset \Delta$ is an inclusion of convex domains then
\[\mathfrak{l}_k(\Omega) \le \mathfrak{l}_k(\Delta) \qquad\text{and}\qquad \mathfrak{u}_k(\Omega) \le \mathfrak{u}_k(\Delta)\]
\item[(b)] (Scaling) If $c \cdot \Omega$ is the scaling of $\Omega$ by some constant $c > 0$ then
\[\mathfrak{l}_k(c \cdot \Omega) = c \cdot \mathfrak{l}_k(\Delta) \qquad\text{and}\qquad \mathfrak{u}_k(c \cdot \Omega) = c \cdot \mathfrak{u}_k(\Omega)\]
\item[(c)] (Continuity) $\mathfrak{l}_k(\Omega)$ and $\mathfrak{u}_k(\Omega)$ are continuous in the Hausdorff metric on convex sets.
\end{itemize}
\end{lemma}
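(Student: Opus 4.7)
The plan is to verify the three properties in turn, where (a) and (b) are immediate consequences of the definition of the support function $\|v\|_\Omega^* = \sup_{u \in \Omega}\langle u,v\rangle$, and (c) requires a short compactness argument leveraging that $\Omega$ contains a neighborhood of $0$.

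For \textbf{(a) Monotonicity}, I would observe that $\Omega \subset \Delta$ implies $\|v\|_\Omega^* \leq \|v\|_\Delta^*$ pointwise in $v$, hence $\sum_i \|v_i\|_\Omega^* \leq \sum_i \|v_i\|_\Delta^*$ for every finite sequence $\bar{v}$. Since the sets $L(k)$ and $U(k)$ depend only on the ambient lattice $\Z^n$ and not on $\Omega$, taking the infimum over $L(k)$ (respectively $U(k)$) on both sides yields the claim. For \textbf{(b) Scaling}, the support function scales as $\|v\|_{c\Omega}^* = c\|v\|_\Omega^*$, and again the feasible sets $L(k)$ and $U(k)$ are independent of $\Omega$, so the constant $c$ factors out of the infimum.

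For \textbf{(c) Continuity}, the key ingredient is that $\Omega \mapsto \|v\|_\Omega^*$ is continuous (in fact $1$-Lipschitz) in $\Omega$ under the Hausdorff metric, for each fixed $v$. Fix $\Omega$ and suppose $\Omega_n \to \Omega$ in Hausdorff metric. Since $\Omega$ contains a ball $B(r)$ about $0$, for all sufficiently large $n$ we may assume $\Omega_n \supset B(r/2)$, so that
\begin{equation}
\|v\|_{\Omega_n}^* \;\geq\; \tfrac{r}{2}\,|v| \qquad \text{for all } v \in \Z^n,
\end{equation}
uniformly in $n$. Consequently, any sequence $\bar{v} = (v_1,\dots,v_m)$ of non-zero integer vectors satisfying $\sum_i \|v_i\|_{\Omega_n}^* \leq M$ must have $m \leq 2M/r$ and $|v_i| \leq 2M/r$, so the near-minimizing sequences are drawn from a common finite set independent of $n$.

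Upper semi-continuity is then straightforward: for any fixed $\bar{v} \in L(k)$, continuity of the support function gives $\sum_i \|v_i\|_{\Omega_n}^* \to \sum_i \|v_i\|_\Omega^*$, so $\limsup_n \mathfrak{l}_k(\Omega_n) \leq \sum_i \|v_i\|_\Omega^*$, and taking infimum over $\bar{v}$ yields $\limsup_n \mathfrak{l}_k(\Omega_n) \leq \mathfrak{l}_k(\Omega)$. For lower semi-continuity, pick near-optimizers $\bar{v}^{(n)} \in L(k)$ for $\Omega_n$; the uniform compactness above allows us to pass to a subsequence where $\bar{v}^{(n)} = \bar{v}$ is constant, whence $\mathfrak{l}_k(\Omega_n) \to \sum_i \|v_i\|_\Omega^* \geq \mathfrak{l}_k(\Omega)$. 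The argument for $\mathfrak{u}_k$ is identical since $U(k) \subset L(k)$ is a fixed subset preserved under taking constant subsequences. The main (only) conceptual obstacle is the lower semi-continuity, which requires precisely the compactness argument above and hence the hypothesis that $\Omega$ contains a neighborhood of the origin; without that hypothesis the $\Omega$-norm could degenerate on some lattice vectors and the feasible sequences could have unbounded length.
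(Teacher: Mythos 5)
Your proofs of (a) and (b) match the paper's verbatim: both follow immediately from $\Omega\subset\Delta \Rightarrow \|\cdot\|_\Omega^*\le\|\cdot\|_\Delta^*$ and $\|\cdot\|_{c\Omega}^* = c\|\cdot\|_\Omega^*$ together with the $\Omega$-independence of $L(k)$ and $U(k)$. For (c), however, you take a genuinely different route. The paper proves continuity in one line ``by (a), (b), and a scaling argument'': since $\Omega$ contains a neighborhood of $0$, Hausdorff convergence $\Omega_n\to\Omega$ implies that for every $\epsilon>0$ one eventually has $(1-\epsilon)\Omega\subset\Omega_n\subset(1+\epsilon)\Omega$, so monotonicity and scaling squeeze $\mathfrak{l}_k(\Omega_n)$ between $(1-\epsilon)\mathfrak{l}_k(\Omega)$ and $(1+\epsilon)\mathfrak{l}_k(\Omega)$. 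You instead give a direct compactness argument: a uniform lower bound $\|v\|_{\Omega_n}^* \ge \tfrac{r}{2}|v|$ shows near-minimizers come from a fixed finite set, from which upper and lower semicontinuity follow by passing to constant subsequences. Both arguments are correct and rely on the same essential hypothesis (a ball around the origin inside $\Omega$); the paper's squeeze is shorter and makes the dependence on (a) and (b) transparent, whereas yours is more explicit about why the feasible set doesn't degenerate. One small clean-up for your lower-semicontinuity step: after extracting the constant subsequence you should note that the argument applies to every subsequence, so the full sequence $\mathfrak{l}_k(\Omega_n)$ converges to $\mathfrak{l}_k(\Omega)$, not merely a subsequence.
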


\begin{proof} If $\Omega \subset \Delta$, then $\| w \|_\Omega^* \le \| w \|_\Delta^*$ for any vector $w \in \R^2$, and this implies (a). Similarly, $\|w\|^*_{c \cdot \Omega} = c \cdot \|w\|^*_\Omega$ implies (b). Finally, (c) follows from (a) and (b) and a scaling argument.
\end{proof}

These lattice formulas agree with the algebraic and fan formulations introduced above, from which it is apparent why we needed to resolve singularities.

\begin{prop}[Lattice formula] Let $\Omega \subset \R^2$ be a convex polytope. Then
\[\lk_k(Y_\Sigma,A_\Omega) = \lk_k(\Omega) \qquad\qquad \mathfrak{u}_k(Y_\Sigma,A_\Omega) = \mathfrak{u}_k(\Omega)\]
\end{prop}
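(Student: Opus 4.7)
The plan is to invoke Lemma~\ref{lem:fan_formula} and reduce the statement to showing that the lattice infima agree with the fan infima over $L(\wt{\Sigma}, k)$ and $U(\wt{\Sigma}, k)$. I will construct value-preserving correspondences between lattice sequences and fan multiplicity vectors, using Proposition~\ref{prop:blowup} to pass to a refinement of $\wt{\Sigma}$ adapted to each particular sequence.

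The inequality $\lk_k(\Omega) \le \lk_k(Y_\Sigma, A_\Omega)$ follows by \emph{unfolding}: given $(a_\rho) \in L(\wt{\Sigma}, k)$, form the sequence $\bar v$ obtained by concatenating $a_\rho$ copies of each primitive ray generator $v_\rho$. Then $\bar v \in L(k)$ and $\sum_i \|v_i\|_\Omega^* = \sum_\rho a_\rho \|v_\rho\|_\Omega^*$. For the reverse inequality, \emph{group} a given $\bar v = (v_1, \dots, v_m) \in L(k)$ by primitive directions: uniquely write $v_i = b_i u_i$ with $u_i \in \Z^2$ primitive and $b_i \in \Z_{\ge 1}$. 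Since the normal fan $\Sigma$ of a bounded convex polytope $\Omega \subset \R^2$ supports all of $\R^2$, one may refine $\wt{\Sigma}$ by iterated star subdivisions to a smooth fan $\wt{\Sigma}'$ whose ray set contains $\on{Cone}(u_i)$ for every $i$. Setting $a_\rho := \sum_{i : u_i \in \rho} b_i$ for rays of $\wt{\Sigma}'$ produces an element of $L(\wt{\Sigma}', k)$ with matching value (using positive homogeneity of $\|\cdot\|_\Omega^*$ along each ray), and Proposition~\ref{prop:blowup} with $\eps = 0$ applied iteratively along the sequence of star subdivisions gives $\lk_k(Y_\Sigma, A_\Omega) = \lk_k(\wt{Y}_{\wt{\Sigma}'}, \pi^* A_\Omega)$, whence $\lk_k(Y_\Sigma, A_\Omega) \le \lk_k(\Omega)$.

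For $\uk_k$ the same correspondences apply, with the additional observation that a lattice sequence $\bar v$ has all its vectors in $\on{span}(e_j)$ for some fixed $j$ precisely when its image under grouping is supported on the rays $\on{Cone}(\pm e_j)$; in that case the relation $\sum_\rho a_\rho v_\rho = 0$ forces the two multiplicities to coincide, placing the vector in $F(\wt{\Sigma})$. The only borderline case is $\bar v = (e_j, -e_j) \in U(1)$, whose grouped data represents the irreducible cocharacter curve $[C_{\on{Cone}(e_j)}]$. By Lemma~\ref{lem:fiber_classes} this class lies in $\on{Mov}_1(\wt{Y})^+$, so it contributes to the algebraic $\uk_1$ with value $\|e_j\|_\Omega^* + \|{-}e_j\|_\Omega^*$ and matches the corresponding lattice contribution. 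The main subtlety I anticipate is exactly this compatibility at the level of irreducible self-intersection-zero classes; once this is reconciled with the definition of $F(\wt{\Sigma})$, everything else is a direct unfolding/grouping bijection.
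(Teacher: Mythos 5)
Your proposal is correct, and the ``unfolding'' direction is the same as the paper's, but the direction showing that the lattice infimum bounds the algebraic/fan quantity from above takes a genuinely different route. The paper works directly in the fixed resolution $\wt{\Sigma}$: it places each $v_i$ in the two-dimensional cone $\sigma_i \in \wt{\Sigma}$ containing it and uses smoothness of $\wt{\Sigma}$ to write $v_i = \sum_{\rho\in\sigma_i(1)} a_{i,\rho} v_\rho$ with $a_{i,\rho}\in\Z_{\geq0}$; accumulating $a_\rho := \sum_i a_{i,\rho}$ then gives an element of $L(\wt{\Sigma},k)$ with matching value by piecewise linearity of $\|\cdot\|_\Omega^*$. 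You instead factor each $v_i = b_i u_i$ by its primitive direction, refine $\wt{\Sigma}$ to a further smooth fan $\wt{\Sigma}'$ whose rays contain every $\on{Cone}(u_i)$, and invoke Proposition~\ref{prop:blowup} (iterated with $\eps=0$) to transport the resulting element of $L(\wt{\Sigma}',k)$ back to $(Y_\Sigma,A_\Omega)$. Both arguments are correct; the paper's avoids auxiliary refinement and is slightly shorter, whereas yours makes the resolution adapt to the sequence and surfaces the real mechanism --- blowup-invariance of $\lk_k$ and $\uk_k$ --- that makes the fan formula independent of the chosen resolution. I would also flag that your explicit handling of the borderline sequence $\bar{v}=(e_j,-e_j)$ is a genuine improvement over the paper's treatment: read literally, the set $F(\Xi)$ in \S\ref{subsec:fan_formulation} admits $m=1$, which would wrongly exclude the irreducible fiber class from the fan-side infimum even though it belongs to $\on{Mov}_1(\wt{Y})^+$ by Lemma~\ref{lem:fiber_classes}; routing through the algebraic definition of $\uk_k$ as you do sidesteps this off-by-one issue cleanly.
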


\begin{proof} We prove inqualities in either direction between the fan formula in Lemma \ref{lem:fan_formula} and the lattice formula in this proposition.

\vspace{3pt}

\emph{Lattice $\le$ Fan.} First we show that the lattice formulas bound the fan formulas from below.
\[\mathfrak{l}_k(\Omega) \le \mathfrak{l}_k(Y_\Sigma,A_\Omega) \qquad \mathfrak{u}_k(\Omega) \le \mathfrak{u}_k(Y_\Sigma,A_\Omega)\]
Indeed, let $a \in L(k) \subset \Z^{\Sigma(1)}$ and denote $m$ be the sum of the entries $a_\rho$. Partition $\{1,\dots,m\}$ arbitrarily into sets $I_\rho$ for each $\rho \in \Sigma(1)$. Let $v_1,\dots,v_m$ be the sequence in $\Z^2$ with $v_i = v_\rho$ for each $i \in I_\rho$. Then we have
\[
v_i \neq 0\text{ for each }i \qquad\text{and}\qquad \sum_\rho a_\rho \cdot \|v_\rho\|^*_\Omega = \sum_i \|v_i\|_\Omega^*
\]
The infimum defining $\mathfrak{l}_k(\Omega)$ in Definition \ref{def:lattice_def_lk_uk} thus implies that $\mathfrak{l}_k(\Omega) \le \mathfrak{l}_k(Y_\Sigma,A_\Omega)$. If $a \in U(k) \subset L(k)$, then there are two cases for the sequence $v_i$ constructed above. 
\begin{equation} \tag{1} m = 2 \text{ and }v_1 = -v_2 = e_j \qquad \text{ for }j = 1,2\end{equation}
\begin{equation} \tag{2} \text{ for each $j = 1,2$, there is an $i$ with }v_i \neq \pm e_j
\end{equation}
Case (1) corresponds to the irreducible, self-intersection $0$ case. Case (2) corresponds to the positive self-intersection case. These are precisely the cases considered in the infimum defining $\mathfrak{u}_k(\Omega)$ in Definition \ref{def:lattice_def_lk_uk}, and so $\mathfrak{u}_k(\Omega) \le \mathfrak{u}_k(Y_\Sigma,A_\Omega)$

\vspace{3pt}

\emph{Fan $\le$ Lattice.} Now we show that the fan formulas bound the lattice formulas from below.
\[\mathfrak{l}_k(Y_\Sigma,A_\Omega) \le \mathfrak{l}_k(\Omega) \qquad \mathfrak{u}_k(Y_\Sigma,A_\Omega) \le \mathfrak{u}_k(\Omega)\]
Let $v_1,\dots,v_m$ be a sequence of non-zero vectors in $\Z^2$ for $m \ge k+1$. Note that each vector $v_i$ in some cone $\sigma_i$ in the resolved fan $\wt{\Sigma}$ associated to $\Omega$. Thus, we can define
\begin{equation}
(a_\rho) \in \Z^{\wt{\Sigma}(1)} \quad\text{given by} \quad a_\rho := \sum_i a_{i,\rho} \quad\text{where} \quad v_i = \sum_{\rho \in \sigma_i(1)} a_{i,\rho} \cdot v_{\rho}
\end{equation}
Note that the coefficients $a_{i,\rho}$ are unique and integral since $\wt{\Sigma}$ is smooth. We now claim that $(a_\rho)$ is an element of $L(\Sigma,k) \subset \Z^{\wt{\Sigma}(1)}$. First, note that $a_\rho \ge 0$ since $a_{i,\rho} \ge 0$ for each $i$ and $\rho$. Second, note that $a_{i,\rho} \ge 1$ for each $i$ and some $\rho \in \sigma_i$, since $v_i$ is non-zero. Third, note that
\[\sum_\rho a_\rho \cdot v_\rho = \sum_\rho \big(\sum_i a_{i,\rho} \cdot v_\rho \big) = \sum_i \big(\sum_\rho a_{i,\rho} \cdot v_\rho\big) = \sum_i v_i = 0\]
This verifies the conditions in (\ref{eqn:U_k_conditions}). Finally, the $\Omega$-norm is linear on each cone in $\Sigma$, and so
\[\sum_\rho a_\rho \cdot \|v_\rho\|_\Omega^* = \sum_i \sum_{\rho \in \sigma_i} a_{i,\rho} \cdot \|v_\rho\|_\Omega^* = \sum_i \|\sum_{\rho \in \sigma_i} a_{i,\rho} \cdot v_\rho\|_\Omega^* = \sum_i \|v_i\|_\Omega^*\]
It now follows from Lemma \ref{lem:fan_formula} and Definition \ref{def:lattice_def_lk_uk} that $\mathfrak{l}_k(Y_\Sigma,A_\Omega) \le \mathfrak{l}_k(\Omega)$.

\vspace{3pt}

Now suppose that the sequence $v_1,\dots,v_m$ is admissible for calculating $\mathfrak{u}_k(\Omega)$. There are two cases. In case (1), $m = 2$ and $v_1 = -v_2 = e_j$ for some fixed $j = 1,2$. Therefore
\[a \in \Z^{\Sigma(1)} \quad\text{represents the curve class}\quad [C_\rho] \quad \text{with}\quad \rho = e_j\]
In particular, $(a_\rho)$ represents an irreducible curve of self-intersection $0$ by Lemma \ref{lem:fiber_classes} and is in $U(\Sigma,k)$. In case (2), for each $j = 1,2$ there is an index $i$ such that $v_i \neq e_i$. In particular,
\[a \in \Z^{\Sigma(1)} \quad\text{does not represent}\quad k \cdot [C_\rho] \quad\text{with}\quad \rho = e_j \text{ for }j = 1,2\text{ and }k \ge 2\]
It follows in this case also that $(a_\rho) \in U(\Sigma,k)$. We can now conclude from Lemma \ref{lem:fan_formula} and Definition \ref{def:lattice_def_lk_uk} that $\mathfrak{u}_k(Y_\Sigma,A_\Omega) \le \mathfrak{u}_k(\Omega)$. \end{proof}

We conclude this section by providing a few simple situations where $\mathfrak{l}_k$ and $\mathfrak{u}_k$ agree. First, the two quantities coincide when $k = 1$.

\begin{lemma} \label{lem:first_bound} Let $\Omega$ be any strongly convex moment domain. Then
\[\mathfrak{u}_1(\Omega) = \mathfrak{l}_1(\Omega) = \on{max}(a(\Omega),b(\Omega))\]
\end{lemma}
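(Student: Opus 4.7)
The plan is to compute both quantities explicitly from the definitions by exploiting strong convexity of $\Omega$. First, I would extract the $\Omega$-norms of the standard basis vectors $\pm e_j$. Strong convexity implies that $\Omega$ is monotone in the non-negative quadrant: for any $(u_1,u_2) \in \Omega$ one has $0 \leq u_1 \leq a(\Omega)$ and $0 \leq u_2 \leq b(\Omega)$, with the axial points $(a(\Omega),0)$ and $(0,b(\Omega))$ lying in $\Omega$ by the definitions of $a,b$ and convexity. Consequently
\[
\|e_1\|_\Omega^* = a(\Omega), \qquad \|e_2\|_\Omega^* = b(\Omega), \qquad \|-e_j\|_\Omega^* = 0 \text{ for } j=1,2.
\]
The two-term sequences $(e_j, -e_j)$ for $j=1,2$ lie in both $L(1)$ and $U(1)$, so they furnish candidate values for the infima defining $\mathfrak{l}_1(\Omega)$ and $\mathfrak{u}_1(\Omega)$.

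Second, I would establish the pointwise inequality $\|v\|_\Omega^* \geq \max(0,\, a(\Omega)\, v_1,\, b(\Omega)\, v_2)$ for every $v = (v_1,v_2) \in \Z^2$ by plugging the axial points $(a(\Omega),0)$ and $(0,b(\Omega))$ into the definition of the $\Omega$-norm. Summing this inequality along a closed sequence $\bar v = (v_1,\dots,v_m)$ with $\sum_i v_i = 0$ and introducing the total variations $P_j := \sum_i \max((v_i)_j, 0)$ (which equal $\tfrac{1}{2}\sum_i |(v_i)_j|$ by the closure relation) yields
\[
\sum_{i=1}^m \|v_i\|_\Omega^* \;\geq\; \max\bigl(a(\Omega)\, P_1,\; b(\Omega)\, P_2\bigr).
\]

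Third, I would stratify the analysis by whether $\bar v$ is supported on a single coordinate axis. If every $v_i$ lies in $\Z e_j$ then the non-vanishing and closure constraints force $P_j \geq 1$, while the complementary variation vanishes; in the genuinely mixed regime both $P_1, P_2 \geq 1$. Matching each stratum's lower bound against the explicit axial candidates $(e_j,-e_j)$ identifies the extremum, and one reads off the claimed value from the comparison. For $\mathfrak{u}_1(\Omega)$ the analysis is formally identical: the only sequences excluded by $U(1) \subset L(1)$ are the higher multiples $(m e_j, -m e_j)$ with $m \geq 2$, which are strictly dominated by $(e_j, -e_j)$ and play no role in the infimum.

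The main obstacle is the bookkeeping in the stratification step: one must verify that the pointwise inequality $\|v\|_\Omega^* \geq \max(0, a(\Omega) v_1, b(\Omega) v_2)$ remains sharp enough after summation to match the axial candidates, and that no oblique lattice combination can beat them by simultaneously activating both coordinate bounds. Strong convexity is essential here, as it is what forces the axial vertices of $\Omega$ to genuinely realize the suprema computed in the first step.
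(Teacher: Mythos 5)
Your approach is essentially the same as the paper's: compute the $\Omega$-norms at the axial points, observe that the two-term axial sequences $(e_j,-e_j)$ are admissible for both $L(1)$ and $U(1)$, and match against a lower bound obtained from the pointwise inequality $\|v\|_\Omega^* \geq \max(0,\, a(\Omega)\,v_1,\, b(\Omega)\,v_2)$. Your derivation is correct and, if anything, is laid out more carefully than the paper's proof: you verify $\|e_1\|_\Omega^* = a(\Omega)$, $\|e_2\|_\Omega^* = b(\Omega)$, $\|{-e_j}\|_\Omega^* = 0$ using the reflection-convexity argument (which the paper's proof uses implicitly but never spells out), and you stratify the lower bound by whether $\bar v$ is supported on the $x$-axis, the $y$-axis, or mixed. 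The paper compresses this to the observation that some $v_i$ must have a positive coordinate.

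One thing you should make explicit. Your stratification gives: $\geq a(\Omega)$ on the $x$-axis stratum, $\geq b(\Omega)$ on the $y$-axis stratum, and $\geq \max(a(\Omega),b(\Omega))$ on the mixed stratum; combined with the candidates $(e_j,-e_j)$ achieving $a(\Omega)$ and $b(\Omega)$ respectively, the infimum is $\min(a(\Omega),b(\Omega))$, not $\max(a(\Omega),b(\Omega))$. The displayed statement of the lemma has $\max$ where $\min$ is meant — this is a typo. It is confirmed by the paper's own proof (which explicitly proves $\sum_i\|v_i\|_\Omega^* \geq \min(a(\Omega),b(\Omega))$), by the sentence immediately before the lemma ("We show that $\mathfrak{l}_1(\Omega)$ and $\mathfrak{u}_1(\Omega)$ are both given by $\min(a(\Omega),b(\Omega))$"), and by the downstream use $\mathfrak{u}_1(\Omega) = \min(a(\Omega),b(\Omega))$ in the Gromov width computation. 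When you write "one reads off the claimed value from the comparison," be sure you land on $\min$; chasing the literal statement would put you in the position of trying to prove something your own lower bound (and the $(e_j,-e_j)$ candidates) already refutes whenever $a(\Omega)\neq b(\Omega)$.
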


\begin{proof} Fix a vector $v \in \Z^2 \setminus 0$. Then we have $\|v\|^*_\Omega = 0$ if $v \cdot e_1 \le 0$ and $v \cdot e_2 \le 0$. Similarly, 
\[\|v\|^*_\Omega \ge (v \cdot e_1) a(\Omega) \text{ if }v \cdot e_1 > 0 \qquad\text{and}\qquad \|v\|^*_\Omega > (v \cdot e_2) b(\Omega) \text{ if }v \cdot e_2 > 0\]
With this in mind, fix a sequence of non-zero vectors $\bar{v} = (v_1,\dots,v_m)$ in $\Z^2$ with $v_1 + \cdots + v_m = 0$ and $m \ge 2$. We can assume without loss of generality that $v_1 \cdot e_1 > 0$ or $v_2 \cdot e_2 > 0$, so that
\[
\sum_i \|v_i\|_\Omega^* \ge  \text{min}(a(\Omega),b(\Omega))
\]
This minimum is achieved by one of the sequences $(e_1,-e_1)$ and $(e_2,-e_2)$. These sequences are admissible for $\mathfrak{u}_1$ and $\mathfrak{l}_1$. This implies the result.
\end{proof} 

Second, the two quantities coincide whenever the widths $a(\Omega)$ and $b(\Omega)$ of $\Omega$ (see \S \ref{subsec:intro_toric_domains}) are the same.

\begin{lemma} \label{lem:lk_uk_equal_when_a_is_b} Let $\Omega$ be a strongly convex moment domain with $a(\Omega) = b(\Omega)$. Then
\[\mathfrak{l}_k(\Omega) = \mathfrak{u}_k(\Omega)\]
\end{lemma}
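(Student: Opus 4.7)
The plan is to establish the two inequalities $\mathfrak{l}_k(\Omega) \le \mathfrak{u}_k(\Omega)$ and $\mathfrak{u}_k(\Omega) \le \mathfrak{l}_k(\Omega)$ separately. The first is immediate from Definition \ref{def:lattice_def_lk_uk}, since $U(k) \subset L(k)$, so $\mathfrak{l}_k(\Omega)$ is an infimum over a larger set than $\mathfrak{u}_k(\Omega)$. The substance of the lemma is in the reverse inequality, which I will prove by showing that each sequence $\bar{v} \in L(k) \setminus U(k)$ admits a replacement $\bar{v}' \in U(k)$ with $\sum \|v_i'\|_\Omega^* \le \sum \|v_i\|_\Omega^*$.

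Fix $\bar{v} = (v_1,\dots,v_m) \in L(k) \setminus U(k)$. By the definition of $U(k)$, every $v_i$ lies in $\operatorname{span}(e_j)$ for some fixed $j \in \{1,2\}$, and we are not in the trivial case $m=2$, $v_1 = -v_2 = \pm e_j$. By the symmetry provided by the hypothesis $a(\Omega) = b(\Omega) =: c$, we may assume $j=1$. Write $v_i = c_i e_1$ with $c_i \in \Z \setminus 0$, and set $P := \sum_{c_i > 0} c_i$, which equals $\sum_{c_i < 0} |c_i|$ since $\sum c_i = 0$. Because $\Omega \subset [0,\infty)^2$, the $\Omega$-norm vanishes on $-e_1$, so the cost of $\bar{v}$ is exactly
\[
\sum_{i=1}^m \|c_i e_1\|_\Omega^* = P \cdot a(\Omega) = P c.
\]
A short case analysis shows $P \ge 2$: if $P = 1$, then the positive and negative entries would consist of a single $+1$ and a single $-1$, giving $m = 2$ with $v_1 = -v_2 = \pm e_1$, which is excluded. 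Moreover, $2P \ge m \ge k+1$, so $P \ge \lceil (k+1)/2 \rceil$.

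Now define the replacement
\[
\bar{v}' := \big(\underbrace{e_1,\dots,e_1}_{P-1},\, e_2,\, \underbrace{-e_1,\dots,-e_1}_{P-1},\, -e_2\big).
\]
Since $P \ge 2$, every entry is nonzero, and the entries sum to $0$. The length is $2P \ge k+1$, so $\bar{v}' \in L(k)$. Because $e_2 \notin \operatorname{span}(e_1)$ and $e_1 \notin \operatorname{span}(e_2)$, $\bar{v}'$ is not confined to either coordinate axis, so $\bar{v}' \in U(k)$. Using $\|e_2\|_\Omega^* = b(\Omega) = c$ and $\|-e_i\|_\Omega^* = 0$, the cost of $\bar{v}'$ is
\[
(P-1)\,a(\Omega) + b(\Omega) = (P-1)c + c = Pc,
\]
which matches the cost of $\bar{v}$. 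Hence $\mathfrak{u}_k(\Omega) \le Pc = \sum_i \|v_i\|_\Omega^*$, and taking the infimum over $\bar{v} \in L(k)$ yields $\mathfrak{u}_k(\Omega) \le \mathfrak{l}_k(\Omega)$, completing the proof.

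The key (and essentially only) subtlety is the use of the hypothesis $a(\Omega) = b(\Omega)$, which is what makes the swap $e_1 \leftrightarrow e_2$ cost-preserving; without it, replacing one $e_1$ by an $e_2$ costs $b(\Omega) - a(\Omega)$ extra, and the argument produces only an inequality rather than an equality. The other minor point to verify is the inequality $P \ge 2$ needed for the replacement sequence to consist of nonzero vectors, which I handled above by ruling out the degenerate case that would otherwise place $\bar{v}$ in $U(k)$ already.
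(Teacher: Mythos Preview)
Your proof is correct and follows essentially the same approach as the paper: both replace a sequence lying entirely in $\operatorname{span}(e_j)$ by one that involves both coordinate directions, using $a(\Omega)=b(\Omega)$ to keep the total $\Omega$-norm unchanged. Your argument is in fact slightly more careful than the paper's, since you handle arbitrary multiples $c_i e_1$ directly rather than tacitly reducing to the case $v_i=\pm e_1$.
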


\begin{proof} Fix a sequence $\bar{v} = v_1,\dots,v_m$ in $L(k)\setminus U(k)$. It suffices to find a sequence in $U(k)$ with the same $\Omega$-norm. By the definition of $U(k)$, we have $v_i = \pm e_j$ for some fixed $j = 1$ or $j = 2$. Without loss of generality, we assume that
\[
v_i = (-1)^i \cdot e_1
\]
Now let $\bar{w} \in U(k)$ be the sequence with
\[
w_i = (-1)^i e_1 \text{\; \; for\; \; }i = 1,2 \qquad\text{and}\qquad w_i = (-1)^i e_2 \text{\;\;for\;\; }3 \le i \le m
\]
Then since $\|e_1\|_\Omega^* = a(\Omega) = b(\Omega) = \|e_2\|_\Omega^*$, we have
\[
\sum_i \|w_i\|_\Omega^* = \sum_i \|v_i\|_\Omega^*
\]
This concludes the proof.
\end{proof}

\subsection{Polytope formulation} \label{subsec:polytope_formulation} Our last formulation is given in terms of lattice polytopes, and is valid for any pseudo-polarized toric variety $(Y_\Sigma,A_\Omega)$ in any dimension.

\begin{remark} This version plays no role in the rest of the paper. However, it is somewhat closer in spirit to other lattice formulas for the ECH capacities from \cite{ccfhr_sym_14,cri_sym_19,wor_ech_19}, and can be extended to make sense for higher dimensional toric varieties. Thus we have chosen to include it. \end{remark}

Any movable curve $C$ in the toric variety $Y_\Sigma$ has an associated polytope $P(C)$. Indeed, to a curve corresponding to a relation
\[a \in \Z^{\Sigma(1)} \qquad\text{with}\qquad \sum a_\rho v_\rho=0\]
The polytope $P(C)$ has a facet of (relative) volume $m_\rho$ and with normal $v_\rho$ for each $\rho\in\Sigma(1)$. One may verify that
\[-K_\Sigma \cdot C = \laff(\partial P(C)) \qquad A_\Omega \cdot C = \ell_\Omega(\partial P(C))\]
Here $\laff(\partial P(C))$ denotes the affine area of $P(C)$ and $\ell_\Omega(\partial P(C))$ denotes the $\Omega$-area of $P(C)$. In complex dimension $2$, $P(C)$ has an edge for each ray $\rho \in \Sigma(1)$ of length $m_\rho$ and orthogonal to $v_\rho$. In this case, $P(C)$ is equivalent to the polytope associated to $C$ as a nef divisor. 

\begin{lemma} Let $(Y_\Sigma,A_\Omega)$ be a pseudo-polarised toric surface. Then,
$$\mfk{l}_k(Y_\Sigma,A_\Omega)=\inf_P\{\ell_\Omega(\partial P):\laff(\partial P)\geq k+1\}$$
where $P$ ranges over all (possibly degenerate) convex lattice polygons. Similarly, for $k\geq 2$
$$\mfk{u}_k(Y_\Sigma,A_\Omega)=\inf_P\{\ell_\Omega(\partial P):\laff(\partial P)\geq k+1\}$$
where $P$ now ranges over all non-degenerate (i.e. $2$ dimensional) convex lattice polygons.
\end{lemma}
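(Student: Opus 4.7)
The plan is to translate the fan-based formulas for $\lk_k$ and $\uk_k$ from Lemma \ref{lem:fan_formula} into the polygon language, using the classical bijection between lattice relations in $\R^2$ and convex lattice polygons. First I would establish the polygon--relation dictionary: a vector $(a_\rho) \in L(\widetilde{\Sigma},k)$ (with $a_\rho \geq 0$ and $\sum_\rho a_\rho v_\rho = 0$) determines a closed convex (possibly degenerate) lattice polygon $P(a)$ by cyclically ordering the rays $v_\rho$ by angle and traversing the edge vectors $a_\rho\cdot v_\rho^\perp$. Conversely, any convex lattice polygon $P$ whose outward normals lie in $\widetilde{\Sigma}(1)$ determines such a relation. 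Under this bijection, the paragraph preceding the statement records the identities
\[
\ell_{\mathrm{aff}}(\partial P(a)) = \sum_\rho a_\rho, \qquad \ell_\Omega(\partial P(a)) = \sum_\rho a_\rho \|v_\rho\|_\Omega^*,
\]
so that the constraint $\sum_\rho a_\rho \geq k+1$ defining $L(\widetilde{\Sigma},k)$ translates into $\ell_{\mathrm{aff}}(\partial P) \geq k+1$ and the two objective functions coincide.

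With this dictionary in hand, Lemma \ref{lem:fan_formula} realises $\lk_k(Y_\Sigma,A_\Omega)$ as the infimum of $\ell_\Omega(\partial P)$ over lattice polygons whose outward normals lie among the rays of $\widetilde{\Sigma}$. An arbitrary convex lattice polygon $P'$ need not satisfy this, but its primitive outward normals always lie in $N$, so one can refine $\widetilde{\Sigma}$ to a smooth fan containing them. By Proposition \ref{prop:blowup} the value $\lk_k(Y_\Sigma,A_\Omega)$ is unchanged under further refinement, so every convex lattice polygon becomes admissible, giving the polygon formula for $\lk_k$.

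For $\uk_k$ the same correspondence applies, but one must identify which polygons correspond to curve classes excluded from $\on{Mov}_1^+$. A polygon $P$ is degenerate (one-dimensional) precisely when all of its outward normals lie on a single line $\R\cdot w$ for a primitive $w \in N$, in which case $P$ is a segment and corresponds to a class $m\cdot[C_w]$ with $m \geq 1$; the associated cocharacter curve $C_w$ is rational with $C_w^2 = 0$, so by adjunction $-K_{Y_\Sigma}\cdot C_w = 2$. A degenerate polygon of primitive length $m$ has $\ell_{\mathrm{aff}}(\partial P) = 2m$, so the condition $\ell_{\mathrm{aff}}(\partial P) \geq k+1 \geq 3$ forces $m \geq 2$, and the class $m\cdot[C_w]$ with $m \geq 2$ is excluded from $\on{Mov}_1^+$ by definition. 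Conversely, every non-degenerate lattice polygon corresponds (after refinement as above) to a movable class $C$ with $C^2 > 0$, which lies in $\on{Mov}_1^+$. Hence for $k \geq 2$ the $\uk_k$-infimum is realised exactly over non-degenerate convex lattice polygons.

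The main technical obstacle I anticipate is the bookkeeping in the degenerate case: one must verify that when $P$ collapses to a segment (traversed as two coincident edges) the identities $\ell_{\mathrm{aff}}(\partial P) = \sum_\rho a_\rho$ and $\ell_\Omega(\partial P) = \sum_\rho a_\rho\|v_\rho\|_\Omega^*$ continue to hold with the natural conventions, and that the relation $\sum_\rho a_\rho v_\rho = 0$ corresponds to exactly $m\cdot[C_w]$ for the segment of primitive length $m$. The remaining subtlety is the refinement step: since refining the fan leaves $N$, the polytope $\Omega$, and the $\Omega$-norm $\|\cdot\|_\Omega^*$ unchanged, the polygon data is preserved and Proposition \ref{prop:blowup} supplies the required invariance.
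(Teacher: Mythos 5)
Your proposal is essentially the same argument as the paper's. Both proofs rest on the same two pillars: (i) the polygon--relation dictionary, under which $\ell_{\mathrm{aff}}(\partial P) = -K\cdot C$ and $\ell_\Omega(\partial P) = A_\Omega\cdot C$, which converts Lemma~\ref{lem:fan_formula} into a polygon formula restricted to polygons with normals in $\widetilde{\Sigma}(1)$; and (ii) the blowup-invariance from Proposition~\ref{prop:blowup}, which lets you refine the fan to accommodate the normals of an arbitrary polygon without changing $\lk_k$ or $\uk_k$. Your treatment of the degenerate case for $\uk_k$ is a bit more explicit than the paper's: the paper simply records that full-dimensional polygons correspond to big curve classes and then passes to the refinement step, whereas you spell out that degenerate segments correspond to classes $m\cdot[C_w]$ with $C_w^2=0$ and $-K\cdot C_w = 2$, so the constraint $\ell_{\mathrm{aff}}\geq k+1\geq 3$ forces $m\geq 2$, and such multiples are not irreducible and hence are excluded from $\on{Mov}_1^+$. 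This extra care is useful bookkeeping but does not change the route; if anything, one might prefer your computation of $-K\cdot C_w=2$ via Corollary~\ref{cor:cocharacter_index} (that is, $\sum_\rho R_{w,\rho}=1+1$) rather than adjunction, which avoids having to argue that $C_w$ is a smooth rational curve.
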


\begin{proof} By resolving it suffices to assume that $Y_\Sigma$ is smooth. We will consider the case of $\mfk{u}_k$; $\mfk{l}_k$ is studied very similarly. Note that the polytope $P$ is full-dimensional if and only if the corresponding curve class $C$ is big.  Thus, we must verify that one can range over all convex lattice polygons $P$ as opposed to only those whose facet normals define rays in the fan for $Y_\Sigma$. 

\vspace{3pt}

This follows from the blowup-invariance of $\mfk{l}_k$ and $\mfk{u}_k$ from Proposition \ref{prop:blowup}. Indeed, suppose a polygon $P_0$ is preferable to all polygons with all slopes in common with $\Omega$, but that $\partial P_0$ contains at least one facet with a normal vector that does not define a ray in the fan for $Y$. There exists a series of blowups $\pi\colon\wt{Y}\to Y$ such that the normals to all facets of $P_0$ are contained in the fan for $\wt{Y}$. Thus $P_0$ defines a big and movable curve $C_0$ on $\wt{Y}$. However, Proposition \ref{prop:blowup} implies that there exists a big and movable curve $C$ on $Y$ such that $\pi^*C$ is preferable to $C_0$. By construction, the facet normals of $\partial P(\pi^*C)$ are among the ray generators of the fan for $Y$ and so we are done.
\end{proof}

\subsection{Asymptotics} \label{subsec:asymptotics} We conclude this section by studying the asymptotic behavior of $\lk_k$ and $\uk_k$ in the limit as $k \to \infty$. 

\begin{prop}[Asymptotics] \label{prop:lk_uk_asymptotics} Let $(Y,A)$ be a pseudo-polarized smooth surface with effective anti-canonical divisor $-K_Y$. Then
\begin{equation} \label{eqn:lk_uk_asymptotics}
\lim_{k\to\infty}\frac{\mfk{l}_k(Y,A)}{k}=\lim_{k\to\infty}\frac{\mfk{u}_k(Y,A)}{k}=\inf_{C\in\on{Mov}_1(Y)_\Z}\frac{A\cdot C}{-K_Y\cdot C}
\end{equation}
\end{prop}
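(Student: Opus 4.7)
The plan is to set $\lambda := \inf_{C\in\on{Mov}_1(Y)_\Z}(A\cdot C)/(-K_Y\cdot C)$ and prove separately that
\[
\liminf_{k\to\infty}\frac{\mfk{l}_k(Y,A)}{k}\geq\lambda \quad\text{and}\quad \limsup_{k\to\infty}\frac{\mfk{u}_k(Y,A)}{k}\leq\lambda.
\]
Since the inclusion $\on{Mov}_1(Y)^+\subset\on{Mov}_1(Y)$ gives the trivial inequality $\mfk{l}_k\leq\mfk{u}_k$, these two bounds force both limits to exist and to coincide with $\lambda$.

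The lower bound is essentially by definition: for any admissible class $C\in\on{Mov}_1(Y)_\Z$ with $-K_Y\cdot C\geq k+1$ one has
\[
A\cdot C\;\geq\;\lambda\cdot(-K_Y\cdot C)\;\geq\;\lambda(k+1),
\]
which applies equally to both $\mfk{l}_k$ and $\mfk{u}_k$. Dividing by $k$ and taking the limit gives the claim.

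The upper bound is the heart of the argument. The main subtlety is that the obvious strategy of scaling a near-optimal class $C_0$ by large integers does not suffice for $\mfk{u}_k$: if $C_0$ happens to be an irreducible self-intersection $0$ curve, then $N C_0$ fails to lie in $\on{Mov}_1(Y)^+_\Z$ for $N\geq 2$. To circumvent this, fix $\epsilon>0$ and pick $C_0\in\on{Mov}_1(Y)_\Z$ with $(A\cdot C_0)/(-K_Y\cdot C_0)<\lambda+\epsilon$. Since $Y$ is projective it admits an ample integer divisor $H$, and on the smooth surface $Y$ one has $\on{Mov}_1(Y)=\on{Nef}^1(Y)$, so $H$ is movable; moreover $H^2>0$ (as $H$ is big), $H\cdot C_0\geq 0$ (as $H$ is nef and $C_0$ is movable), and $-K_Y\cdot H\geq 1$ (as $H$ is ample and $-K_Y$ is effective). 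For each $k\geq 1$ set $N_k:=\lceil(k+1)/(-K_Y\cdot C_0)\rceil$ and define $C_k:=N_k C_0+H$. Then $C_k\in\on{Mov}_1(Y)_\Z$ as a sum of movable classes, and
\[
C_k^2=N_k^2 C_0^2+2N_k\,(C_0\cdot H)+H^2\geq H^2>0,
\]
so $C_k\in\on{Mov}_1(Y)^+_\Z$. Moreover $-K_Y\cdot C_k\geq N_k(-K_Y\cdot C_0)\geq k+1$, so $C_k$ is admissible for $\mfk{u}_k$, giving
\[
\frac{\mfk{u}_k(Y,A)}{k}\leq\frac{A\cdot C_k}{k}=\frac{N_k\,(A\cdot C_0)+A\cdot H}{k}\xrightarrow{k\to\infty}\frac{A\cdot C_0}{-K_Y\cdot C_0}<\lambda+\epsilon.
\]
Letting $\epsilon\to 0$ yields $\limsup\mfk{u}_k(Y,A)/k\leq\lambda$, which closes the argument.
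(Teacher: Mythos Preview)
Your proof is correct and follows essentially the same strategy as the paper: bound from below by the definition of the infimum, and bound $\mfk{u}_k$ from above by scaling a near-optimal class and adding a big movable class to force positive self-intersection. Your organization is somewhat more economical. The paper first treats $\mfk{l}_k$ separately (using the scaled class $\lceil(k+1)/k_\eps\rceil C_\eps$ for the upper bound and an auxiliary real-coefficient version $\mfk{l}_k^\R$ for the lower bound), and only afterwards shows the $\mfk{u}_k$ limit coincides by adding an arbitrary big movable curve to the $\mfk{l}_k$-optimizer. You merge these two steps by working with $\mfk{u}_k$ directly, and your lower bound avoids the $\R$-version entirely via the one-line observation $A\cdot C\geq\lambda(-K_Y\cdot C)\geq\lambda(k+1)$. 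One minor point: the claim $-K_Y\cdot H\geq 1$ needs $-K_Y\neq 0$, but you only actually use $-K_Y\cdot H\geq 0$ in the index estimate, so this is harmless (and the case $-K_Y=0$ is trivial anyway).
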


\begin{proof} First, we show that the $\lk_k$ limit agrees with the formula on the right of (\ref{eqn:lk_uk_asymptotics}). Assume without loss of generality $Y$ is smooth. Fix an $\eps > 0$ and choose a movable class $C_\eps$ that satisfies
\[\frac{A\cdot C_\eps}{-K_Y\cdot C_\eps} - \eps \leq \frac{A\cdot C}{-K_Y\cdot C} \quad\text{for any}\quad C \in \on{Mov}_1(Y)_\Z\]
Let $k_\eps=-K_Y\cdot C_\eps$. Then
\[-K_Y\cdot\lceil\tfrac{k+1}{k_*}\rceil C_\eps\geq k+1 \quad\text{so that}\quad\mfk{l}_k(Y,A)\leq A\cdot\lceil\tfrac{k+1}{k_*}\rceil C_\eps
\]
Hence
\[
\frac{\mathfrak{l}_k(Y,A)}{k} \leq \frac{A \cdot \lceil\tfrac{k+1}{k_\eps}\rceil C_\eps}{k} \leq \frac{k+1}{k}\cdot\frac{A \cdot C_\eps}{-K_Y \cdot C_\eps}+\frac{A\cdot C_\eps}{k}
\]
As $k\to\infty$ we find
\begin{equation} \label{eqn:asymptotics_ub}
\lim_{k\to\infty}\frac{\mfk{l}_k(Y,A)}{k}\leq\frac{A \cdot C_\eps}{-K_Y \cdot C_\eps}\leq\inf_{C\in\on{Mov}_1(Y)_\Z}\frac{A\cdot C}{-K_Y\cdot C}+\eps
\end{equation}
On the other hand, consider the analog of $\lk_k$ optimizing over movable $\R$-curves.
\[\mfk{l}_k^\R(Y,A):=\inf_{C\in\on{Mov}(Y)_\R}\{A\cdot C:-K_Y\cdot C\geq k+1\}\]
Choose a movable $\R$-curve $C$ with $-K_Y\cdot C\geq k+1$ and $A\cdot C\leq\mfk{l}_k^\R(Y,A)+\delta$. Then
\[\mfk{l}_k^\R(Y,A)\geq A\cdot C-\delta\geq(-K_Y\cdot C)\left(\frac{A\cdot C_\eps}{-K_Y \cdot C_\eps}-\eps\right)-\delta\geq(k+1)\left(\frac{A\cdot C_\eps}{-K_Y \cdot C_\eps}-\eps\right)-\delta\]
Since $\mfk{l}_k^\R(Y,A)\leq\mfk{l}_k(Y,A)$, the above equation implies that 
\begin{equation}\label{eqn:asymptotics_lb} \lim_{k\to\infty}\frac{\mfk{l}_k(Y,A)}{k}\geq\lim_{k\to\infty}\frac{(k+1)\left(\frac{A\cdot C_\eps}{-K_Y \cdot C_\eps}-\eps\right)-\delta}{k}=\frac{A\cdot C_\eps}{-K_Y \cdot C_\epsilon}-\eps \ge \inf_{C\in\on{Mov}(Y)_\Z}\frac{A\cdot C}{-K_Y\cdot C}-\eps\end{equation}
By taking $\eps\to0$ in (\ref{eqn:asymptotics_ub}) and (\ref{eqn:asymptotics_lb}), we acquire the desired formula for $\mathfrak{l}_k$. 

\vspace{3pt}

Second, we show that $\lk_k$ and $\uk_k$ limits coincide. Let $C_k$ be a sequence of moveable curves with $A \cdot C_k = \lk_k(Y,A)$. Let $B$ be an arbitrary big and moveable curve, so that $B_k := B + C_k$ is also big and moveable. In particular
\[-K_Y \cdot B_k \ge -K_Y \cdot C_k \ge k + 1\]
Therefore, $\uk_k(Y,A) \le A \cdot B_k$. It follows that
\[
\lim_{k \to \infty} \frac{\uk(Y,A)}{k} \le \lim_{k \to \infty} \frac{A \cdot B + A \cdot C_k}{k} = \lim_{k \to \infty} \frac{\lk_k(Y,A)}{k}
\]
The reverse inequality is obvious. Thus $\lk_k(Y,A)/k$ and $\uk_l(Y,A)/k$ have the same limit in $[0,\infty]$.
\end{proof}

Every toric surface has effective anticanonical divisor, and thus we have the following corollary.

\begin{lemma} \label{lem:toric_asymtotics_polytope} Let $\Omega\subseteq\R^2$ be a strongly convex rational-sloped polytope
. Then
\begin{equation} \label{eqn:lattice_asymp}
\lim_{k\to\infty}\frac{\mfk{l}_k(Y_\Sigma,A_\Omega)}{k}=\lim_{k\to\infty}\frac{\mfk{u}_k(Y_\Sigma,A_\Omega)}{k}= \underset{\rho \in \Sigma^+}{\on{min}} \frac{\|v_\rho\|^*_\Omega}{1 + v_{\rho,1} + v_{\rho,2}}
\end{equation}
\end{lemma}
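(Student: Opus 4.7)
The plan is to pass through Proposition \ref{prop:lk_uk_asymptotics} applied to a smooth toric resolution, then use the explicit description of $\on{Mov}_1$ from Lemma \ref{lem:strongly_convex_movable} to evaluate the resulting infimum in terms of cocharacter curves via the intersection formulas of \S \ref{subsec:cocharacter_curves}.

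First, choose a refinement $\wt{\Sigma}$ of $\Sigma$ corresponding to a toric resolution of singularities $\pi\colon Y_{\wt{\Sigma}} \to Y_\Sigma$ such that $\wt{\Sigma}$ is both smooth and strongly convex (both properties can be preserved under a standard star-subdivision process). Since $Y_\Sigma$ is toric, $-K_{Y_\Sigma} = \sum_{\rho \in \Sigma(1)} D_\rho$ is effective, and the same holds on $Y_{\wt{\Sigma}}$. By the blowup invariance of Proposition \ref{prop:blowup} we have $\mfk{l}_k(Y_\Sigma, A_\Omega) = \mfk{l}_k(Y_{\wt{\Sigma}}, \pi^*A_\Omega)$ and similarly for $\mfk{u}_k$, so Proposition \ref{prop:lk_uk_asymptotics} yields
\[
\lim_{k\to\infty}\frac{\mfk{l}_k(Y_\Sigma,A_\Omega)}{k} = \lim_{k\to\infty}\frac{\mfk{u}_k(Y_\Sigma,A_\Omega)}{k} = \inf_{C \in \on{Mov}_1(Y_{\wt{\Sigma}})_\Z} \frac{\pi^*A_\Omega \cdot C}{-K_{Y_{\wt{\Sigma}}} \cdot C}.
\]

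Next, by Lemma \ref{lem:strongly_convex_movable}, any $C \in \on{Mov}_1(Y_{\wt{\Sigma}})_\Z$ can be written as a non-negative integer combination $C = \sum_{\rho \in \wt{\Sigma}^+(1)} b_\rho [C_\rho]$. Both $\pi^*A_\Omega \cdot (-)$ and $-K_{Y_{\wt{\Sigma}}} \cdot (-)$ are linear, and the latter is strictly positive on the generators by Corollary \ref{cor:cocharacter_index}, so the ratio is a weighted average of the ratios $\frac{\pi^*A_\Omega \cdot C_\rho}{-K_{Y_{\wt{\Sigma}}} \cdot C_\rho}$ and its infimum is attained at some single ray $\rho \in \wt{\Sigma}^+(1)$. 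By Corollaries \ref{cor:cocharacter_area} and \ref{cor:cocharacter_index}, together with the observations that $0 \in \Omega$ (a consequence of the convexity of $\widehat{\Omega}$) and $\Omega \subset [0,\infty)^2$ forces $\|-v_\rho\|_\Omega^* = 0$ for $\rho \in \wt{\Sigma}^+(1)$, we obtain
\[
\pi^*A_\Omega \cdot C_\rho = \|v_\rho\|_\Omega^*, \qquad -K_{Y_{\wt{\Sigma}}} \cdot C_\rho = 1 + v_{\rho,1} + v_{\rho,2}.
\]

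It remains to show that restricting the minimum from $\wt{\Sigma}^+(1)$ to $\Sigma^+(1)$ does not change its value. Any new ray $\rho \in \wt{\Sigma}^+(1) \setminus \Sigma^+(1)$ has generator $v_\rho = a v_{\rho_1} + b v_{\rho_2}$ for adjacent $\rho_1, \rho_2 \in \Sigma^+(1)$ and integers $a, b \geq 1$. Using that $\|\cdot\|_\Omega^*$ is linear on each cone of $\Sigma$, setting $N_i = \|v_{\rho_i}\|_\Omega^*$, $M_i = v_{\rho_i,1}+v_{\rho_i,2}$, and $r_i = \tfrac{N_i}{1+M_i}$ (and assuming $r_1 \le r_2$), a direct manipulation shows
\[
\frac{aN_1 + bN_2}{1 + aM_1 + bM_2} - r_1 = \frac{r_1(a-1) + r_2 b + (r_2 - r_1)bM_2}{1 + aM_1 + bM_2} \geq 0,
\]
so the new rays yield larger ratios. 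This reduces the infimum to $\min_{\rho \in \Sigma^+(1)} \frac{\|v_\rho\|_\Omega^*}{1 + v_{\rho,1} + v_{\rho,2}}$, giving (\ref{eqn:lattice_asymp}). The main technical obstacle is the last algebraic comparison, but otherwise the result is an assembly of the tools developed in \S \ref{sec:algebraic_and_toric_geometry} and \S \ref{subsec:algebraic_formulation}.
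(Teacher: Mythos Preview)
Your argument follows essentially the same route as the paper's proof: apply Proposition \ref{prop:lk_uk_asymptotics} on a smooth strongly convex resolution, decompose movable classes via Lemma \ref{lem:strongly_convex_movable}, reduce the infimum to single cocharacter curves by a weighted-average argument, evaluate using Corollaries \ref{cor:cocharacter_area}--\ref{cor:cocharacter_index}, and finally pass from $\wt{\Sigma}^+$ back to $\Sigma^+$ using linearity of $\|\cdot\|_\Omega^*$ on cones of $\Sigma$. Your explicit justification of $\|-v_\rho\|_\Omega^*=0$ and the direct algebraic verification of the last inequality are minor stylistic variants of what the paper does.

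There is, however, a genuine gap in your final step (and the paper's own argument shares it). You assert that every new ray $\tau\in\wt{\Sigma}^+(1)\setminus\Sigma^+(1)$ has $v_\tau=av_{\rho_1}+bv_{\rho_2}$ with $\rho_1,\rho_2$ \emph{adjacent in $\Sigma^+(1)$} and $a,b\ge 1$. But a ray of $\wt{\Sigma}^+$ may lie in a maximal cone of $\Sigma$ bounded by a ray of $\Sigma^+$ and one of the rays $-e_1,-e_2$; in that case no such expression with both $\rho_i\in\Sigma^+$ exists. Concretely, take $\Omega=\{x,y\ge 0,\ x+3y\le c\}$, so $\Sigma^+(1)=\{(1,3)\}$. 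Resolving the cone $\on{Cone}((1,3),-e_1)$ introduces the ray $(0,1)\in\wt{\Sigma}^+$, and one computes
\[
\frac{\|(0,1)\|_\Omega^*}{1+0+1}=\frac{c/3}{2}=\frac{c}{6}
\quad<\quad
\frac{\|(1,3)\|_\Omega^*}{1+1+3}=\frac{c}{5},
\]
so the minimum over $\wt{\Sigma}^+$ is strictly smaller than the minimum over $\Sigma^+$. In your algebraic check this manifests as follows: writing $(0,1)=a(1,3)+b(-1,0)$ gives $a=b=\tfrac13$, so the hypothesis $a\ge 1$ fails. Thus the reduction from $\wt{\Sigma}^+$ to $\Sigma^+$ is not valid in general, and the formula that is actually established by your argument (and by the paper's) is the minimum over $\wt{\Sigma}^+$, consistent with Lemma \ref{lem:toric_asymtotics}.
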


\begin{proof} By Proposition \ref{prop:lk_uk_asymptotics}, the limits on the lefthand side exist and are given by
\begin{equation} \label{eqn:asymp_res}
\inf_{C\in\on{Mov}_1(Y_{\wt{\Sigma}})_\Z}\frac{\pi^*A_\Omega\cdot C}{-K_{\wt{\Sigma}}\cdot C}
\end{equation}
Here we have equivariantly resolved singularities via $\pi\colon Y_{\wt{\Sigma}} \to Y_\Sigma$ (if necessary). By Lemma \ref{lem:strongly_convex_movable}, for any $C\in\on{Mov}_1(Y_{\wt{\Sigma}})_\Z$ we can write
\[
\frac{\pi^*A_\Omega\cdot C}{-K_{\wt{\Sigma}}\cdot C} = \frac{\sum_\rho k_\rho (\pi^*A_\Omega \cdot C_\rho)}{\sum_\rho k_\rho (K_{\wt{\Sigma}} \cdot C_\rho)} \quad\text{where}\quad C = \sum_{\rho \in \wt{\Sigma}^+} k_\rho \cdot C_\rho 
\]
For any positive sequences $a_1,\dots,a_n$ and $b_1,\dots,b_n$ of real numbers, we have
\begin{equation} \label{eqn:elementary_sum_fact_1}
\sum_i a_i = \sum_i \frac{a_i}{b_i} \cdot b_i \ge \on{min}_i\big(\frac{a_i}{b_i}\big) \cdot \sum_i b_i\end{equation}
Applying (\ref{eqn:elementary_sum_fact_1}) and Corollaries \ref{cor:cocharacter_area}-\ref{cor:cocharacter_index}, we find that (\ref{eqn:asymp_res}) can be alternatively written as
\begin{equation} \label{eqn:lattice_asymp_blowup}
\inf_{C\in\on{Mov}_1(Y_{\wt{\Sigma}})_\Z}\frac{\pi^*A_\Omega\cdot C}{-K_{\wt{\Sigma}}\cdot C} = \underset{\rho \in \wt{\Sigma}^+}{\on{min}}\big( \frac{\pi^*A_\Omega\cdot C_\rho}{-K_{\wt{\Sigma}}\cdot C_\rho} \big) = \underset{\rho \in \wt{\Sigma}^+}{\on{min}} \frac{\|v_\rho\|^*_\Omega}{1 + v_{\rho,1} + v_{\rho,2}}\end{equation}

Finally, we observe that the $\Omega$-norm is linear on the cone generated by any adjacent pair of rays in $\Sigma$. Therefore, for any ray $\tau \in \wt{\Sigma}^+$, we may write $v_\tau = a \cdot v_\rho + b \cdot v_\sigma$ for $\rho,\sigma \in \Sigma^+$ and positive integers $a,b > 0$, we may apply (\ref{eqn:elementary_sum_fact_1}) to see that
\[
\frac{\|v_\tau\|^*_\Omega}{1 + v_{\tau,1} + v_{\tau,2}} \ge \frac{a\|v_\rho\|^*_\Omega + b\|v_\sigma\|^*_\Omega}{2 + av_{\rho,1} + bv_{\sigma,1} + av_{\rho,2} + bv_{\sigma,2}} \ge \on{min}\Big(\frac{\|v_\rho\|^*_\Omega}{1 + v_{\rho,1} + v_{\rho,2}},\frac{\|v_\sigma\|^*_\Omega}{1 + v_{\sigma,1} + v_{\sigma,2}}\Big)
\]
Thus the minimum in (\ref{eqn:lattice_asymp_blowup}) can be taken over $\Sigma^+$. This concludes the proof. \end{proof}

Finally, the formula in Lemma \ref{lem:toric_asymtotics_polytope} can be generalized to apply to any strongly convex moment domain $\Omega$, to a certain infimum over positive lattice points.

\begin{lemma} \label{lem:toric_asymtotics} Let $\Omega\subseteq\R^2$ be a strongly convex moment domain. Then
\begin{equation} \label{eqn:lattice_asymp_domain}
\lim_{k\to\infty}\frac{\mfk{l}_k(\Omega)}{k}=\lim_{k\to\infty}\frac{\mfk{u}_k(\Omega)}{k}=\inf_{(w_1,w_2)\in\Z^2_{\geq0} \setminus 0}\frac{\|(w_1,w_2)\|_\Omega^*}{1+w_1+w_2}
\end{equation}
\end{lemma}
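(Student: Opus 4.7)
The plan is to prove both limits equal
\[
L \;:=\; \inf_{w \in \Z^2_{\geq 0}\setminus 0} \frac{\|w\|_\Omega^*}{1+w_1+w_2}
\]
by working directly with the lattice formulas of Definition \ref{def:lattice_def_lk_uk}, rather than by polytope approximation and Lemma \ref{lem:toric_asymtotics_polytope}. The two key inputs are the trivial equality $\|-e_i\|_\Omega^* = 0$ for any moment domain $\Omega \subset [0,\infty)^2$, and the identity $\|v\|_\Omega^* = \|v^+\|_\Omega^*$ for all $v \in \Z^2$, where $v^+ = (\max(v_1,0),\max(v_2,0))$, which holds for strongly convex $\Omega$.

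For the upper bound, I would fix $w \in \Z^2_{\geq 0}\setminus 0$ and an integer $j \geq 1$, and form the sequence consisting of $j$ copies of $w$ together with $jw_1$ copies of $-e_1$ and $jw_2$ copies of $-e_2$. This sequence sums to zero, has length $j(1+w_1+w_2)$, and total $\Omega$-norm $j\|w\|_\Omega^*$. Choosing $j = \lceil (k+1)/(1+w_1+w_2)\rceil$ places it in $L(k)$, so $\lk_k(\Omega) \leq j\|w\|_\Omega^*$. Dividing by $k$, letting $k \to \infty$, and then taking the infimum over $w$ yields $\limsup_k \lk_k(\Omega)/k \leq L$. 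The same construction lies in $U(k)$ whenever both coordinates of $w$ are strictly positive; in the edge case $w = e_i$ one appends the pair $(e_{3-i}, -e_{3-i})$, which keeps the sum zero, places the sequence in $U(k)$, and adds only the bounded constant $\|e_{3-i}\|_\Omega^*$ to the total norm, preserving the same asymptotic bound for $\uk_k$.

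For the matching lower bound, I would use strong convexity to establish $\|v\|_\Omega^* = \|v^+\|_\Omega^*$ by a short sign case analysis: strong convexity forces the axial point $a_i(\Omega) e_i$ to lie in $\Omega$ and realize $\max_\Omega u_i$, so a maximizer of $v \cdot u$ over $\Omega$ can always be chosen with $u_i = 0$ whenever $v_i < 0$. Applied termwise to any $\bar v = (v_1, \ldots, v_m) \in L(k)$, this identity together with the definition of $L$ gives
\[
\sum_i \|v_i\|_\Omega^* \;=\; \sum_{v_i^+ \neq 0} \|v_i^+\|_\Omega^* \;\geq\; L\,(m^+ + w_1 + w_2),
\]
where $m^+ := \#\{i : v_i^+ \neq 0\}$ and $w := \sum_i v_i^+ \in \Z^2_{\geq 0}$. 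Since $\sum_i v_i = 0$ forces $\sum_i v_i^- = w$ (where $v_i^- := v_i^+ - v_i \in \Z^2_{\geq 0}$), each index $i$ with $v_i \leq 0,\, v_i \neq 0$ contributes at least one to $w_1 + w_2$, giving $w_1 + w_2 \geq m^0 := \#\{i : v_i \leq 0\}$. Hence $m^+ + w_1 + w_2 \geq m^+ + m^0 = m \geq k+1$, which produces $\lk_k(\Omega) \geq L(k+1)$ and therefore $\liminf_k \lk_k(\Omega)/k \geq L$. The inclusion $U(k) \subset L(k)$ gives $\uk_k \geq \lk_k$ and transfers the lower bound to $\uk_k$.

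The main obstacle is the identity $\|v\|_\Omega^* = \|v^+\|_\Omega^*$: this is precisely where strong convexity of $\Omega$ enters the argument, through the geometric fact that $\max_\Omega u_i$ is attained on the $i$-th coordinate axis. Once this identity is established, both the upper and lower bounds reduce to elementary manipulations of the lattice data, and the equality $\lim \lk_k/k = \lim \uk_k/k = L$ follows.
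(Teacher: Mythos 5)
Your argument is correct and takes a genuinely different route from the paper. The paper proves this in three stages: first, an asymptotics result for movable curves on a pseudo-polarized smooth surface with effective anticanonical (Proposition~\ref{prop:lk_uk_asymptotics}), then a translation to the lattice formula for rational strongly convex polytopes via cocharacter curves and the fan formulation (Lemma~\ref{lem:toric_asymtotics_polytope}), and finally a $C^0$-approximation of a general strongly convex moment domain by rational polytopes. You instead work entirely with the lattice definitions, hinging on the elementary projection identity $\|v\|_\Omega^* = \|v^+\|_\Omega^*$ for strongly convex $\Omega$; this identity is easily verified since for any $u \in \Omega$ one has $(u_1,0), (0,u_2) \in \Omega$ by convexity of $\widehat{\Omega}$, so dropping a coordinate with negative weight never decreases the supremum, and the other inequality is trivial since the dropped term is nonpositive. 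Your lower bound then distributes $L$ across the positive parts $v_i^+$ and closes the gap $m^+ + w_1 + w_2 \ge m$ with the observation that each purely nonpositive $v_i$ contributes at least $1$ to $w_1 + w_2$; the upper bound constructs near-optimal sequences by padding $j$ copies of $w$ with copies of $-e_1, -e_2$ (which cost nothing in $\Omega$-norm), with the $e_i$ edge case patched to land in $U(k)$ by appending $(e_{3-i},-e_{3-i})$ at bounded cost. The comparison: your proof is self-contained, avoids all of the toric and birational machinery, handles non-polytopal domains directly without an approximation step, and as a byproduct shows the limit exists (rather than only identifying the limit assuming it exists). The paper's route, on the other hand, buys the more general Proposition~\ref{prop:lk_uk_asymptotics} for non-toric pseudo-polarized surfaces with effective anticanonical, which has independent interest and is reused elsewhere. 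Both are valid; yours is the shorter path to Lemma~\ref{lem:toric_asymtotics} itself.
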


 \begin{proof} First, assume that $\Omega$ is a rational polytope. Then any vector $(w_1,w_2) \in \Z^2 \setminus 0$ is in the cone generated by two vectors $v_\rho$ and $v_\sigma$ generated by $1$-dimensional cones $\rho,\sigma \in \Sigma$. It follows that
\begin{equation} \label{eqn:lattice_asymp_domain}
\frac{\|(w_1,w_2)\|_\Omega^*}{1 + w_2 + w_2} \ge \on{min}\Big(\frac{\|v_\rho\|^*_\Omega}{1 + v_{\rho,1} + v_{\rho,2}},\frac{\|v_\sigma\|^*_\Omega}{1 + v_{\sigma,1} + v_{\sigma,2}}\Big)
\end{equation}
Thus the righthand side of (\ref{eqn:lattice_asymp_domain}) is bounded below by the righthand side of (\ref{eqn:lattice_asymp}). The bound in the other direction is obvious, so the result follows from Lemma \ref{eqn:lattice_asymp}.

\vspace{3pt}

In the general case, we carry out an approximation argument. Pick a sequence of strongly convex polytopes $\Omega_i$ that satisfy
\[\Omega \subset \Omega_i \subset (1 + 1/i) \cdot \Omega\]
Then by applying the scaling property of the $\Omega$-norm, we have
\[
\lim_{k\to\infty}\frac{\mfk{l}_k(\Omega)}{k} \le \lim_{i \to \infty} \Big(\lim_{k\to\infty}\frac{\mfk{l}_k(\Omega_i)}{k}\Big) = \lim_{i \to \infty} \Big(\inf_{(w_1,w_2)\in\Z^2_{\geq0} \setminus 0}\frac{\|(w_1,w_2)\|_{\Omega_i}^*}{1+w_1+w_2}\Big)\]
\[ \le \lim_{i \to \infty} (1 + 1/i) \cdot \Big(\inf_{(w_1,w_2)\in\Z^2_{\geq0} \setminus 0}\frac{\|(w_1,w_2)\|_{\Omega}^*}{1+w_1+w_2}\Big) = \inf_{(w_1,w_2)\in\Z^2_{\geq0} \setminus 0}\frac{\|(w_1,w_2)\|_{\Omega}^*}{1+w_1+w_2}
\]
The reverse inequality can be proven similarly, and this concludes the argument.\end{proof}

\begin{remark} Much of this discussion generalizes to any dimension if we replace $\mathfrak{u}_k$ with an analogous minimum over the cone of big and movable curves in $Y$ (or of a resolution). However, it is not clear if these higher dimensional asymptotics have any bearing on the asymptotics of the corresponding capacities.\end{remark}

\section{Lower bounds via RSFT axioms} \label{sec:lower_bounds_via_RSFT} In this section, we demonstrate the main lower bound in this paper, Theorem \ref{thm:main_lowerbound}. 

\subsection{Cosphere bundles} \label{subsec:cosphere_bundles} Let $L$ be a closed manifold with a metric $g$. The unit cosphere bundle $S^*L \subset T^*L$ of $L$ is a contact manifold with contact form $\alpha = \lambda|_L$ given by the restriction of the standard Liouville form. The Reeb orbits $\gamma$ of $S^*L$ are in bijection with the geodesics of $(L,g)$.

\vspace{3pt}

A Reeb orbit in this setting has three associated integer invariants: the Maslov, Morse and Conley-Zehnder indices. 

\begin{definition} \label{def:Maslov} The \emph{Maslov index} $\mu(\gamma,\tau)$ of $\gamma$ is the Maslov index of the loop of Lagrangians
\[L_\tau \qquad\text{given by}\qquad L_\tau(t) := \tau_{\gamma(t)}(T_{\gamma(t)}L) \subset \C^n\]\end{definition}

\begin{definition} \label{def:Morse} The \emph{Morse index} $I(\gamma)$ is the Morse index of the corresponding geodesic $\pi \circ \gamma:S^1 \to L$ as a critical points of the energy functional $E_g$ on the free loop space $\Lambda L$.
\end{definition}

\noindent The Conley-Zehnder index $\on{CZ}(\gamma,\tau)$ is defined in \cite{v1990} in the general case, i.e. for Reeb orbits whose linearized return map has a $1$-eigenvalue. We will not review the definition here.

\vspace{3pt}

These three invariants are related by the following simple formula. Note that this result does not require any non-degeneracy hypotheses on the orbit.

\begin{lemma}[Viterbo, \cite{v1990}] \label{lem:Morse_vs_CZ} Let $\gamma$ be a Reeb orbit given by the lift of a closed geodesic $q$. Then
\[\on{CZ}(\gamma,\tau) + \mu(\gamma,\tau) = I(\gamma)\] 
\end{lemma}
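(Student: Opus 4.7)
The plan is to identify both sides of the formula with intersection numbers of Lagrangian paths and to absorb the choice of trivialization into the Maslov index of $L_\tau$. Work on the contact distribution $\xi = \ker \alpha$ pulled back to $\gamma$, which is a symplectic vector bundle of rank $2n-2$ over $S^1$. The cotangent bundle carries a canonical Lagrangian distribution $V \subset \xi$ given by the vertical (fiber) direction after quotienting out the Reeb line; combined with a horizontal complement from the Levi-Civita connection, this yields a natural \emph{geometric} Lagrangian splitting $\xi|_\gamma = H \oplus V$ that exists before any choice of $\tau$ is made.

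First, I will trivialize $\xi|_\gamma$ using this geometric splitting and denote the resulting Conley--Zehnder index by $\on{CZ}(\gamma,\mathrm{geo})$. The linearized Reeb flow $V_t := d\phi_t(V_0)$ is a Lagrangian path, and its intersections with $V_0$ at times $t \in (0,T)$ are in bijection, with the correct multiplicities, with the conjugate points of the underlying geodesic $\pi \circ \gamma$. By the classical Morse index theorem for closed geodesics (which extends to the degenerate case via Robbin--Salamon crossing forms), the signed intersection count equals $I(\gamma)$. On the other hand, in Viterbo's definition of $\on{CZ}$ for possibly degenerate orbits, the geometric trivialization makes $\on{CZ}(\gamma,\mathrm{geo})$ compute the very same intersection number of $V_t$ with the reference Lagrangian $V_0$. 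Hence $\on{CZ}(\gamma,\mathrm{geo}) = I(\gamma)$.

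Second, I will compare the geometric trivialization to the trivialization $\tau$. Under the identification $\xi|_\gamma \simeq \C^{n-1} \oplus \C$ coming from the geometric splitting (horizontal versus vertical), the $\tau$-trivialization differs from the geometric one by a loop of symplectic automorphisms, and the image of the horizontal Lagrangian under $\tau$ is precisely the loop $L_\tau(t) = \tau_{\gamma(t)}(T_{\gamma(t)} L) \subset \C^n$ of Definition \ref{def:Maslov}. By the standard Maslov-shift axiom for the Conley--Zehnder index under change of symplectic trivialization, one obtains
\[
\on{CZ}(\gamma,\tau) \;=\; \on{CZ}(\gamma,\mathrm{geo}) \;-\; \mu(\gamma,\tau).
\]
Substituting the equality $\on{CZ}(\gamma,\mathrm{geo}) = I(\gamma)$ from the first step gives the formula.

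The main obstacle is that Reeb orbits on the cosphere bundle always have $1$ as a Floquet eigenvalue (the underlying geodesic lives in an $S^1$-Morse--Bott family), so the linearized return map is degenerate and one must work with Robbin--Salamon's extension of $\on{CZ}$ throughout. Verifying that the endpoint/boundary crossings of $V_t$ at $V_0$ match the degenerate Morse index contributions (and that sign conventions between the Morse index theorem, Robbin--Salamon's half-integer crossings, and Viterbo's $\on{CZ}$ all align) is the delicate bookkeeping step; once that is done, the three-step outline above collapses to the stated identity.
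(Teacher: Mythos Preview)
The paper does not prove this lemma at all: it is stated with attribution to Viterbo \cite{v1990} and used as a black box. So there is no in-paper argument to compare your proposal against.

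That said, your outline is the standard route to this identity and is essentially correct in spirit: use the vertical Lagrangian as a reference, identify the Robbin--Salamon index of the linearized Reeb flow relative to the vertical with the Morse index of the geodesic via the Morse index theorem, and then account for the change of trivialization by the Maslov cocycle. One point to tidy up: the contact distribution $\xi$ on $S^*L$ has real rank $2n-2$, so as a complex bundle it is $\C^{n-1}$, not $\C^{n-1}\oplus\C$; however, Definition~\ref{def:Maslov} takes $L_\tau(t)=\tau_{\gamma(t)}(T_{\gamma(t)}L)\subset\C^n$, i.e.\ the Maslov index is being computed in the full symplectic bundle $T(T^*L)|_\gamma\simeq\C^n$ rather than in $\xi$. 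To make your argument literally match the statement you must either (i) work in the full tangent bundle and observe that the extra Reeb/radial directions contribute trivially to both $\on{CZ}$ and $\mu$, or (ii) check that the Maslov index of $T_{\gamma(t)}L$ in $\C^n$ equals that of its projection to $\xi$ in $\C^{n-1}$. Either is routine, but as written your dimension bookkeeping slips between the two pictures.
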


Turning to the relevant situation for this paper, we consider the torus $T^n$ with the standard flat metric and the corresponding cosphere bundle $S^*T^n$. 

\begin{lemma} \label{lem:geodesic_Morse_torus} The Morse index $I(\gamma)$ of any Reeb orbit in $S^*T^n$ is zero.
\end{lemma}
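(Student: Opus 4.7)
The plan is to compute the Hessian of the energy functional at the geodesic underlying $\gamma$ directly, using the fact that $T^n$ with the standard flat metric has identically vanishing Riemann curvature tensor. Recall that the Morse index of a closed geodesic $q:S^1\to T^n$ as a critical point of the energy $E_g$ on the free loop space $\Lambda T^n$ is, by the standard second variation formula, equal to the index of the \emph{index form}
\[
I(X,Y) \;=\; \int_0^T \bigl(\langle \nabla_{\dot q} X,\, \nabla_{\dot q} Y\rangle - \langle R(X,\dot q)\dot q,\, Y\rangle\bigr)\, dt
\]
acting on smooth periodic vector fields $X$ along $q$.

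First, I would identify the Reeb orbits of $S^*T^n$: under the standard identification $T^*T^n \simeq T^n \times \R^n$, the geodesics for the flat metric are the constant-speed loops $q(t) = q_0 + t v$ with $v$ a lattice vector, and $\gamma$ is the cotangent lift of such a $q$. Next, I would invoke the key geometric input: the flat metric on $T^n$ has $R \equiv 0$ (this is immediate since the metric pulls back to the Euclidean metric on $\R^n$, which is flat). Plugging this into the index form yields
\[
I(X,X) \;=\; \int_0^T |\nabla_{\dot q} X|^2\, dt \;\geq\; 0
\]
for every periodic vector field $X$ along $q$. Hence the Hessian of $E_g$ at $q$ is positive semi-definite, so its Morse index vanishes, i.e.\ $I(\gamma) = 0$.

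The only point requiring care is the distinction between the Morse index and the nullity: the form $I(X,X)$ has a large kernel (given by the parallel vector fields along $q$, reflecting the translation symmetry of $T^n$), but these contribute to the \emph{nullity} and not to the negative index. Since Definition \ref{def:Morse} is the Morse index proper (the maximal dimension of a subspace on which $I$ is negative definite), and $I$ is non-negative on the entire tangent space $T_q \Lambda T^n$, we conclude $I(\gamma) = 0$. I do not anticipate any serious obstacle here; the result is essentially the statement that flat tori have no conjugate points, packaged in Morse-theoretic language.
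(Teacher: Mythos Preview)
Your proof is correct and in fact more direct than the paper's. You compute the index form from the second variation formula and use $R\equiv 0$ on the flat torus to conclude that $I(X,X)=\int_0^T|\nabla_{\dot q}X|^2\,dt\ge 0$, whence the negative index vanishes; your remark distinguishing index from nullity is exactly right. The paper instead invokes the Morse index theorem for closed geodesics, writing $I(\gamma)$ as the number of conjugate points plus a concavity term, then uses that nonpositive curvature kills the conjugate points and that the Morse--Bott family of geodesics forces the concavity to vanish via a bound $\text{concavity}(q)\le n-1-\text{null}(q)$. Your route avoids this machinery entirely and is more self-contained; the paper's route has the minor advantage of gesturing toward a statement valid for any nonpositively curved metric, but for the flat torus the direct computation you give is both shorter and cleaner.
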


\begin{proof} Let $q = \pi \circ \gamma$ be the geodesic corresponding to $\gamma$. The Morse index theorem \cite[Thm 2.5.14]{k2011} states that
\begin{equation}
I(\gamma) = \# \{\text{conjugate points of }q\} + \text{concavity}(q)
\end{equation}
The geodesics of a metric of nowhere positive curvature has no conjugate points \cite[Thm 2.6.2]{k2011}. Furthermore, the concavity satisfies
\[
\text{concavity}(q) \le n - 1 - \text{null}(q)
\]
Here $\on{null}(q)$ is one less than the dimension of the null-space of the Hessian of the energy functional $E_g$ on the free loop space $\Lambda T^n$. The geodesics of $T^n$ come in $(n-1)$-dimensional Morse-Bott families, and thus $\on{null}(q) = 0$. Therefore
\[
I(\gamma) = \text{concavity}(q) \le n - 1 - \text{null}(q) = 0
\] 
Since any Morse index is non-negative, we acquire the desired result.\end{proof}

\subsection{Free domains} \label{subsec:free_domain} We now discuss the implications of \S \ref{subsec:cosphere_bundles} for free toric domains. Fix a smooth convex domain in $\R^n$ containing $0$.
\[\Omega \subset \R^n \qquad \text{with}\qquad 0 \in \R^n\]
This convex domain determines a Liouville domain within $T^*T^n$, given as a subset of $T^n \times \R^n$ by
\[D_\Omega^*T^n := T^n \times \Omega \qquad\text{and}\qquad S^*_\Omega T^n := T^n \times \partial\Omega\]

The Reeb orbits on $S^*_\Omega T^n$ are in bijection with the geodesics of the unique Finsler metric on $T^n$ with unit disk bundle $D_\Omega^*T^n$. In fact, we have
\begin{lemma} \label{lem:mb_family_orbits_torus} The (unparametrized) Reeb orbits of $S^*_\Omega T^n$ come in Morse-Bott families
\[S_a(\Omega) \simeq T^{n-1} \qquad\text{for}\qquad a \in H_1(T^n;\Z)\text{ with }a \neq 0\]
\end{lemma}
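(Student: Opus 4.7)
The plan is to use the translation symmetry of $T^n$ to identify all Reeb orbits explicitly. First I would observe that the Reeb flow on $S^*_\Omega T^n$ is the cogeodesic flow of the translation-invariant Finsler metric on $T^n$ whose dual unit ball at each basepoint is $\Omega$. The $T^n$-action on itself lifts to a Hamiltonian action on $T^*T^n$ that preserves $S^*_\Omega T^n$ and commutes with the Reeb vector field, so Reeb orbits come in $T^n$-orbits.

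Next I would pass to the universal cover $\R^n \to T^n$. A translation-invariant Finsler metric on $\R^n$ is by definition a norm (the polar dual of $\|\cdot\|^*_\Omega$), and its unit-speed geodesics are straight lines. A closed geodesic on $T^n$ in class $a \in H_1(T^n;\Z) = \Z^n$ therefore lifts to a straight segment from some $x \in \R^n$ to $x + a$, and any starting point yields such a segment; conversely no Reeb orbit can have $a = 0$, since a null-homologous closed geodesic in a normed vector space must be constant. The parametrized orbits in class $a$ are thus canonically parametrized by $T^n$, and two starting points yield the same unparametrized orbit if and only if they differ by translation along the one-parameter subgroup $S^1_a \subset T^n$ generated by $a$. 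Taking the quotient gives
\[
S_a(\Omega) \;\simeq\; T^n / S^1_a \;\simeq\; T^{n-1}.
\]

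Finally I would verify the Morse--Bott condition by checking that the linearized return map along any $\gamma \in S_a(\Omega)$ has $1$-eigenspace of dimension exactly $n-1$. The free $T^n$-orbit through $\gamma$ in $S^*_\Omega T^n$ has dimension $n$; differentiating this action produces $n-1$ linearly independent vectors in the contact hyperplane $\xi_\gamma$ that are preserved by the linearized flow, providing $n-1$ independent $1$-eigenvectors, which matches the dimension of $S_a(\Omega)$. The main obstacle will be ruling out extraneous $1$-eigenvectors in the symplectic complement. For this I would reduce by the $T^n$-symmetry: the reduced Reeb dynamics lives on a two-dimensional symplectic slice, where the linearization is governed by the Hessian of the Finsler dual norm on a cotangent fiber. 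Smooth strong convexity of $\partial\Omega$ (which can be arranged by a small perturbation, as noted in the surrounding discussion) makes this Hessian positive definite transverse to the energy level, so the reduced linearized return map has eigenvalues strictly bounded away from $1$, completing the Morse--Bott verification.
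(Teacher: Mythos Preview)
Your approach is essentially the same as the paper's: both identify the Reeb orbits explicitly via the translation invariance, writing them as $\gamma_{a,q}(t) = (q + ta/L, p)$ with $p \in \partial\Omega$ the point where $\langle p,a\rangle = \|a\|^*_\Omega$, and then observe that the parametrized orbits in class $a$ form a $T^n$-family which descends to a $T^{n-1}$-family after quotienting by reparametrization. The paper in fact does not give a formal proof of this lemma at all; it simply records the explicit formula and asserts the Morse--Bott property, whereas you go further and sketch a verification of the Morse--Bott nondegeneracy via the linearized return map and the Hessian of the dual norm. That extra care is fine and correct in spirit, though note that the strict convexity of $\partial\Omega$ you invoke is a genuine hypothesis here (the paper is tacitly assuming $\Omega$ smooth with smooth boundary in this subsection, and handles the general case by approximation later).
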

\noindent Providing an explicit description of the orbits in these Morse-Bott families is simple. To start, choose a non-zero homology class and point
\[a \in H^1(T^n;\Z) \simeq \Z^n \qquad\text{and}\qquad q \in T^n\]
Let $L = \|a\|^*_\Omega$ and let $p \in \partial \Omega$ satisfy $\langle p,a\rangle = \|a\|^*_\Omega$. Then there is a Reeb orbit of period $L$
\[\gamma_{a,q}:\R/L\Z \to S^*_\Omega L \quad\text{given by}\quad \gamma_{a,q}(t) = (q + ta/L, p)\]
Note that the homology class of $\gamma_{a,q}$ is $a$ and that the orbits $\gamma_{a,q}$ for a fixed $a$ and varying point $p$ form a Morse-Bott $T^n$-family of parametrized orbits. Quotienting by the parametrization yields the $T^{n-1}$-family of Lemma \ref{lem:mb_family_orbits_torus}.

\vspace{3pt}

The Conley-Zehnder indices of these Morse-Bott families can be calculated as follows. 

\begin{lemma} \label{lem:CZ_of_MB_families} Let $\Gamma$ be a set of Reeb orbits of $S^*_\Omega T^n$ bounding an immersed surface $\Sigma \subset D^*_\Omega T^n$ and let $\tau$ be a trivialization of $\xi$ over $\Gamma$ that extends to a trivialization of $T(D^*T^n)$ over $\Sigma$. Then
\[\on{CZ}(\Gamma,\tau) = 0\]
\end{lemma}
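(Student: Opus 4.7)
The strategy is to compute $\on{CZ}(\Gamma,\tau)$ by first reducing to the canonical trivialization coming from the global framing of $T^*T^n$, where Viterbo's formula makes the computation transparent, and then observing that the total index is independent of the trivialization on $\partial\Sigma$ provided it extends over $\Sigma$.

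First, let $\tau_0$ denote the canonical trivialization of $T(T^*T^n)$ induced by the global identification $T^*T^n\simeq T^n\times\R^{2n}$. Under $\tau_0$, the cotangent fiber Lagrangian $T_qT^n\subset T(T^*T^n)$ at each point $q\in T^n$ corresponds to the constant Lagrangian $\R^n\subset\C^n$. Consequently, for any Reeb orbit $\gamma$ the loop of Lagrangians $L_{\tau_0}(t)=\tau_0(T_{\pi(\gamma(t))}T^n)$ is constant, so $\mu(\gamma,\tau_0)=0$ by Definition \ref{def:Maslov}.

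Next, I would establish the Finsler analog of Lemma \ref{lem:geodesic_Morse_torus}, namely that $I(\gamma)=0$ for every Reeb orbit on $S^*_\Omega T^n$. Since the Finsler norm dual to $\Omega$ is translation-invariant on $T^n$, its geodesics lift to straight lines in the universal cover $\R^n$ and globally minimize Finsler length in their free homotopy class. Hence they realize the global minimum of the energy functional on the corresponding component of the free loop space, and the Morse index vanishes. By Viterbo's formula (Lemma \ref{lem:Morse_vs_CZ}), $\on{CZ}(\gamma,\tau_0)=I(\gamma)-\mu(\gamma,\tau_0)=0$, and so $\on{CZ}(\Gamma,\tau_0)=0$.

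Finally, I would compare $\tau$ and $\tau_0$ via a change-of-trivialization argument. Modifying a trivialization of $\xi|_\gamma$ by a loop of unitary transformations whose determinant has winding number $d$ shifts the CZ index by $2d$. Summed over the components of $\Gamma$, the total winding equals
\[
c_1\big(T(D^*T^n)|_\Sigma,\tau\big)-c_1\big(T(D^*T^n)|_\Sigma,\tau_0\big),
\]
and both relative Chern classes vanish: $\tau$ extends over $\Sigma$ by hypothesis and $\tau_0$ extends globally over $T^*T^n$. Thus $\on{CZ}(\Gamma,\tau)=\on{CZ}(\Gamma,\tau_0)=0$.

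The main obstacle is the Morse index vanishing in the second step. The proof of Lemma \ref{lem:geodesic_Morse_torus} uses nonpositive Riemannian curvature, which has no direct Finsler counterpart in the standard form of the Morse index theorem. The length-minimizing argument sketched above sidesteps this, but a fully rigorous treatment should either invoke the Finsler index theorem or, alternatively, compute the linearization of the Reeb flow in $\tau_0$-coordinates directly: it is upper-triangular unipotent, and its Robbin--Salamon index is $0$.
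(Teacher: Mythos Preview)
Your approach is correct in spirit but takes a genuinely different route from the paper. The paper does \emph{not} attempt to run Viterbo's formula in the Finsler setting at all. Instead it first treats only the round case $\Omega=B^n$, where Lemma~\ref{lem:Morse_vs_CZ} and Lemma~\ref{lem:geodesic_Morse_torus} apply verbatim (Riemannian), and then reduces the general convex $\Omega$ to this case by a homotopy $\Omega_t$ of convex domains from $\Omega$ to $B^n$. Along this homotopy the Morse--Bott families $S_a(\Omega_t)$ deform continuously, the rank of $(\Phi_\tau(t,1)-\on{Id})$ stays constant, and the Conley--Zehnder index is invariant under such homotopies of paths in $\on{Sp}(2n-2)$. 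This sidesteps both the Finsler Morse index theorem and any Finsler version of Viterbo's identity.

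Your main line of argument, by contrast, needs those Finsler extensions. You flag the Morse index step, but there is a second gap you do not flag: Lemma~\ref{lem:Morse_vs_CZ} as stated in the paper is for a Riemannian metric $g$, and you invoke it for a Finsler norm. The identity $\on{CZ}+\mu=I$ does extend to fiberwise convex Hamiltonians, but that is not what the paper proves or cites, so your argument as written leans on an unproven strengthening. The global-minimizer argument for $I=0$ is fine once one knows the energy functional is Morse--Bott with the geodesic families as minima; this is true for translation-invariant Finsler norms on $T^n$, but again requires a sentence of justification beyond the Riemannian Lemma~\ref{lem:geodesic_Morse_torus}.

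Your final alternative---computing the linearized Reeb flow directly in the $\tau_0$-frame and observing that it is unipotent upper-triangular with Robbin--Salamon index $0$---is actually the most robust of all three approaches (yours, your alternative, and the paper's). It avoids Viterbo's formula, the Morse index theorem, and the homotopy argument entirely; combined with your clean relative-$c_1$ comparison of $\tau$ and $\tau_0$, it gives a self-contained proof. If you want to make your write-up airtight without importing Finsler index theory, promote that alternative to the main argument.
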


\begin{proof} First assume that $\Omega = B^n \subset \R^n$ is the unit $n$-ball. Then $S^*_\Omega T^n$ is just the standard unit cosphere bundle of $T^n$. By Lemma \ref{lem:Morse_vs_CZ}, we see that
\[\on{CZ}(\Gamma,\tau) + \mu(\Gamma,\tau) = I(\Gamma) \]
The family of Lagrangians $L_\tau$ over $\Gamma$ in Definition \ref{def:Maslov} extends to a family parametrized by $\Sigma$, so $\mu(\Gamma,\tau) = \mu(L_\tau) = 0$. Furthermore, $I(\Gamma) = 0$ by Lemma \ref{lem:geodesic_Morse_torus}. Thus $\on{CZ}(\Gamma,\tau) = 0$.

For the general case of any convex domain $\Omega$, let $\Gamma = (\gamma_1,\dots,\gamma_m)$ and let $a_i = [\gamma_i] \in H_1(T^n;\Z)$. Let $\Omega_t$ be a family of convex domains with $\Omega_0 = \Omega$ and $\Omega_1 = D^n$. We have $1$-parameter families of Morse-Bott $T^{n-1}$-families of Reeb orbits
\[S_{a_i}(\Omega_t) \subset S^*_{\Omega_t}T^n \qquad\text{where}\qquad a \in H^1(T^n;\Z)\text{ and }t \in [0,1]\]
We can choose a $1$-parameter family of orbit sets $\Gamma_t$ with component orbits $\gamma_{i,t}$ that satisfy
\[\gamma_{i,t} \in S_{a_i}(\Omega_t) \qquad\text{and}\qquad \gamma_{i,0} = \gamma_i\]
We may also choose a family of trivializations $\tau_t$ over $\Gamma_t$ with $\tau_0 = \tau$. Associated to these choices is a $1$-parameter family of linearized Reeb flows  
\[
\Phi_\tau:[0,1]_t \times [0,1]_s \to \on{Sp}(2n-2) \quad\text{where}\quad \on{rank}(\Phi_\tau(t,1) - \on{Id}) \text{ is constant in }t
\]
The Conley-Zehnder index is unchanged under homotopies of paths in the symplectic group where the rank on the righthand side is constant. Therefore
\[\on{CZ}(\Gamma_t,\tau_t) \quad\text{is constant in }t\]
Thus we are reduced to the case where $\Omega$ is the unit $n$-ball.\end{proof}

\subsection{Proof of Theorem \ref{thm:main_lowerbound}} \label{subsec:proof_of_lower_bound} We now apply the analysis of \S \ref{subsec:cosphere_bundles} and \ref{subsec:free_domain} to prove the lower bound Theorem \ref{thm:main_lowerbound} from the introduction.

\vspace{3pt}

We will need to perturb the Morse-Bott contact form on the free domains $D^*_\Omega T^n$ to contact forms that are Morse (i.e. non-degenerate) below a certain action threshold. 

\begin{lemma}[Morse Perturbation, cf. \cite{b2002}] \label{lem:morse_perturbation} Let $(Y,\alpha)$ be a contact manifold with Morse-Bott Reeb orbits. Fix $\epsilon > 0$ and $L > 0$. Then there exists a non-degenerate contact form $\alpha_\epsilon$ with the following properties.
\begin{itemize}
	\item[(a)] (Closeness) $\alpha_\epsilon$ is uniformly close to $\alpha$, i.e.
	\[\|\alpha_\epsilon - \alpha\|_{C^0} < \epsilon\]
	\item[(b)] (Reeb Orbits) If $\gamma_\epsilon$ is a Reeb orbit of $\alpha_\epsilon$ with action less than $L$, then $\gamma_\epsilon$ is non-degenerate and there exists a Reeb orbit $\gamma$ in a Morse-Bott family $S$ of $\alpha$ such that
	\[\|\gamma - \gamma_\epsilon\|_{C^2} < \epsilon \qquad\text{and}\qquad \|\on{CZ}(\gamma_\epsilon,\tau) - \on{CZ}(\gamma,\tau)\| \le \on{dim}(S)\]
\end{itemize}
Here $\tau$ is a trivialization of $\xi$ along $\gamma$ (extended to a neighborhood of $\gamma$, and thus to $\gamma_\epsilon$).
\end{lemma}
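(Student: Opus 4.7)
The plan is to construct $\alpha_\epsilon$ by a Morse--Bott perturbation localized near the finitely many Morse--Bott families of Reeb orbits whose action lies below a threshold slightly larger than $L$. This is the classical scheme developed by Bourgeois \cite{b2002}, so I would follow his setup closely. The first step is to enumerate the Morse--Bott submanifolds $S_1,\dots,S_N \subset Y$ of parametrized Reeb orbits with action at most $L + \delta_0$ for some small fixed $\delta_0 > 0$; compactness of $Y$ together with the Morse--Bott hypothesis guarantees that this set is finite.

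Next, on each $S_i$ I would pick an $S^1$-invariant Morse function $f_i$ (equivalently, a Morse function on the quotient $S_i/S^1$) with $C^k$-norm less than $1$, and extend it to a function $\widetilde f_i$ on a small tubular neighborhood of $S_i$ in $Y$, multiplied by a cutoff supported in a smaller neighborhood. After shrinking the neighborhoods, the $\widetilde f_i$ have pairwise disjoint supports, and I would define
\[
\alpha_\epsilon \;=\; \Bigl(1 + \eta \sum_i \widetilde f_i\Bigr)\alpha
\]
for $\eta>0$ chosen so small that (a) follows immediately and so that no new orbits of action less than $L$ appear outside of the neighborhoods (this uses the fact that on the complement of these neighborhoods, the original Reeb flow has no closed orbits of action $\leq L$, a property stable under $C^1$-small perturbations).

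The Reeb orbits of $\alpha_\epsilon$ below action $L$ can then be identified by a standard implicit function argument: the perturbed Reeb vector field differs from $R_\alpha$ by $O(\eta)$, and the Morse--Bott form of the return map on $S_i$ means the new closed orbits inside the neighborhood of $S_i$ correspond exactly to the critical points of $f_i$, with each such orbit a $C^2$-small perturbation of an orbit in the Morse--Bott family. Non-degeneracy holds because the perturbed linearized return map is the identity on the directions normal to $S_i$ plus the Hessian of $f_i$ at the critical point on directions tangent to $S_i$ (modulo the Reeb direction), and the Morse hypothesis makes the tangent block invertible.

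The main technical point, and the heart of the argument, is the Conley--Zehnder bound. Using a trivialization $\tau$ extended from $\gamma$ to $\gamma_\epsilon$, the path of symplectic matrices associated to $\gamma_\epsilon$ is a perturbation of that for $\gamma$: it agrees to leading order on the subspace transverse to $T S_i$ and has an additional small rotation on the subspace tangent to $T S_i/\langle R_\alpha\rangle$ coming from the Hessian of $f_i$. A direct eigenvalue computation, or equivalently the standard Robbin--Salamon crossing form calculation from \cite{b2002}, shows that the degenerate crossing at time $1$ for $\gamma$ is resolved into transverse crossings whose total signature is the Morse signature of $f_i$ at the critical point; this quantity lies in the interval $[-\dim(S_i/S^1),\dim(S_i/S^1)]$, giving $|\on{CZ}(\gamma_\epsilon,\tau)-\on{CZ}(\gamma,\tau)| \leq \dim(S)$. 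The only subtlety is ensuring all estimates are uniform over the finite set of Morse--Bott families, but this follows by taking $\eta$ small enough after all choices of $f_i$ are fixed.
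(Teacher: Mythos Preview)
The paper does not give a proof of this lemma at all: it is stated with the attribution ``cf.~\cite{b2002}'' and used as a black box in the proof of Lemma~\ref{lem:codim_vs_punctures}. Your outline is the standard Bourgeois construction and is correct in its essentials; in particular the localization near the finitely many low-action Morse--Bott families, the perturbation $\alpha_\epsilon = (1+\eta\sum_i \widetilde f_i)\alpha$, the identification of perturbed orbits with critical points of the $f_i$, and the crossing-form argument for the Conley--Zehnder shift are all the right ingredients. One small point of notation to watch: you use $S_i$ for the manifold of \emph{parametrized} orbits and then pass to $S_i/S^1$, whereas the lemma's $S$ is the family of \emph{unparametrized} orbits (this is how it is applied later, where $\dim(S)=n-1$); your signature bound $\dim(S_i/S^1)$ is exactly $\dim(S)$, so the conclusion matches, but it would be clearer to make the identification explicit.
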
  

In the smooth case, the main lower bound follows quickly from the following lemma.

\begin{lemma} \label{lem:codim_vs_punctures} Let $\Omega \subset \R^n$ be a convex domain with smooth boundary and let $P$ be a tangency constraint. Then there exists an orbit set $\Gamma$ on $S^*_\Omega T^n$ such that
\[[\Gamma] = 0 \in H_1(T^n) \qquad \on{codim}(P) \le (2n - 2) \cdot (\#\Gamma - 1) \qquad \mathcal{A}(\Gamma) \le \mathfrak{r}_P(D^*_\Omega T^n)\] 
\end{lemma}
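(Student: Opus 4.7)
The plan is to use the Reeb Orbits axiom (c) of Theorem \ref{thm:main_rP_axioms} to extract a distinguished orbit set, after first perturbing the degenerate contact form on $S^*_\Omega T^n$ to a non-degenerate one. The Morse--Bott structure, combined with the index computation of Lemma \ref{lem:CZ_of_MB_families}, will give the desired codimension bound, and a compactness argument in the perturbation parameter will produce the Reeb orbit set on the original unperturbed domain.

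First I would apply the Morse Perturbation Lemma \ref{lem:morse_perturbation} to the Morse--Bott contact form on $S^*_\Omega T^n$ to obtain non-degenerate contact forms $\alpha_\varepsilon$ approximating $\alpha$ below an action threshold $L$ slightly larger than $\mathfrak{r}_P(D^*_\Omega T^n)$. Let $D^*_{\Omega,\varepsilon}T^n$ denote the associated non-degenerate Liouville domain. By continuity of RSFT capacities under $C^0$-small perturbations of the contact form (via monotonicity applied to inclusions $(1-\delta)D^*_{\Omega,\varepsilon}T^n\subset D^*_\Omega T^n\subset (1+\delta)D^*_{\Omega,\varepsilon}T^n$ for $\delta\to 0$), we have $\mathfrak{r}_P(D^*_{\Omega,\varepsilon}T^n) \to \mathfrak{r}_P(D^*_\Omega T^n)$. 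Applying axiom (c) to $D^*_{\Omega,\varepsilon}T^n$ yields an orbit set $\Gamma_\varepsilon=(\gamma_{1,\varepsilon},\dots,\gamma_{m,\varepsilon})$ and a genus $0$ immersed surface $\Sigma_\varepsilon\subset D^*_{\Omega,\varepsilon}T^n$ with $\partial \Sigma_\varepsilon = \Gamma_\varepsilon$ such that
\[
\mathcal{A}(\Gamma_\varepsilon) \leq \mathfrak{r}_P(D^*_{\Omega,\varepsilon}T^n) \qquad\text{and}\qquad \on{ind}(\Sigma_\varepsilon) = \on{codim}(P).
\]

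Next I would bound $\on{codim}(P)$ by $(2n-2)(m-1)$ via the Fredholm index formula
\[
\on{ind}(\Sigma_\varepsilon) = (n-3)\chi(\Sigma_\varepsilon) + \on{CZ}_\tau(\Gamma_\varepsilon) + 2c_1(\Sigma_\varepsilon,\tau).
\]
Choose $\tau$ to extend as a trivialization over $\Sigma_\varepsilon$, which kills the Chern term. Since $\Sigma_\varepsilon$ has genus zero with $m$ positive punctures, $\chi(\Sigma_\varepsilon)=2-m$. By Lemma \ref{lem:CZ_of_MB_families}, the unperturbed Morse--Bott orbits satisfy $\on{CZ}_\tau=0$, and Lemma \ref{lem:morse_perturbation}(b) bounds the perturbed Conley--Zehnder shifts by $\on{dim}(S_a(\Omega))=n-1$ per orbit, so $\on{CZ}_\tau(\Gamma_\varepsilon) \leq m(n-1)$. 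Combining,
\[
\on{codim}(P) \leq (n-3)(2-m) + m(n-1) = 2(n-3) + 2m.
\]
Note that $m\geq 2$: any single orbit in $S^*_\Omega T^n$ represents a non-trivial geodesic, hence a non-zero class in $H_1(T^n)$, but $\Gamma_\varepsilon$ is null-homologous since it bounds $\Sigma_\varepsilon$. The elementary identity $(2n-2)(m-1) - [2(n-3)+2m] = 2(n-2)(m-2) \geq 0$ then gives $\on{codim}(P) \leq (2n-2)(\#\Gamma_\varepsilon - 1)$.

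Finally I would pass to the limit $\varepsilon \to 0$. After extracting a subsequence we may assume $m$ is constant, and Lemma \ref{lem:morse_perturbation}(b) guarantees that each $\gamma_{i,\varepsilon}$ is $C^2$-close to a Morse--Bott orbit of $\alpha$; a further subsequence then converges to an orbit set $\Gamma$ on $S^*_\Omega T^n$ with $\mathcal{A}(\Gamma) = \lim \mathcal{A}(\Gamma_\varepsilon) \leq \mathfrak{r}_P(D^*_\Omega T^n)$. The homology class $[\Gamma_\varepsilon]=0$ is preserved in the limit, and the codimension inequality depends only on $m$, which is fixed. The main delicate point will be controlling the contact-form perturbation and the capacity together (ensuring $\mathcal{A}(\Gamma_\varepsilon)$ does not drift above $\mathfrak{r}_P(D^*_\Omega T^n)$ in the limit); this is handled by the $C^0$-closeness of $\alpha_\varepsilon$ to $\alpha$ combined with the scaling monotonicity of $\mathfrak{r}_P$.
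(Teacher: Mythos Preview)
Your overall strategy matches the paper's, but there is a genuine gap in the index estimate. You write $\chi(\Sigma_\varepsilon)=2-m$, which presupposes that $\Sigma_\varepsilon$ is \emph{connected}. The Reeb Orbits axiom (Theorem~\ref{thm:main_rP_axioms}(c)) explicitly allows $\Sigma_\varepsilon$ to be disconnected: if it has $l$ genus-zero components then $\chi(\Sigma_\varepsilon)=2l-m$, and the correct bound becomes
\[
\on{codim}(P)\;\le\;(n-3)(2l-m)+m(n-1)\;=\;2l(n-3)+2m.
\]
For $n\ge 4$ this exceeds your claimed bound $2(n-3)+2m$ whenever $l>1$, so your inequality $\on{codim}(P)\le 2(n-3)+2m$ is not justified, and your clean identity $2(n-2)(m-2)\ge 0$ does not close the argument.

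The missing ingredient is a bound on $l$. The paper observes that every component of $\Sigma_\varepsilon$ must carry at least two punctures: $T^*T^n$ is exact, so there are no closed components, and no Reeb orbit of $S^*_\Omega T^n$ is null-homologous (each projects to a nontrivial geodesic), so there are no one-punctured components. This gives $l\le m/2$ (and in particular $m\ge 2$), after which a short case check in $n$ yields $2l(n-3)+2m\le(2n-2)(m-1)$. Your argument that ``$\Gamma_\varepsilon$ is null-homologous, hence $m\ge 2$'' is correct but only addresses the total orbit set; you need the per-component version to control $l$. Once you insert this, the rest of your outline (including the $\varepsilon\to 0$ passage) is fine and essentially what the paper does.
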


\begin{proof} First, assume that $\Omega$ has smooth boundary. Let $\epsilon > 0$ and $L = 2 \cdot \mathfrak{r}_P(D^*_\Omega T^n)$, and take a Morse perturbation $\alpha_\epsilon$ of $\alpha$ as in Lemma \ref{lem:morse_perturbation}. By the Reeb orbit axiom of $\mathfrak{r}_P$ (i.e. Theorem \ref{thm:main_rP_axioms}(c)), there is a list of Reeb orbits
\[\Gamma_\epsilon = (\gamma^1_\epsilon,\dots,\gamma^m_\epsilon)\]
of $\alpha_\epsilon$ and a (possibly disconnected) genus $0$ immersed surface $\Sigma \subset X$ bounding $\Gamma$ such that
\[
	 \mathcal{A}(\Gamma) = \sum_{\gamma \in \Gamma} \int_\gamma \lambda \le \mathfrak{r}_P(X) \qquad\text{and}\qquad \on{ind}(\Sigma) = \on{codim}(P)
\] 
Applying the formula for the Fredholm index of $\Sigma$ using a trivialization $\tau$ that extends to all of $\Sigma$, we acquire the following.
\[
\on{codim}(P) = (n - 3) \cdot \chi(\Sigma) + \on{CZ}(\Gamma_\epsilon,\tau)
\]
Now we estimate the Euler characteristic and Conley-Zehnder index. 

\vspace{3pt}

For the Euler characteristic, let $l$ be the number of components of $\Sigma$. Since $\Sigma$ is genus $0$ and has punctures on the orbit set $\Gamma_\epsilon$, the Euler characteristic is given by
\[
\chi(\Sigma) = 2l - m 
\]
For the Conley-Zehnder index, we note that by Lemma \ref{lem:morse_perturbation}(b), the orbit set $\Gamma_\epsilon$ is $C^2$-close to an orbit set of $\alpha$ of the form
\[\Gamma = (\gamma^1,\dots,\gamma^m) \qquad\text{with}\qquad [\Gamma] = [\Gamma_\epsilon] \quad\text{and}\quad |\on{CZ}(\Gamma,\tau) - \on{CZ}(\Gamma_\epsilon,\tau)| \le m \cdot (n-1)\]
Since $\Gamma_\epsilon$ bounds a surface, so does $\Gamma$. Thus by Lemma \ref{lem:CZ_of_MB_families}, we have $\on{CZ}(\Gamma,\tau) = 0$. Therefore
\[\on{CZ}(\Gamma_\epsilon,\tau) \le m \cdot (n-1)\]
Adding together these two estimates, we acquire the following bound.
\begin{equation}\label{eqn:codim_vs_punctures_1} \on{codim}(P) \le (n-3) \cdot (2l - m) + (n-1) \cdot m = 2m + (2n - 6) \cdot l\end{equation}

Now observe that every component of $\Sigma$ must have at least two punctures. Indeed, $T^*T^n$ is exact, so there are no closed surfaces of positive area and no component can have $0$ punctures. Moreover, no Reeb orbit in $S^*_\Omega T^n$ is null-homotopic, so there cannot be a component with $1$ puncture. Thus
\begin{equation}\label{eqn:codim_vs_punctures_2} l \le m/2 \quad\text{and}\quad m \ge 2 \qquad\text{where}\qquad m = \# \Gamma\end{equation}
Now we can apply (\ref{eqn:codim_vs_punctures_2}) to bound the righthand side of (\ref{eqn:codim_vs_punctures_1}) by case on $n$. For example
\[2m + (2n - 6) l \le 2m + (n - 3) m \le (n - 1) m \le (2n - 2)(m-1) \qquad \text{if} \qquad n \ge 4 \]
The other cases of $n = 2$ and $n = 3$ can be checked similarly. This concludes the proof.
\end{proof}

Theorem \ref{thm:main_lowerbound} is an easy corollary of Lemma \ref{lem:codim_vs_punctures} and a simple limiting argument. 

\begin{cor} \label{cor:main_lower_bound} Let $\Omega \subset \R^{2n}$ be a weakly convex domain. Then
\[\mathfrak{l}_k(\Omega) \le \mathfrak{r}_P(D^*_\Omega T^n) \le \mathfrak{r}_P(X_\Omega) \qquad\text{where}\qquad 2k = \on{codim}(P)\]
\end{cor}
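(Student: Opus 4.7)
The plan is to reduce to the case where $\Omega$ is smooth, strongly convex, and contained in $(0,\infty)^n$, and in that case derive the two inequalities from Lemma \ref{lem:codim_vs_punctures} together with an identification $X_\Omega \cong D^*_\Omega T^n$. For the reduction I would approximate the weakly convex $\Omega$ from inside by a sequence $\Omega_i$ of smooth strongly convex domains with $\Omega_i \subset (0,\infty)^n$ and $\Omega_i \to \Omega$ in the Hausdorff metric. The quantity $\mathfrak{l}_k$ is monotone by Lemma \ref{lem:lk_uk_properties}(a) and Hausdorff-continuous by Lemma \ref{lem:lk_uk_properties}(c), so $\mathfrak{l}_k(\Omega_i) \to \mathfrak{l}_k(\Omega)$. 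The capacities $\mathfrak{r}_P$ on $D^*_{\Omega_i} T^n \subset D^*_\Omega T^n$ and $X_{\Omega_i} \subset X_\Omega$ are monotone by Theorem \ref{thm:main_rP_axioms}(a), and combined with the scaling behaviour of $\mathfrak{r}_P$ this is enough to pass the desired inequalities through the limit.

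In the smooth strongly convex case with $\Omega \subset (0,\infty)^n$, the standard toric action-angle coordinates give a symplectomorphism $X_\Omega \cong D^*_\Omega T^n$, so $\mathfrak{r}_P(D^*_\Omega T^n) = \mathfrak{r}_P(X_\Omega)$ and the right inequality is immediate. For the left inequality I would apply Lemma \ref{lem:codim_vs_punctures} to produce a null-homologous orbit set $\Gamma = (\gamma_1, \ldots, \gamma_m)$ on $S^*_\Omega T^n$ satisfying $m - 1 \ge \on{codim}(P)/(2n-2) = k$ and $\mathcal{A}(\Gamma) \le \mathfrak{r}_P(D^*_\Omega T^n)$. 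By the description of the Morse--Bott families of Reeb orbits in \S\ref{subsec:free_domain}, each $\gamma_i$ corresponds to a non-zero lattice vector $v_i \in H_1(T^n;\Z) \simeq \Z^n$ with action $\|v_i\|^*_\Omega$; the condition $[\Gamma] = 0$ then becomes $\sum_i v_i = 0$, so $(v_1, \ldots, v_m)$ is admissible for the infimum in Definition \ref{def:lattice_def_lk_uk} and
\[\mathfrak{l}_k(\Omega) \le \sum_i \|v_i\|^*_\Omega = \mathcal{A}(\Gamma) \le \mathfrak{r}_P(D^*_\Omega T^n).\]

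The hard part, I expect, will be cleanly justifying the continuity of $\mathfrak{r}_P$ under the Hausdorff approximation. Unlike the analogous property for $\mathfrak{l}_k$ this is not proved in the paper, and care is required when $\Omega$ touches the coordinate hyperplanes of $\R^n$, since then $D^*_\Omega T^n$ and $X_\Omega$ differ topologically (one contains the Lagrangian torus $T^n \times \{0\}$ where the other has a single point). Fortunately, for the outer inequality $\mathfrak{l}_k(\Omega) \le \mathfrak{r}_P(X_\Omega)$ one only needs the chain $\mathfrak{l}_k(\Omega_i) \le \mathfrak{r}_P(X_{\Omega_i}) \le \mathfrak{r}_P(X_\Omega)$, which uses only symplectic monotonicity of $\mathfrak{r}_P$ and continuity of $\mathfrak{l}_k$; the subtler middle inequality is only needed in the smooth case, where the identification $X_\Omega \cong D^*_\Omega T^n$ makes it automatic.
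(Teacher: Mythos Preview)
Your proposal is correct and follows essentially the same route as the paper: approximate $\Omega$ from inside by smooth convex domains $\Delta$ lying in $(0,\infty)^n$, use the identification $X_\Delta \simeq D^*_\Delta T^n$ there together with Lemma \ref{lem:codim_vs_punctures}, and pass to the limit via monotonicity and scaling of $\mathfrak{r}_P$ combined with Hausdorff continuity of $\mathfrak{l}_k$. One minor terminological point: your approximating domains $\Omega_i$ should be called ``smooth convex'' rather than ``strongly convex'', since the latter has a specific meaning in this paper (Definition \ref{def:strongly_convex}) that is incompatible with $\Omega_i \subset (0,\infty)^n$.
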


\begin{proof} Let $\Delta \subset \Omega$ be a domain acquired by shrinking $\Omega$ slightly and then translating it to be contained in $\Omega$. Then since the boundary of $\Delta$ does not touch the boundary of $[0,\infty)^n$, we have a toric symplectomorphism
\[
X_\Delta \simeq D^*_\Delta T^n \quad\text{and thus}\quad \mathfrak{r}_P(D^*_\Delta T^n) \le \mathfrak{r}_P(X_\Omega) 
\]
In the limit as $\Delta$ approaches $\Omega$ in the $C^0$-topology, we acquire $\mathfrak{r}_P(D^*_\Omega T^n) \le \mathfrak{r}_P(X_\Omega)$.

\vspace{3pt}

To lower bound $\mathfrak{r}_P(D^*_\Omega T^n)$, let $\Gamma$ be a Reeb orbit set with component orbits $\gamma_i$ in homology class $[\gamma_i] = v_i$ in $H_1(T^2;\Z) \simeq \Z^2$. Then
\begin{equation} \label{eqn:formulas_for_action_and_homology}
\mathcal{A}(\Gamma) = \sum_i \mathcal{A}(\gamma_i) = \sum_i \|v_i\|_{\Omega}^* \qquad [\Gamma] = \sum_i v_i \in \Z^2\end{equation}
Thus, the lower bound of $\mathfrak{r}_P(D^*_\Omega T^n)$ follows immediately from (\ref{eqn:formulas_for_action_and_homology}) and Lemma \ref{lem:codim_vs_punctures} when $\Omega$ is smooth. In general, we can approximate $\Omega$ in $C^0$ by a family of smooth domains $\Omega(\epsilon)$ with
\[(1 - \epsilon) \cdot \Omega(\epsilon) \subset \Omega \subset (1 + \epsilon) \cdot \Omega(\epsilon)\]
The Monotonicity axiom of $\mathfrak{r}_P$ (i.e. Theorem \ref{thm:main_rP_axioms}(a)) implies that
\[\mathfrak{l}_k(\Omega(\epsilon)) \to \mathfrak{l}_k(\Omega) \qquad\text{and}\qquad \mathfrak{r}_P(D_{\Omega(\epsilon)}^*T^n)) \to \mathfrak{r}_P(D_\Omega^*T^n)\] 
Thus the singular case follows from the smooth case. \end{proof}

\section{Upper bounds via movable curves} \label{subsec:upper_bounds_via_movable_curves}

In this section, we demonstrate the main upper bounds in this paper, Theorems \ref{thm:main_upperbound}, \ref{thm:main_upperbound_closed} and \ref{thm:main_upperbound_stable}. As discussed in the introduction, the main tool is a construction of immersed symplectic spheres out of cocharacter curves. 

\subsection{Immersed caps for torus knots} \label{subsec:torus_knots} To explain our proof, we need some preliminaries on the symplectic topology of transverse torus knots. 

\vspace{3pt}

Fix coprime positive integers $p$ and $q$. We let $V(p,q) \subset \C^2$ and $L(p,q) \subset S^3$ denote the $(p,q)$-singularity and $(p,q)$-torus knot respectively.
\begin{equation} V(p,q) := \{(x,y) \in \C^2 \; : \; x^q - y^p = 0\} \qquad L(p,q) := V(p,q) \cap S^3\end{equation}
Note that we can take $S^3 \subset \C^2$ to be the sphere of any radius centered at $0$, since $V(p,q)$ is transverse to every such sphere. It is well known that $L(p,q)$ is a transverse knot with respect to the standard contact structure induced on $S^3$ by restricting the radial Liouville form of $\C^2$ to $S^3$. 

\vspace{3pt}

In fact, we may view $L(p,q)$ as a braid using the standard open book on $S^3$ with binding $B$ and projection $\pi$ given by
\[
B := (\C \times 0) \cap S^3 \quad\text{and}\quad \pi:S^3 \setminus B \to S^1 \text{ defined as }\pi(x,y) = \frac{y}{|y|} \in S^1 \subset \C
\]

\begin{lemma} The knot $L(p,q)$ is in braid position with respect to the standard open book on $S^3$. \end{lemma}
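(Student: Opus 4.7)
The plan is to write down an explicit parametrization of $L(p,q)$ and then verify directly that it is disjoint from the binding $B$ and transverse to the pages of the open book.

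First I would analyze the intersection $V(p,q) \cap S^3$ in polar coordinates. Writing $x = r_1 e^{i\theta_1}$ and $y = r_2 e^{i\theta_2}$, the equation $x^q = y^p$ becomes
\[
r_1^q = r_2^p \qquad\text{and}\qquad q\theta_1 \equiv p\theta_2 \pmod{2\pi}
\]
while the sphere condition gives $r_1^2 + r_2^2 = 1$. Together these uniquely determine positive constants $r_1, r_2 > 0$. Since $\gcd(p,q) = 1$ the phase condition is equivalent to the existence of a unique $\theta \in \R/2\pi\Z$ with $\theta_1 = p\theta$ and $\theta_2 = q\theta$, yielding the parametrization
\[
\gamma: S^1 \to L(p,q) \qquad \gamma(\theta) = (r_1 e^{ip\theta}, r_2 e^{iq\theta})
\]

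Next I would verify the two conditions needed for $L(p,q)$ to sit as a braid with respect to the open book $(B,\pi)$. Disjointness from the binding is immediate: since $r_2 > 0$, the second coordinate of every point on $L(p,q)$ is nonzero, so $L(p,q) \cap (\C \times 0) = \emptyset$. Transversality to the pages is equally direct: composing with the page projection gives
\[
\pi \circ \gamma(\theta) = e^{iq\theta}
\]
whose derivative $iq\, e^{iq\theta}$ is nowhere vanishing. Hence $\pi|_{L(p,q)}$ is a $q$-fold covering map of $S^1$, which is precisely the definition of being in braid position.

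There is no serious obstacle here; the entire content is the explicit parametrization. The only thing worth double-checking is the coprimality input, which is used to pass from the congruence $q\theta_1 \equiv p\theta_2$ to a single parameter $\theta$; this is where $\gcd(p,q) = 1$ enters and also why $L(p,q)$ is connected.
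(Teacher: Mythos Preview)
Your proposal is correct and follows essentially the same route as the paper: write down an explicit parametrization of $L(p,q)$, check that the second coordinate never vanishes (disjointness from the binding), and observe that $\pi\circ\gamma$ is the map $\theta\mapsto q\theta$ on $S^1$ (transversality to the pages). The only cosmetic difference is that the paper works on the sphere of radius $\sqrt{2}$ so that $r_1=r_2=1$ and the parametrization becomes simply $t\mapsto(t^p,t^q)$ for $t\in U(1)$, whereas you solve for the radii on the unit sphere; the verification is otherwise identical.
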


\begin{proof} A knot is in braid position with respect to an open book if it is disjoint from the binding and transverse to the pages. Using the sphere of radius $\sqrt{2}$, we can parametrize $L(p,q)$ as so.
\[
\gamma:S^1 = U(1) \to \C^2 \qquad \gamma(t) = (t^p,t^q)
\]
We see that this curve is disjoint from $\C \times 0$ and that $\pi \circ \gamma:U(1) \to U(1)$ is the map $\pi \circ \gamma(t) = t^q$. This implies the lemma. \end{proof}

We will need a family of immersed disks for use in later sections. To construct these disks, we start by showing that every $(p,q)$-knot is a regularly and transversely homotopic to the unknot.

\begin{lemma} \label{lem:homotopy_of_Lpq} There is a regular homotopy $K_t$ of transverse knots from the standard transverse unknot $U$ to $L(p,q)$, that only has positive self-intersections.
\end{lemma}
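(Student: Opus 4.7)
The plan is to realize the regular homotopy as the family of concentric sphere slices of a symplectic immersed disk $D \subset B^4$ bounding $L(p,q)$ with $(p-1)(q-1)/2$ positive transverse nodes.

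To construct $D$, I would apply Rudolph's quasipositive surface construction to the positive braid word $(\sigma_1 \sigma_2 \cdots \sigma_{p-1})^q$ whose closure is $L(p,q)$: this produces an immersed disk in $B^4$ whose only singularities are $(p-1)(q-1)/2$ transverse double points, each positive because the braid is positive. The resulting $D$ is isotopic to a holomorphic immersed disk for the standard complex structure on $B^4$, so it is symplectic and its nodes are positive symplectic self-intersections. After a $C^\infty$-small compactly supported symplectic isotopy, I would arrange that $D$ is transverse to the radial Liouville vector field
\[
Z = \tfrac{1}{2}\sum_i (x_i\partial_{x_i} + y_i\partial_{y_i}),
\]
that $r|_D$ is Morse with a unique minimum at the origin and no other interior critical points, and that the nodes lie at pairwise distinct radii $0 < \rho_1 < \cdots < \rho_n < 1$.

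Then for $\rho \in (0,1] \setminus \{\rho_1,\dots,\rho_n\}$ the slice $K_\rho := D \cap S^3_\rho$ is a smooth embedded closed curve in $S^3_\rho$, and since $D$ is symplectic and transverse to $Z$ the standard ``braided link'' argument shows that $K_\rho$ is transverse to the contact structure $\xi_\rho = \ker(\lambda|_{S^3_\rho})$. At each critical radius $\rho = \rho_i$ the slice $K_{\rho_i}$ has exactly one transverse self-intersection, which is positive because the corresponding node of $D$ is positive. Rescaling the spheres $S^3_\rho$ to $S^3$ turns $\{K_\rho\}_{\rho\in(0,1]}$ into a regular homotopy of transverse immersed curves with only positive self-intersections; at $\rho=1$ we have $K_1 = \partial D$, transversely isotopic to $L(p,q)$, while as $\rho \to 0^+$ the slice $K_\rho$ shrinks onto a small unknotted transverse circle bounding an embedded disk near the origin, which has self-linking number $-1$ and is hence transversely isotopic to the standard transverse unknot $U$.

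The main technical obstacle is the joint construction of the immersed symplectic disk $D$ with all positive nodes and its placement in radially transverse position: the existence of such a $D$ is Rudolph's theorem for positive braid closures, and the radial transversality follows from a generic symplectic perturbation argument, but one must carry these out compatibly so as to preserve the node count, the positivity of each node, and the Morse condition on $r|_D$ throughout the perturbation.
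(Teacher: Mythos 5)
Your proposal takes a genuinely different route from the paper, and it also inverts the paper's logical architecture in a way that deserves comment. The paper's own proof is purely three-dimensional: it writes $L(p,q)$ as the closure of the positive braid word $(\sigma_1\cdots\sigma_{p-1})^q$, flips a subset of crossings to produce an ascending braid whose closure is a transverse unknot of self-linking $-1$, and then takes the regular homotopy to be the sequence of $-$ to $+$ crossing changes. The lemma is then used, via the trace construction of \cite[Lem 2.4]{eg2020}, to produce the immersed symplectic cap in Corollary \ref{cor:capping_torus_knots}. You instead propose to \emph{first} construct an immersed symplectic disk $D$ bounding $L(p,q)$ in $B^4$ with positive nodes and then read off the homotopy from concentric sphere slices. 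That is logically consistent but it inverts the dependency: you would in effect be proving Corollary \ref{cor:capping_torus_knots} directly and deriving the lemma as a byproduct, making the lemma redundant in the paper's structure.

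More importantly there is a gap in the construction of $D$. Rudolph's quasipositive surface construction applied to the positive braid $(\sigma_1\cdots\sigma_{p-1})^q$ produces an \emph{embedded} Bennequin/fiber surface of genus $(p-1)(q-1)/2$, not an immersed disk; there is no disk with $(p-1)(q-1)/2$ nodes in Rudolph's output. The object you actually want is a generic deformation of the parametrization $t\mapsto(t^p,t^q)$: for generic small perturbations $t\mapsto(t^p+a(t),\,t^q+b(t))$ with $\deg a<p$, $\deg b<q$, one obtains an immersed holomorphic disk with exactly $\delta=(p-1)(q-1)/2$ positive nodes and boundary transversely isotopic to $L(p,q)$. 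With that substitution the outline works, but two further points need an argument rather than an assertion: (i) that a positive symplectic node of $D$ at radius $\rho_i$ induces a \emph{positive} self-intersection of the trace cylinder when passing through $\rho_i$ (this is true but requires identifying the shell $\{\rho_i-\delta\le r\le\rho_i+\delta\}\subset B^4$ with a piece of the symplectization via the Liouville flow and tracking orientations); and (ii) that the compatibility conditions you list (Morse radial function with a single interior critical point, nodes at distinct radii, transversality to the Liouville field) can be achieved by a perturbation that does not destroy the positivity or number of nodes. You flag (ii) as an obstacle, which is fair, but as written neither (i) nor (ii) is discharged, and they are precisely where the 4-dimensional approach is heavier than the paper's elementary braid computation.
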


\begin{proof} Consider the standard braid $B(p,q)$ representing $L(p,q)$, written in terms of (right-handed) braid generators $\sigma_i$ of the $p$th braid group $B_p$ as
\[
B(p,q) = g^q \qquad g := \sigma_1\sigma_2 \dots \sigma_{p-1}
\]
We may describe $B(p,q)$ topologically as so. Order and label the strands of the braid as $s_1,s_2,\dots,s_p$ from top to bottom. Then $g$ takes the top strand of the braid and crosses it positively over the other strands to move it to the bottom, introducing $p-1$ crossings. $B(p,q)$ does this $q$ times, introducing $q(p-1)$ crossings. 

\begin{figure}[h]
\centering
\caption{The $B(p,q)$ braid for $p = 5$ and $q = 4$.}
\includegraphics[width=.6\textwidth]{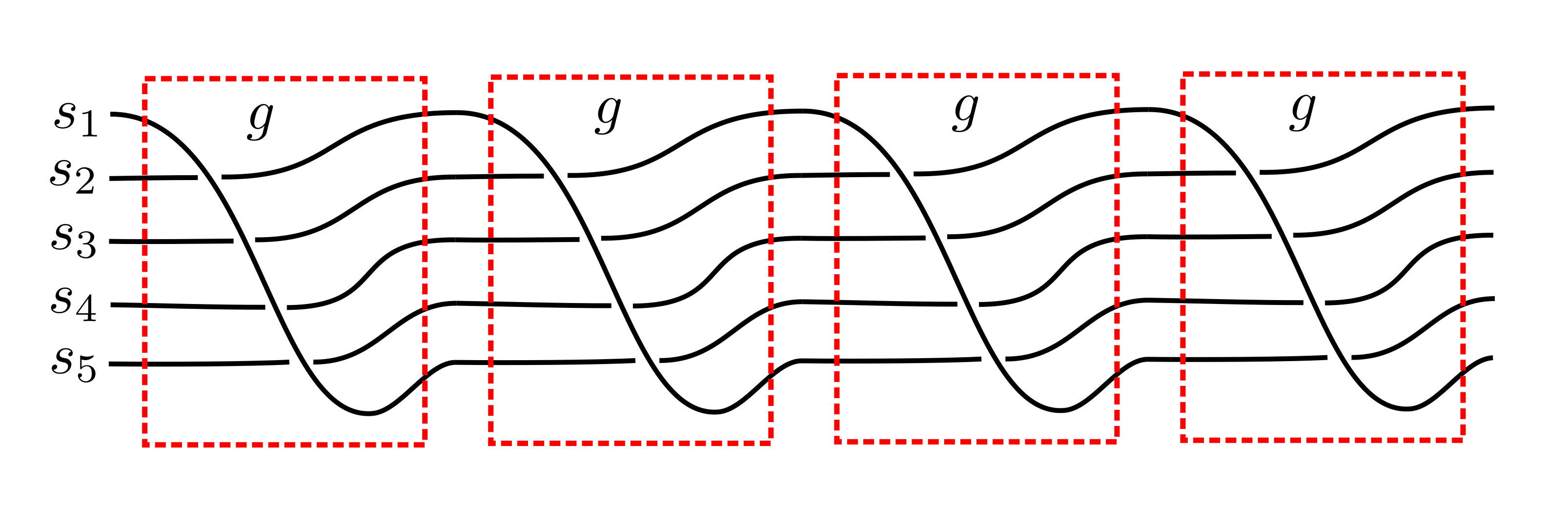}
\label{fig:pq_braid}
\end{figure}

\begin{figure}[h]
\centering
\caption{The $C$ braid for $p = 5$ and $q = 4$.}
\includegraphics[width=.6\textwidth]{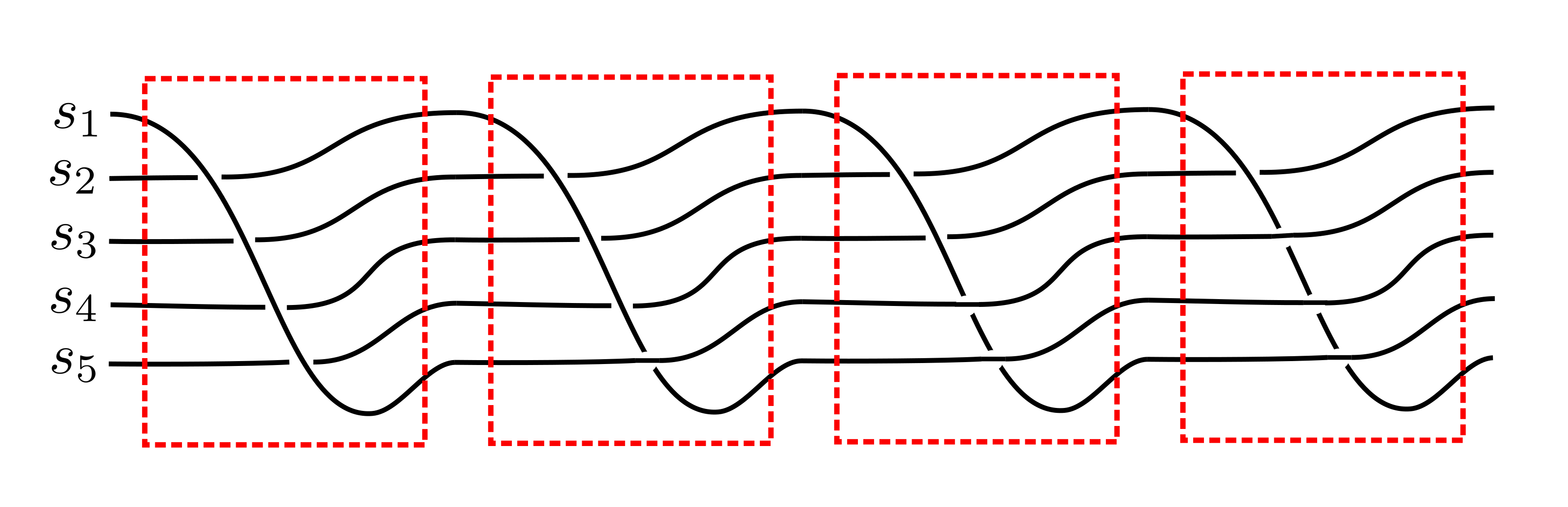}
\label{fig:Upq_braid}
\end{figure} 

We may form a new braid $C$ by changing each positive (right-hand) crossing of $s_i$ over $s_j$ with $i > j$ to a negative (left-hand) crossing. This braid is ascending. Therefore the closure $U$ of $C$ is a topological unknot, and we must compute its self-linking number. Given a braid $B$, let $n_\pm(B)$ denote the number of $\pm$ crossings and $b(B)$ be the number of strands. Then the self-linking number of the braid closure is
\[\on{sl}(K) = n_+(B) - n_-(B) + b(B)\]
In the case of the braid $C$, one can easily compute each of the above numbers.
\[
n_+(C) = (p-1)q - \frac{1}{2}(p-1)(q-1) \qquad n_-(C) = \frac{1}{2}(p-1)(q-1) \qquad b(C) = p
\]
 Therefore $\on{sl}(U) = -1$, so $U$ is the standard unknot of self-linking number $-1$.

\vspace{3pt}

The regular homotopy $K_t$ can be realized by taking the braid closure and performing the homotopy corresponding to the crossing changes from $C$ to $B(p,q)$. Since these are all $-$ to $+$ crossing changes, the corresponding intersections in the homotopy are positive. \end{proof}

Any regular homotopy can be viewed as an immersed symplectic cobordism in the symplectization. Here is the precise version of this statement that we need.

\begin{lemma}[Homotopy to Cobordism] \cite[Lem 2.4]{eg2020}  Let $\phi:[0,1] \times S^1 \to M$ be a regular homotopy between transverse links through a contact $3$-manifold $M$. Then for $a > 0$ sufficiently large, the trace map
\[
\Phi:[0,1] \times S^1 \to (-\infty,0] \times M \qquad \Phi(t,s) := ((a + 1)t - a,\phi_t(s))
\]
is a symplectic immersion. Furthermore, every $\pm$-double point of $\phi$ becomes a $\pm$-double point of $\Phi$.
\end{lemma}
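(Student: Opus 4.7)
The plan is a direct local computation. Fix a contact form $\alpha$ on $M$ defining the contact structure, so the symplectization form is $\omega = d(e^r\alpha)$, where $r$ is the coordinate on $(-\infty,0]$. Since each $\phi_t\colon S^1 \to M$ is an immersion transverse to the contact structure, one may write $\phi_t^*\alpha = f(t,s)\,ds$ for a smooth function $f\colon[0,1]\times S^1\to\R$ that is strictly positive. By compactness of the domain, $f$ is bounded below by some $c>0$ and $|\partial_t f|$ is bounded above by some $C<\infty$.

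Pulling back along $\Phi$ and using $\Phi^*(e^r\alpha) = e^{(a+1)t-a}f(t,s)\,ds$, one computes
\[
\Phi^*\omega \;=\; e^{(a+1)t-a}\bigl((a+1)f(t,s) + \partial_t f(t,s)\bigr)\,dt\wedge ds.
\]
Choosing $a$ large enough that $(a+1)c > C$ makes the coefficient strictly positive, so $\Phi^*\omega$ is a positive area form on $[0,1]\times S^1$ and $\Phi$ is therefore a symplectic immersion into the symplectization.

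For the statement about double points, note that the $r$-component of $\Phi$ depends only on $t$ and is a strictly increasing function of $t$, so any self-intersection $\Phi(t_1,s_1)=\Phi(t_2,s_2)$ forces $t_1=t_2$. Hence double points of $\Phi$ are in canonical bijection with double points of the regular homotopy $\phi$. At any such double point, the tangent planes of the two branches of $\Phi$ are the images under $d\Phi$ of the corresponding tangent planes in $\R\oplus TM$, and $d\Phi$ has the block form $(1,v)\mapsto(a+1,v)$. Since $a+1>0$ and $d\Phi$ restricts to the identity on the $TM$-factor, this map is orientation-preserving on every pair of transverse $2$-planes. Combined with the fact that the symplectic orientation $\omega\wedge\omega$ on the symplectization is a positive multiple of $dr\wedge\alpha\wedge d\alpha$, this shows that the sign of each double point of $\phi$ (computed in $\R\oplus M$ with the orientation $dt \wedge \alpha \wedge d\alpha$) coincides with the sign of the corresponding double point of $\Phi$ in the symplectization.

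The main obstacle is choosing the right framework for the sign of a double point of a regular homotopy so that the comparison under $d\Phi$ is literally a sign calculation; once that is set up, positivity of the $(a+1)$-factor handles everything, and the only genuinely analytic step is the positivity bound on $(a+1)f + \partial_t f$, which is immediate from compactness once $a$ is chosen large.
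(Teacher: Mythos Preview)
The paper does not prove this lemma; it is simply quoted from \cite[Lem~2.4]{eg2020} and used as a black box. So there is no ``paper's own proof'' to compare against, and your direct computation is a reasonable way to supply one.

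Your argument is essentially correct but contains a small computational oversight. When you pull back $\alpha$ by the full map $\phi\colon[0,1]\times S^1\to M$, the $1$-form $\phi^*\alpha$ need not be purely $f(t,s)\,ds$: it will in general have a $dt$-component as well, say $\phi^*\alpha = g(t,s)\,dt + f(t,s)\,ds$, coming from $\alpha(\partial_t\phi)$. Carrying this through, one finds
\[
\Phi^*\omega \;=\; e^{(a+1)t-a}\bigl((a+1)f + \partial_t f - \partial_s g\bigr)\,dt\wedge ds,
\]
with an extra $-\partial_s g$ term. This does not affect the conclusion: $g$ and hence $\partial_s g$ are bounded on the compact domain, so the same ``take $a$ large'' argument goes through unchanged. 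The transversality hypothesis is exactly what makes $f>0$, which is the crucial point you identified.

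Your treatment of the double points is correct in outline; the bijection via injectivity of the $r$-coordinate in $t$ is clean, and the sign preservation follows because the map on tangent spaces is orientation-preserving. One could make the sign comparison slightly more explicit by writing out the two tangent $2$-planes at a double point and checking the $4$-form directly, but what you have is adequate.
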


The standard unknot bounds a symplectic disk $D$ in $S^3$ (and therefore $B^4$). By gluing this disk $D$ to the trace surface $\Sigma$ of the regular homotopy in Lemma \ref{lem:homotopy_of_Lpq}, we have the following corollary.

\begin{cor} \label{cor:capping_torus_knots} The transverse knot $L(p,q) \subset S^3$  bounds an immersed symplectic disk $\Sigma \subset B^4$ with only positive self-intersections. 
\end{cor}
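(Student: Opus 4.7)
The plan is to build $\Sigma$ as the concatenation of two symplectic pieces: the immersed cobordism trace of the regular homotopy from Lemma \ref{lem:homotopy_of_Lpq}, capped off by a standard embedded symplectic disk for the unknot.

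First I would identify a collar of $\partial B^4$ inside $B^4$ with a negative symplectization $(-a,0]\times S^3$ via the radial Liouville flow, so that $\{0\}\times S^3=\partial B^4$. Applying the Homotopy-to-Cobordism lemma to the regular homotopy $K_t$ of Lemma \ref{lem:homotopy_of_Lpq} (run so that $U$ sits at the inner slice and $L(p,q)$ at the outer slice), for $a$ sufficiently large one obtains a symplectic immersion
\[
\Phi:[0,1]\times S^1 \;\longrightarrow\; (-a,0]\times S^3
\]
whose boundary is $U$ in $\{-a\}\times S^3$ and $L(p,q)$ in $\{0\}\times S^3 = \partial B^4$. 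Because $K_t$ has only positive double points, so does $\Phi$.

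Next, the unknot $U$ sits in the interior sphere $\{-a\}\times S^3$, which bounds a smaller symplectic ball $B'\subset B^4$. As a standard transverse unknot, $U$ bounds a symplectically embedded disk $D_U\subset B'$ (for instance, the intersection of $B'$ with a complex line through its center). Gluing $D_U$ to $\Phi$ along $U$ produces a piecewise-smooth immersed symplectic disk in $B^4$ with boundary $L(p,q)$, whose only self-intersections are the positive double points of $\Phi$.

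The remaining step, and the only real technical point, is to smooth the match across $U$. Because the regular homotopy begins at an embedded knot, all double points of $\Phi$ lie in the open range $t\in(0,1)$, hence uniformly away from the gluing circle. A standard symplectic interpolation in a small tubular neighborhood of $U$ that avoids both the double points of $\Phi$ and the interior of $D_U$ yields a smoothly immersed symplectic disk $\Sigma\subset B^4$ with $\partial\Sigma=L(p,q)$ and only positive self-intersections. The main obstacle is simply to verify that this smoothing can be performed without creating new intersections, which follows from the openness of the symplectic immersion condition together with the fact that both pieces meet $U$ transversely to the level $\{s=-a\}$.
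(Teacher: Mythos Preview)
Your proposal is correct and follows essentially the same approach as the paper: glue the immersed symplectic trace cobordism coming from Lemma~\ref{lem:homotopy_of_Lpq} (via the Homotopy-to-Cobordism lemma) to an embedded symplectic disk bounding the standard unknot. The paper records only the gluing in a single sentence, whereas you have additionally spelled out the collar identification and the smoothing across $U$; these extra details are fine and do not deviate from the intended argument.
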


\subsection{Symplectic spheres representing cocharacters} \label{subsec:symplectic_spheres} We next apply the discussion in \S \ref{subsec:torus_knots} to find well-behaved immersed symplectic spheres that represent cocharacters in homology.

\vspace{3pt}

We first need some observations about the curve $C_\rho$ for any $\rho\in\Sigma_\Omega(1)$. . The map $u_\rho\colon\pr^1\to Y_\Sigma$ defining $C_\rho$ is injective and restricts to a holomorphic group map $u_\rho:\C^\times \to (\C^\times)^2 \subset Y$. There are two points $0_\rho$ and $\infty_\rho$ of $C_\rho$ in the complement of $u(\C^\times)$. 

\begin{figure}[h]
\centering
\caption{A depiction of the cocharacter curve $C_\rho$.}
\includegraphics[width=.3\textwidth]{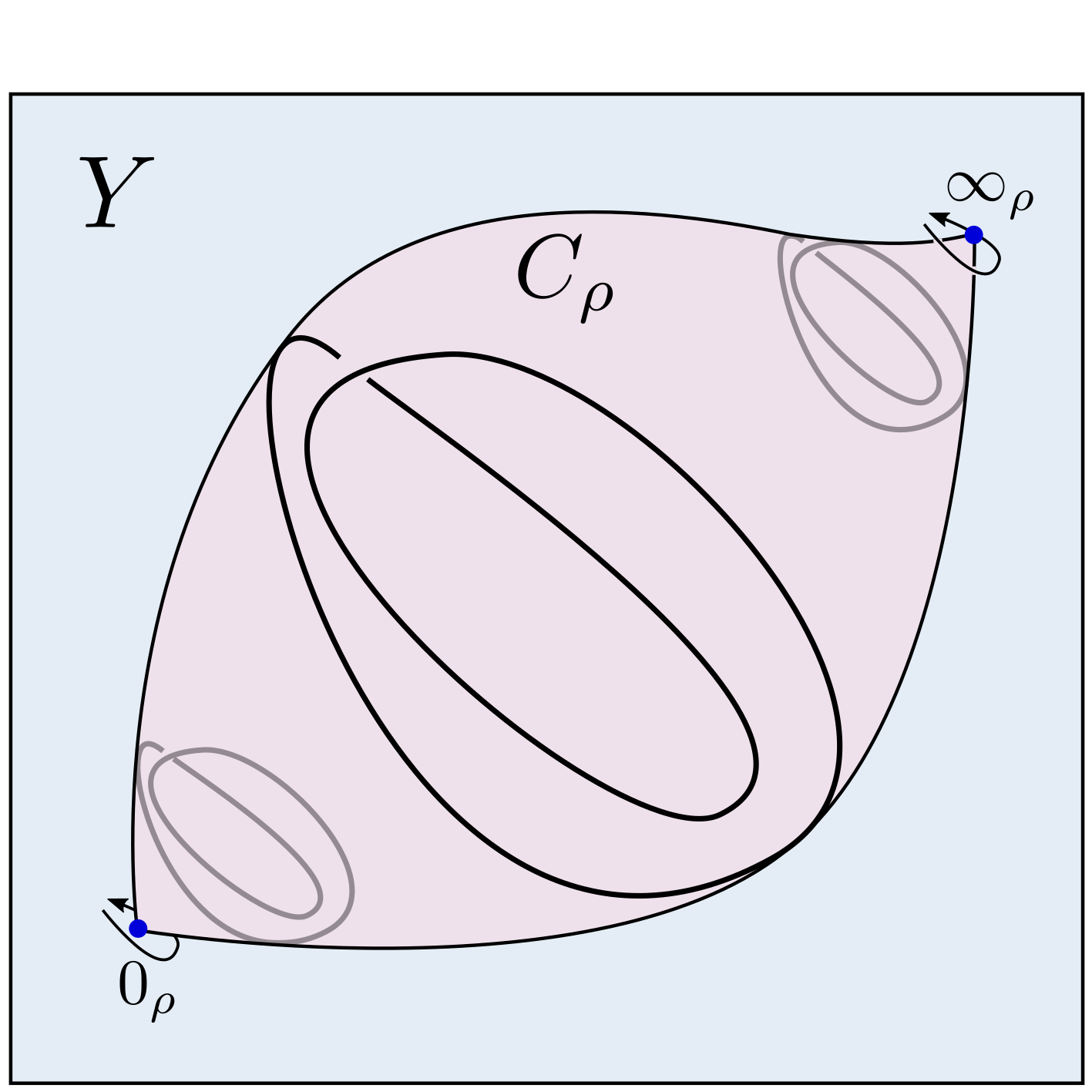}
\label{fig:cocharacter_curve}
\end{figure}

The singularities of $C_\rho$ are both (or one) of the points $0_\rho$ and $\infty_\rho$, and when one of these points is singular then it must be equal to a torus fixed point $p$ of $Y$. If we choose a toric affine chart $\C^2 \simeq U \subset Y$ centered at $p$ and $p \in C_\rho$, then there is a coprime $a$ and $b$ such that
\begin{equation}u_\rho(t) = (t^a,t^b) \qquad\text{and}\qquad C_\rho \cap \C^2 = V(a,b) \qquad \text{in the chart $U \simeq \C^2$} \end{equation}
Note that in the ball $B$ of radius $1$ in the chart $U$ above, $C_\rho \cap C_\sigma \cap B = p$ if $p \in C_\rho \cap C_\sigma$. Furthermore, $C_\rho \cap C_\sigma \cap B$ is empty when $p \not\in C_\rho \cap C_\sigma$.

\begin{lemma} \label{lem:cocharacter_spheres} Let $P$ be a finite set of distinct primitive vectors in the cocharacter lattice of $Y$. Then there is a collection of immersed symplectic spheres $S_\rho$ for $\rho \in P$ with $[S_\rho] = [C_\rho] \in H_2(Y)$ that satisfy
\begin{itemize}
	\item[(a)] All self-intersections of $S_\rho$ are positive, transverse double points .
	\item[(b)] All interections in $S_\rho \cap S_\sigma$ are positive, transverse double points for each distinct pair $\rho,\sigma \in P$.
\end{itemize}
\end{lemma}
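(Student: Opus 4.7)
The plan is to build each sphere $S_\rho$ by excising a neighborhood of the singular points of $C_\rho$ and gluing in symplectic caps supplied by Corollary \ref{cor:capping_torus_knots}, and then perturbing the caps to achieve positive transversality of all self- and mutual intersections.

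First, I would choose a disjoint collection of small Darboux balls $\{B_p\}$, one around each torus fixed point $p$ of $Y$, sitting inside toric affine charts $U_p \simeq \C^2$. By the local description recalled just before the statement, for each $\rho \in P$ and each fixed point $p$ the intersection $C_\rho \cap B_p$ is either empty, a smooth embedded symplectic disk (when $p$ is a smooth point of $C_\rho$, or not on $C_\rho$ at all), or equals $V(a_\rho,b_\rho) \cap B_p$ for some coprime $a_\rho, b_\rho$ (when $p$ is a singular point of $C_\rho$), in which case its boundary on $\partial B_p \cong S^3$ is the positive transverse torus knot $L(a_\rho,b_\rho)$.

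Second, on the complement $Y \setminus \bigcup_p B_p$, the surface $C_\rho \setminus \bigcup_p B_p$ is smoothly embedded and symplectic. Because every $C_\rho$ is holomorphic with respect to the toric complex structure away from its singular points, any self-intersections of $C_\rho$ outside the balls, as well as any mutual intersections of distinct $C_\rho$ and $C_\sigma$ outside the balls, are automatically transverse positive double points.

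Third, inside each $B_p$, I would replace every singular piece $V(a_\rho,b_\rho) \cap B_p$ by an immersed symplectic disk $\widetilde{D}_\rho \subset B_p$ with the same torus-knot boundary $L(a_\rho,b_\rho)$, as produced by Corollary \ref{cor:capping_torus_knots}. Gluing these caps to the exterior portions of $C_\rho$ yields a smooth immersed symplectic sphere $S_\rho$ in the homology class $[C_\rho] \in H_2(Y)$ (the homology class is unchanged since we are replacing a disk by a disk with the same boundary), and each $S_\rho$ automatically has only positive transverse self-intersections, giving property (a).

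The main obstacle is property (b): arranging that the caps $\widetilde{D}_\rho$ and $\widetilde{D}_\sigma$ intersect each other only in positive transverse double points when distinct $\rho,\sigma \in P$ share a singular fixed point $p$. The key observation is that having only positive transverse intersections is an open condition, and a pair of symplectic surfaces in a symplectic $4$-manifold can always be made to intersect in positive transverse double points by a $C^1$-small, compactly supported perturbation keeping them symplectic; see for instance the discussion of positivity under perturbation in McDuff--Salamon. Carrying out such a perturbation inside each ball $B_p$, supported away from $\partial B_p$ so that the gluings with the exterior are not disturbed and the intersections already established on $Y \setminus \bigcup_p B_p$ are preserved, yields the collection of spheres $\{S_\rho\}_{\rho \in P}$ satisfying both (a) and (b), completing the construction.
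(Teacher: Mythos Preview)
Your overall strategy---excise singularities and glue in the immersed symplectic caps from Corollary~\ref{cor:capping_torus_knots}---matches the paper, but your final step contains a genuine gap. The assertion that ``a pair of symplectic surfaces in a symplectic $4$-manifold can always be made to intersect in positive transverse double points by a $C^1$-small, compactly supported perturbation'' is false. Positivity of intersections is a property of $J$-holomorphic curves for a common $J$, not of arbitrary symplectic surfaces: two symplectic $2$-planes through the origin in $(\R^4,\omega_0)$ can have a transverse negative intersection (e.g.\ the $(x_1,y_1)$-plane and the plane spanned by $(\cos\theta,0,\sin\theta,0)$ and $(0,\cos\theta,0,-\sin\theta)$ for small $\theta>0$). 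The caps coming from Corollary~\ref{cor:capping_torus_knots} are symplectic but \emph{not} holomorphic, so when two distinct cocharacter curves $C_\rho,C_\sigma$ are both singular at the same torus fixed point $p$, you have no control over the sign of the intersections of the two caps $\widetilde D_\rho,\widetilde D_\sigma$ inside $B_p$, and a small perturbation cannot repair negative crossings.

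The paper circumvents this by adding a preliminary step that you omitted: before capping, it translates each $C_\rho$ by a distinct small constant $c_\rho$ in each toric chart, so that the resulting curves $C'_\rho$ have pairwise disjoint singular loci. After this separation one can choose the neighborhood $U$ of each singularity of $C'_\rho$ small enough that $C'_\sigma \cap U = \emptyset$ for every $\sigma \neq \rho$. The cap $D(\rho,p)$ is then inserted in a region untouched by any other curve, and consequently every mutual intersection $S_\rho \cap S_\sigma$ lies in a region where both surfaces are still smooth holomorphic submanifolds. A final generic perturbation then achieves transversality while automatically preserving positivity. Inserting this separation step would close the gap in your argument.
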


\begin{proof} We construct the immersed symplectic spheres $S_\rho$ by modifying $C_\rho$ in a sequence of steps.

\vspace{3pt}

{\bf Step 1.} We first modify the cocharacter closure curves $C_\rho$ to have disjoint singularities. Let $y \in Y$ be a torus fixed point and let $P(y) \subset P$ be the set of cocharacter elements in $P$ with $p \in C_\rho$. If $D \subset \C$ denotes a disk of sufficiently small radius, then
\[
u_\rho(D) \cap u_\sigma(D) = 0 \in \C^2 \text{ if }\rho \neq \sigma \in P_y \quad\text{and}\quad u_\rho(D) \cap u_\sigma(D) = \emptyset \text{ if }\rho \in P_y \text{ and }\sigma \not\in P_y
\]
Now for each $\rho \in P_y$, choose a $C^\infty$-small smooth map $\epsilon_\rho:\C \to \C^2$ such that
\[
\epsilon_\rho = 0 \text{ on }\C \setminus D \qquad \epsilon_\rho = c_\rho \in \C^2 \text{ near } 0 \quad\text{and}\quad c_\rho \neq c_\sigma \text{ if }\rho \neq \sigma
\]
Then let $v_\rho = u_\rho + c_\rho$ and let $C'_\rho$ be given by $v_\rho(\C)$ near $0$ and $C_\rho$ elsewhere.  

\vspace{3pt} 

By performing this modification near every torus fixed point of $Y$, we can produce a set $C'_\rho$ of (singular) surfaces where the singularities of $C'_\rho$ and $C'_\sigma$ are distinct for distinct vectors $\rho$ and $\sigma$ in $P$. By choosing $\epsilon_\rho$ $C^\infty$-small enough we may also assume that the intersection $C'_\rho \cap C'_\sigma$ is contained in a region were both $C'_\rho$ and $C'_\sigma$ are smooth holomorphic sub-manifolds.

\vspace{3pt}

{\bf Step 2.} Next we remove the singularities of $C'_\rho$ for each $\rho \in P$ to produce a smooth immersion of a sphere $S_\rho$. Let $p \in C'_\rho$ be a singular point. By Step 1, we may choose a neighborhood $U$ of $p$ and a holomorphic identification $U \simeq B^4$ such that
 \[C'_\rho \cap U \simeq V(p,q) \subset B^4 \cap B \qquad C'_\sigma \cap B = 0 \text{ if }\sigma \neq \rho\]
 Thus $C'_\rho \cap \partial U$ is a transverse knot equivalent to $L(p,q)$ under a contactomorphism $\partial U \simeq \partial B^4 = S^3$. Thus by Corollary \ref{cor:capping_torus_knots}, $C'_\rho \cap \partial U$ bounds an immersed symplectic disk $D(\rho,p)$ in $U$ with only positive self-intersections. We may thus define $S_\rho$ by the property that
 \[
 S_\rho = D(\rho,p) \text{ near any singular }p \in C'_\rho \quad\text{and}\quad S_\rho = C'_\rho \text{ elsewhere}
 \]

 {\bf Step 3.} Finally, we perturb the symplectic submanifolds 
$S_\rho$ so that $S_\rho \pitchfork S_\sigma$ for each $\rho \neq \sigma$. Since $S_\rho$ and $S_\sigma$ are holomorphic sub-manifolds in a neighborhood of the set $S_\rho \cap S_\sigma$, the resulting intersections are positive.
\end{proof}

\begin{prop} \label{prop:immersed_sphere} Let $Y$ be a smooth toric surface and let $Z$ be a curve class of the form
\[Z = \sum_\rho k_\rho C_\rho \qquad\text{with}\qquad k_\rho \ge 0\quad\text{and}\quad m = \sum_\rho k_\rho\]
Then there exists a symplectic immersion of the form
\[\iota:\Sigma \to Y \qquad \text{with}\quad \Sigma := \sqcup_1^m S^2 \quad\text{and} \quad \iota^*[\Sigma] = [Z]\]
that has only positive, double-point self-intersections.
\end{prop}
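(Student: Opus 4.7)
The plan is to assemble $\iota$ out of small symplectic perturbations of the immersed spheres supplied by Lemma~\ref{lem:cocharacter_spheres}. First I would apply that lemma with $P = \{v_\rho : k_\rho > 0\}$, obtaining for each such $\rho$ an immersed symplectic sphere $S_\rho \subset Y$ with $[S_\rho] = [C_\rho]$, whose self-intersections are positive transverse double points and such that $S_\rho \pitchfork S_\sigma$ consists of positive transverse double points for every distinct pair $\rho, \sigma \in P$.

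Second, for each $\rho$ with $k_\rho > 0$, I would produce $k_\rho$ parallel copies $S_\rho^{(1)}, \dots, S_\rho^{(k_\rho)}$ of $S_\rho$ by small perturbations. Concretely, one picks a compatible almost complex structure $J$ on a neighborhood of $S_\rho$ for which $S_\rho$ is $J$-holomorphic (such a $J$ exists because $S_\rho$ is symplectic), and takes $S_\rho^{(i)}$ to be a nearby $J$-holomorphic sphere obtained from a small generic section of the normal bundle. The $S_\rho^{(i)}$ remain immersed and symplectic, are homologous to $S_\rho$, and -- by the standard positivity of intersections for $J$-holomorphic curves in a complex surface -- meet one another in positive transverse double points (with total algebraic count $\binom{k_\rho}{2} \cdot (C_\rho \cdot C_\rho)$). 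A further generic $C^1$-small perturbation makes every $S_\rho^{(i)}$ transverse to every $S_\sigma^{(j)}$ for $\sigma \ne \rho$; positivity of these intersections is preserved because they come from the local holomorphic model furnished by Lemma~\ref{lem:cocharacter_spheres}. I then define $\iota$ with source $\Sigma := \sqcup_{i=1}^{m} S^2$, where $m = \sum_\rho k_\rho$, as the disjoint union of the immersions realizing $S_\rho^{(i)}$ for all $\rho$ and $1 \le i \le k_\rho$. By construction $\iota_*[\Sigma] = \sum_\rho k_\rho [C_\rho] = [Z]$.

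The main obstacle is case of parallel copies: arranging that the mutual intersections of $S_\rho^{(i)}$ and $S_\rho^{(j)}$ are positive transverse double points when $C_\rho \cdot C_\rho > 0$. This is the step that genuinely uses the symplectic (not merely topological) nature of the $S_\rho$, and it is resolved by the local $J$-holomorphic model: generic perturbations of $J$-holomorphic curves in a $4$-manifold are transverse, and their intersections are automatically positive. Once this is in hand, all remaining transversality and positivity claims are routine consequences of Lemma~\ref{lem:cocharacter_spheres} together with $C^1$-small perturbations.
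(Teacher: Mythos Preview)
Your overall strategy matches the paper's: invoke Lemma~\ref{lem:cocharacter_spheres} to produce the spheres $S_\rho$, take $k_\rho$ parallel copies of each, and assemble them into $\iota$. The one real gap is in your justification that parallel copies of a fixed $S_\rho$ meet only in positive transverse double points. You write that one ``takes $S_\rho^{(i)}$ to be a nearby $J$-holomorphic sphere obtained from a small generic section of the normal bundle,'' but these two descriptions are in tension: a generic section of the normal bundle does not yield a $J$-holomorphic curve, and the existence of genuinely $J$-holomorphic deformations of $S_\rho$ is not automatic---it requires the linearized Cauchy--Riemann operator to be surjective, which (via automatic transversality for immersed spheres in dimension four) comes down to knowing $c_1(\nu(S_\rho)) \ge -1$. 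You never verify this, so as written neither reading of your sentence gives positivity.

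The paper fills exactly this gap by computing the normal bundle degree directly:
\[
\chi(\nu(S_\rho)) \;=\; c_1(Y) \cdot [S_\rho] - \chi(S^2) \;=\; -K_Y \cdot C_\rho - 2 \;\ge\; 0,
\]
the last inequality holding because every primitive cocharacter has $-K_Y \cdot C_\rho \ge 2$ by Corollary~\ref{cor:cocharacter_index}. With the normal bundle of non-negative degree over $S^2$, one can choose sections having only positive zeros (e.g.\ holomorphic sections for a fixed complex structure on the line bundle), and pushing off along these gives parallel copies that intersect each other and the original only positively. This numerical input is the missing ingredient; once you have it, either your $J$-holomorphic route (now justified by automatic transversality) or the paper's direct section argument completes the proof.
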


\begin{proof} Let $P$ be the finite set of cocharacters $\rho$ such that $k_\rho > 0$. Choose a set of immersed spheres $S_\rho$ for each $\rho \in P$ that satisfy the conditions of Lemma \ref{lem:cocharacter_spheres}(a)-(b). Form a collection $\mathcal{S}$ of $m$ spheres in $Y$ by taking a union of $k_\rho$ copies of $S_\rho$ for each $\rho \in P$, and define a map
\[
\jmath:\Sigma \to Y \qquad \text{with}\qquad \jmath_*[\Sigma] = [Z]
\]
by parametrizing each sphere $S \subset Y$ in $\mathcal{S}$ and then perturbing the map to have transverse self-intersections. Note that the Euler characteristic of the normal bundle $\nu(S_\rho)$ is non-negative. Indeed, we have
\[
\chi(\nu(S_\rho)) \cdot  = c_1(Y) \cdot \Sigma - \chi(S^2) = -K \cdot C_\rho - 2 \ge 0
\] 
Therefore the perturbation in the construction of $\jmath$ can be chosen to that the self-intersections of the map are all positive double points. 
\end{proof}

\begin{lemma}[Surgery] \cite[Lem 2.6]{eg2020} \label{lem:surgery} Let $\iota:\Sigma \to X$ be a symplectic immersion of a $2$-manifold $\Sigma$ into a symplectic $4$-manifold $X$. Let $p,q \in \Sigma$ map to a positive, transverse double point $\iota(p) = \iota(q)$. 

\vspace{3pt}

Then there is a symplectic immersion $\tilde{\iota}:\tilde{\Sigma} \to X$ from the $1$-surgery $\tilde{\Sigma}$ of $\Sigma$ along the $0$-sphere $p \cup q$ to $X$ that agrees with $\Sigma$ outside of a small neighborhood of $p$ and $q$.
\end{lemma}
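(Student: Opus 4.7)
The plan is to produce the surgery in a local Darboux-type chart at the double point, using the family of complex hyperboloids $\{zw=\epsilon\}$ in $\C^2$ as a model for the smoothing of a transverse crossing. Concretely, let $x_0 = \iota(p)=\iota(q)$ and let $\Sigma_1,\Sigma_2 \subset X$ denote the two local branches of $\iota(\Sigma)$ near $x_0$. I would first argue that there exist an open neighborhood $U \subset X$ of $x_0$, an open neighborhood $V \subset \C^2$ of $0$, and a symplectomorphism $\phi\colon U \to V$ sending $x_0$ to $0$ with
\[
\phi(\Sigma_1\cap U) = (\C\times 0)\cap V, \qquad \phi(\Sigma_2\cap U)=(0\times \C)\cap V.
\]
The existence of such $\phi$ uses the positivity of the transverse intersection: because $T_{x_0}\Sigma_1$ and $T_{x_0}\Sigma_2$ are transverse symplectic planes with positive intersection form, one can choose an $\omega$-compatible almost complex structure $J$ on $U$ rendering both branches $J$-holomorphic, and then a Darboux/Moser argument for the pair $(\omega,J)$ straightens the branches to the coordinate planes as above.

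Working now in $(\C^2,\omega_{\mathrm{std}})$, for $\epsilon>0$ consider the smooth complex curve $H_\epsilon = \{zw=\epsilon\}$, which is symplectic as a complex submanifold. Fix a small ball $B = B(r) \subset V$ and a thin spherical shell $r_1 < r_2 < r$. Inside $B(r_1)$ the surgered surface will be $H_\epsilon$; outside $B(r_2)$ it will remain $H_0 = (\C\times 0)\cup(0\times \C)$; in the shell $r_1\leq |(z,w)|\leq r_2$ the two are interpolated. Near the $z$-axis part of $H_0$ and for $\epsilon$ small, $H_\epsilon$ is a graph $w=\epsilon/z$ over an annular region in the $z$-plane, and symmetrically near the $w$-axis. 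Choose a smooth cutoff $\chi\colon[0,\infty)\to[0,1]$ with $\chi\equiv 1$ on $[0,r_1]$ and $\chi\equiv 0$ on $[r_2,\infty)$, and define the interpolated surface as the union of the two graphs
\[
w=\chi(|z|)\cdot \epsilon/z \quad\text{and}\quad z=\chi(|w|)\cdot\epsilon/w
\]
(over annuli where they are defined), glued to $H_\epsilon$ inside $B(r_1)$ and to $H_0$ outside $B(r_2)$. By construction the result is a smoothly embedded surface in $V$ agreeing with $H_0$ near $\partial B(r)$ and topologically obtained from $H_0\cap B$ by replacing the nodal ``$\times$'' with a cylinder, i.e.\ by $1$-surgery along $\phi(p)\cup\phi(q)$.

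To conclude, I would transport the surgered surface back to $X$ via $\phi^{-1}$ and declare $\tilde\iota\colon\tilde\Sigma\to X$ to agree with $\iota$ outside $U$ and with the surgered surface inside $U$. The remaining point is to verify symplecticity of the result. Being symplectic (i.e.\ $\omega|_{T\tilde\Sigma}$ non-degenerate) is $C^0$-open in the tangent plane field. The interpolated tangent planes are $C^0$-close to those of $H_0$ uniformly as $\epsilon\to 0$, since $|\epsilon/z^2|\to 0$ on the relevant annular region. For $\epsilon>0$ small enough, the interpolated surface is therefore symplectic. The main obstacle is this final estimate: one must choose $r_1,r_2$ first and then take $\epsilon$ small enough that the cutoff derivatives of $\chi(|z|)\cdot\epsilon/z$ remain small in $C^1$, ensuring that the tangent planes stay inside the open set of symplectic 2-planes. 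All the positivity of the double point is used precisely to pick the sign of $\epsilon$ consistently so that $H_\epsilon$ sits on the ``correct'' side and the replacement is $1$-surgery rather than an unrelated modification.
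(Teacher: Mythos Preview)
The paper does not supply a proof of this lemma; it simply cites \cite[Lem~2.6]{eg2020} and uses the statement. Your argument is the standard local-model smoothing via the family $\{zw=\epsilon\}$ in a Darboux chart, which is precisely the argument one finds in that reference, so there is nothing to compare.

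One small correction to your exposition: the positivity of the double point is used exactly where you first invoke it, namely to arrange a chart in which both branches are complex coordinate lines (for a negative intersection, no compatible $J$ makes both branches holomorphic, so the chart in your first paragraph would not exist). Your closing remark that positivity is used ``precisely to pick the sign of $\epsilon$'' is not right: for any nonzero $\epsilon\in\C$ the curve $\{zw=\epsilon\}$ is a smooth cylinder and the resulting surgery is the same $1$-surgery. This is only a slip in the commentary and does not affect the correctness of the construction.
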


\begin{cor} \label{cor:many_spheres_to_one_sphere} Let $\Sigma = \sqcup_1^m S^2 $ and $\jmath:\Sigma \to X$ be a symplectic immersion of $m$ spheres into a symplectic $4$-manifold $X$ with only transverse double points. Suppose that $\jmath(\Sigma)$ is connected.

\vspace{3pt}

Then there is a symplectic  immersion $\iota:S^2 \to X$ i with only transverse double points. \end{cor}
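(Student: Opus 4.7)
The approach is to apply Lemma \ref{lem:surgery} iteratively, using the connectedness of $\jmath(\Sigma)$ to guarantee at each step that a surgery is available which merges two components into one. I induct on the number of spheres $m$. The base case $m = 1$ is trivial: take $\iota = \jmath$.

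For the inductive step, consider the graph $G$ whose vertices are the sphere components $\Sigma_1, \dots, \Sigma_m$ of $\Sigma$ and whose edges are the positive transverse double points $\jmath(p) = \jmath(q)$ where $p$ and $q$ lie on \emph{distinct} components $\Sigma_i$ and $\Sigma_j$. Since the image $\jmath(\Sigma)$ is the union of the images $\jmath(\Sigma_i)$, each of which is connected, the image is connected if and only if $G$ is connected. As $m \ge 2$, this forces $G$ to contain at least one edge: so there exists a positive transverse double point $\jmath(p) = \jmath(q)$ with $p \in \Sigma_i$ and $q \in \Sigma_j$ for some $i \neq j$.

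Apply Lemma \ref{lem:surgery} at this double point. The result is a symplectic immersion $\tilde{\jmath}:\tilde{\Sigma} \to X$, where $\tilde{\Sigma}$ is obtained from $\Sigma$ by $1$-surgery on the $0$-sphere $\{p,q\}$. Since $p$ and $q$ lie on different components of $\Sigma$, this surgery replaces $\Sigma_i \sqcup \Sigma_j \simeq S^2 \sqcup S^2$ with their connect sum $S^2 \# S^2 \simeq S^2$, leaving the other components unchanged. Therefore $\tilde{\Sigma}$ is a disjoint union of $m-1$ spheres. The new immersion $\tilde{\jmath}$ agrees with $\jmath$ outside a small neighborhood of $\{p,q\}$ and only has positive transverse double points as self-intersections (one fewer than $\jmath$, since the surgered one was resolved and no new ones are created). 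Finally, $\tilde{\jmath}(\tilde{\Sigma})$ is a small modification of $\jmath(\Sigma)$ in a neighborhood of the double point and remains connected. The inductive hypothesis applied to $\tilde{\jmath}$ then yields the desired $\iota: S^2 \to X$.

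The main subtlety, and the only nontrivial input, is the observation that the connectedness of $\jmath(\Sigma)$ forces the existence of a double point straddling two different components; without this one could only increase the genus via surgery rather than reduce the component count. Everything else is a bookkeeping consequence of Lemma \ref{lem:surgery} together with the additivity of Euler characteristic under $1$-handle surgery (each surgery drops $\chi$ by $2$, so starting from $\chi = 2m$ and performing $m-1$ merging surgeries yields $\chi = 2$, consistent with a single sphere).
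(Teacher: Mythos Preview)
Your proof is correct and follows essentially the same approach as the paper: perform $m-1$ surgeries via Lemma \ref{lem:surgery}, each time choosing the surgery $0$-sphere to lie on two distinct components so that the result is again a union of spheres with one fewer component. The paper's proof is a one-sentence version of your argument; your inductive framing and the graph-theoretic justification for the existence of a cross-component double point simply make explicit what the paper leaves implicit.
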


\begin{proof} Perform $m-1$ surgeries (via Lemma \ref{lem:surgery}) on the double points of the map $\jmath$, and on each surgery choose a pair of surgery points on different components of the domain. This will produce the desired map $\iota$. \end{proof}

\subsection{Proof of Theorems \ref{thm:main_upperbound}, \ref{thm:main_upperbound_closed} and \ref{thm:main_upperbound_stable}} \label{subsec:proof_of_thm_closed_and_stable} We now apply the analysis of \S \ref{subsec:torus_knots} and \ref{subsec:symplectic_spheres} to prove Theorems \ref{thm:main_upperbound_closed} \ref{thm:main_upperbound}, and \ref{thm:main_upperbound_stable} from the introduction.

\vspace{3pt}

We start by noting that the $+$ movable curves have non-trivial Gromov--Witten invariants. This uses the Immersed Spheres property (Lemma \ref{lem:immered_spheres}).

\begin{lemma} \label{lem:GW_of_movable_curve} Let $\Omega \subset \R^2$ be rational polytope that is strongly convex and smooth. Let $P$ be a lax tangency constraint and let $A \in \on{Mov}^+(Y_\Sigma)$. Then
\[
\on{GW}_P(Y_\Sigma,A) > 0
\]
\end{lemma}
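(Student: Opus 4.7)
The plan is to represent $A$ by an immersed symplectic sphere with only positive double-point self-intersections and then invoke the Immersed Spheres lemma (Lemma \ref{lem:immered_spheres}). If $A^2 = 0$, the definition of $\on{Mov}_1^+$ forces $A = [C_\rho]$ for an irreducible fiber class with $v_\rho = e_i$. In this case $C_\rho$ is already a smoothly embedded $\P^1$ (the cocharacter $t \mapsto (t,1)$ extends to an embedding of $\P^1$ into $Y_\Sigma$), so $A$ is represented by an embedded symplectic sphere with no self-intersections, and Lemma \ref{lem:immered_spheres} applies directly.

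The main case is $A^2 > 0$. Using Lemma \ref{lem:strongly_convex_movable}, write
\[
A = \sum_{\rho \in \Sigma^+(1)} k_\rho [C_\rho] \quad\text{with}\quad k_\rho \in \Z_{\geq 0}, \qquad m := \sum_\rho k_\rho.
\]
Proposition \ref{prop:immersed_sphere} then supplies a symplectic immersion $\jmath : \sqcup_{1}^{m} S^2 \to Y_\Sigma$ in class $A$ with only positive transverse double-point self-intersections. To reduce to a single immersed sphere via Corollary \ref{cor:many_spheres_to_one_sphere}, I must verify that $\jmath(\sqcup_1^m S^2)$ is connected, which amounts to showing the component spheres pairwise intersect. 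For two distinct primitive cocharacters $v_\rho, v_\sigma \in \Sigma^+(1)$, the curves $C_\rho$ and $C_\sigma$ already meet in the big torus $T \subset Y_\Sigma$: solving $u_\rho(t) = u_\sigma(s)$ yields $|\det(v_\rho, v_\sigma)| > 0$ torus intersection points, and these persist under the $C^\infty$-small perturbation used in the construction of Lemma \ref{lem:cocharacter_spheres}. When only one ray appears, i.e.\ $A = k[C_\rho]$, the hypothesis $A^2 > 0$ combined with Lemma \ref{lem:fiber_classes} forces $C_\rho^2 > 0$, so distinct perturbed copies of $S_\rho$ meet at $C_\rho^2$ positive double points. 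Applying Corollary \ref{cor:many_spheres_to_one_sphere}, I obtain a single immersed symplectic sphere $S \to Y_\Sigma$ in class $A$ with only positive transverse double-point self-intersections.

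Finally I apply Lemma \ref{lem:immered_spheres} to $S$. Since $P$ is lax, each tangency condition $P^i$ is a singleton and involves a single marked point on the domain; one chooses $\# P$ distinct points on $S$ and picks local divisors through the images tangent to $S$ to the prescribed orders, so that $S$ satisfies $P$ in the required sense. Combined with the index matching $\on{ind}(A) = \on{codim}(P)$ that is implicit in the definition of $\on{GW}_P(Y_\Sigma, A)$, Lemma \ref{lem:immered_spheres} yields $\on{GW}_P(Y_\Sigma, A) > 0$.

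The main obstacle is the connectivity step: ensuring that the a priori disjoint union $\sqcup_1^m S^2$ has connected image. The argument above leverages the fact that distinct primitive cocharacters always meet in the open torus, together with positive self-intersection of non-fiber $C_\rho$'s, to rule out the possibility of a disconnected configuration within the class $\on{Mov}_1^+$.
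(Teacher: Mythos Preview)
Your proof is correct and follows essentially the same route as the paper: split into the fiber case $A^2=0$ (a single embedded $\P^1$) and the case $A^2>0$ (build many spheres via Proposition \ref{prop:immersed_sphere}, verify the image is connected, plumb to one sphere via Corollary \ref{cor:many_spheres_to_one_sphere}), then invoke Lemma \ref{lem:immered_spheres} using laxness of $P$. If anything, you supply more detail than the paper on the connectivity step, which the paper simply asserts.
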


\begin{proof} The movable cone is generated by the cocharacter curves by Lemma \ref{lem:strongly_convex_movable}. Thus we write
\[C = \sum_{\rho \in \Sigma^+} k_\rho \cdot C_\rho \qquad\text{where}\qquad C_\rho \ge 0\]
There are two cases on $C$: either $C$ is represented by a single cocharacter with $0$ self-intersection or $C$ has positive self-intersection.

\vspace{3pt}

First, if $C = [C_\rho]$ where $[C_\rho] \cdot [C_\rho] = 0$, then Proposition \ref{prop:immersed_sphere} produces a single immersed sphere $\Sigma$ in $Y_\Sigma$ with positive self-intersections. Since $P$ is lax, we can choose a set of $\# P$ points on $\Sigma$ and isotope $\Sigma$ to satisfy $P$ at those points. We then apply Lemma \ref{lem:immered_spheres} to deduce the result.

\vspace{3pt}

Second, if $C$ satisfies $C \cdot C > 0$ then either $k_\rho > 0$ for more than two rays $\rho$, or $k_\rho > 0$ for a single $\rho$ with $[C_\rho] \cdot [C_\rho] > 0$. Either way, Proposition \ref{prop:immersed_sphere} produces an immersion of spheres with connected image and positive self-intersections. We plumb this collection by Corollary \ref{cor:many_spheres_to_one_sphere} to acquire a single immersed sphere. Then we apply Lemma \ref{lem:immered_spheres}.
\end{proof}

\begin{remark} We note that this argument breaks down for classes of the form $C = k_\rho \cdot [C_\rho]$ with $k_\rho \ge 2$ and $[C_\rho] \cdot [C_\rho] = 0$. In fact, when $P = ((k))$ is the $1$-point, $k$-fold tangency condition the Gromov--Witten invariants are known to vanish.  
\end{remark}

Theorem \ref{thm:main_upperbound_closed}/\ref{thm:main_upperbound_closed_body} is now an easy application of the Gromov--Witten axiom of the RSFT capacities.

\begin{thm} \label{thm:main_upperbound_closed_body} Let $X \to Y_\Sigma$ be a symplectic embedding of a star-shaped domain $X$ into a strongly convex toric surface $Y_\Sigma$ with symplectic form $\omega_\Omega$. Then
\begin{equation} \label{eqn:main_upperbound_closed_body}
\mfk{r}_P(X) \leq \uk_k(\Omega) \quad\text{where}\quad \text{$P$ is lax and } \on{codim}(P) = 2k
\end{equation}
\end{thm}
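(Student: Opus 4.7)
The plan is to combine the Gromov--Witten axiom of the RSFT capacities (Theorem \ref{thm:main_rP_axioms}(d)) with the non-vanishing of tangency Gromov--Witten invariants established in Lemma \ref{lem:GW_of_movable_curve}. The sub-additivity axiom (Theorem \ref{thm:main_rP_axioms}(b)) will then bridge the gap between the equality $-K_Y \cdot C = k+1$ needed for $\on{GW}_P(Y,C)$ to be defined and the inequality $-K_Y \cdot C \geq k+1$ appearing in the definition of $\uk_k$.

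First I would pass to an equivariant toric resolution $\pi \colon \wt{Y}_\Sigma \to Y_\Sigma$, where $\wt{Y}_\Sigma$ is a smooth strongly convex toric surface. Since $Y_\Sigma$ has zero-dimensional singular locus and the embedding $X \to Y_\Sigma$ must land in the smooth locus of $Y_\Sigma$, the embedding lifts to $X \to \wt{Y}_\Sigma$. For each sufficiently small $\varepsilon > 0$, one can construct a K\"ahler form $\omega_\varepsilon$ on $\wt{Y}_\Sigma$ with $[\omega_\varepsilon] = \pi^*[\omega_\Omega] - \varepsilon[E]$ for a divisor $E$ supported on the exceptional locus, agreeing with $\pi^*\omega_\Omega$ outside a small neighborhood of the exceptional divisor; the original embedding remains symplectic with respect to $\omega_\varepsilon$. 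By the blowup formula (Proposition \ref{prop:blowup}), it suffices to prove $\mfk{r}_P(X) \leq \pi^*A_\Omega \cdot C$ for any $C \in \on{Mov}_1(\wt{Y}_\Sigma)^+_\Z$ with $-K_{\wt{Y}_\Sigma} \cdot C \geq k+1$.

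Fix such a class $C$ and set $j := -K_{\wt{Y}_\Sigma} \cdot C - (k+1) \geq 0$. Form the auxiliary lax tangency constraint $P' := P \sqcup ((0))^{\sqcup j}$, adjoining $j$ simple one-point constraints of codimension $2$ each. Then
\[\on{codim}(P') = 2(k+j) = 2c_1(C) - 2 = \on{ind}(C),\]
so $\on{GW}_{P'}(\wt{Y}_\Sigma, C)$ is defined, and Lemma \ref{lem:GW_of_movable_curve} gives $\on{GW}_{P'}(\wt{Y}_\Sigma, C) > 0$. Since $\partial X \cong S^3$, the homological hypotheses $H_1(\partial X) = H_2(\partial X) = H_2(X) = 0$ in Theorem \ref{thm:main_rP_axioms}(d) are satisfied. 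Applying that axiom followed by sub-additivity (Theorem \ref{thm:main_rP_axioms}(b)) yields
\[\mfk{r}_P(X) \leq \mfk{r}_{P'}(X) \leq [\omega_\varepsilon] \cdot C.\]
Letting $\varepsilon \to 0$ and then taking infimum over admissible $C$ produces $\mfk{r}_P(X) \leq \uk_k(\wt{Y}_\Sigma, \pi^*A_\Omega) = \uk_k(\Omega)$, using Proposition \ref{prop:blowup} together with Lemma \ref{lem:fan_formula}.

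The main obstacle I expect is the resolution step: producing the K\"ahler approximations $\omega_\varepsilon$ without disturbing the lifted embedding, and confirming that Lemma \ref{lem:GW_of_movable_curve} applies with the big-and-nef pullback $\pi^*A_\Omega$ in place of an ample polarization. Inspection of the proof of Lemma \ref{lem:GW_of_movable_curve} shows that only the smoothness of the ambient toric surface enters the geometric construction of immersed symplectic spheres from cocharacter curves, so the argument should transfer to $\wt{Y}_\Sigma$ without modification.
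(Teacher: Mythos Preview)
Your proposal is correct and follows essentially the same approach as the paper: reduce to the smooth case by toric resolution, pad the lax constraint $P$ with extra simple point constraints to match $\on{ind}(C)$, invoke Lemma \ref{lem:GW_of_movable_curve} for the non-vanishing of $\on{GW}$, then apply sub-additivity together with the Gromov--Witten axiom and minimize over $C \in \on{Mov}^+$. If anything, you are more careful than the paper about the resolution step (the K\"ahler approximation $\omega_\varepsilon$ and the $\varepsilon \to 0$ limit) and about explicitly verifying the homological hypotheses $H_1(\partial X) = H_2(\partial X) = H_2(X) = 0$ needed for Theorem \ref{thm:main_rP_axioms}(d).
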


\begin{proof} Let $\Omega \subset \R^2$ be the rational polytope. We may assume (after resolving singularities via blow up) that $\Omega$ is smooth. Let $C$ be a movable curve class in $\on{Mov}(Y_\Sigma)^+$ in $Y_\Sigma$ such that
\[\on{codim}(P) \le \on{ind}(C) = 2(c_1(C) - 1) = -2(K_\Omega \cdot C + 1)\]
Let $2m = \on{ind}(C) - \on{codim}(P)$ and let $Q$ be the tangency constraint acquired by taking the union of $P$ and $m$ points with tangency order $0$, i.e. $m$ copies of the tangency condition $((0))$. Note that $Q$ is lax since $P$ is. Thus, by Lemma \ref{lem:GW_of_movable_curve}
\[\on{GW}_Q(Y_\Sigma,C) > 0\]
Therefore, by the Monotonicity and Gromov--Witten axioms of the RSFT capacities in Theorem \ref{thm:tangent_GW_invariants}, we have
\[
\mathfrak{r}_P(X) \le \mathfrak{r}_Q(X) \le [\omega] \cdot C = A_\Omega \cdot C
\]
Minimizing over all such $+$ movable classes yields the result.
\end{proof}

Theorem \ref{thm:main_upperbound_stable} is the following corollary of the above Gromov--Witten calculation. Recall that this result only applies to the tangency constraints $((k))$, denoted by $P(k)$ in the introduction.

\begin{cor} \label{cor:main_upperbound_stable_body} Let $X \to Y_\Sigma \times Z$ be a symplectic embedding of a star-shaped domain $X$ into the product of a strongly convex toric surface $Y_\Sigma$ and a closed symplectic manifold $Z$. Then
\begin{equation} \label{eqn:main_upperbound_stable_body}
\mfk{r}_{P(k)}(X) \leq \uk_{k+1}(\Omega) 
\end{equation}
\end{cor}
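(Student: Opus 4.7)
The plan is to reduce the non-vanishing of a Gromov--Witten invariant on $Y_\Sigma \times Z$ to the corresponding invariant on $Y_\Sigma$ via the stabilization Lemma~\ref{lem:GW_stabilization}, and then invoke the Gromov--Witten axiom of the RSFT capacities. First I carry out the standard preliminary reductions: approximate $\Omega$ by smooth rational polytopes and equivariantly resolve $Y_\Sigma$ to be smooth (exactly as in the proof of Theorem~\ref{thm:main_upperbound_closed_body}), and assume $Z$ is semi-positive so that Lemma~\ref{lem:GW_stabilization} applies.

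The key preparatory step is that Lemma~\ref{lem:GW_stabilization} demands the exact index match $\on{ind}(C) = \on{codim}(P(k)) = 2 + 2k$ in $Y_\Sigma$, equivalently $-K_{Y_\Sigma} \cdot C = k + 2$. I would therefore first prove a reduction lemma showing that $\uk_{k+1}(\Omega)$ may be computed as an infimum over curve classes with $-K_{Y_\Sigma} \cdot C = k+2$ exactly, rather than the weaker inequality $-K \cdot C \geq k+2$ in the definition. In the lattice formulation this amounts to the following: given any $\bar v \in U(k+1)$ with more than $k+2$ vectors, I would iteratively replace a pair $(v_i, v_j)$ by their sum $v_i + v_j$ whenever this is nonzero, reducing the vector count by one at each step. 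Subadditivity of the $\Omega$-norm makes $\sum \|v_i\|_\Omega^*$ non-increasing, and the zero-sum constraint $\sum v_i = 0$ combined with the $U$-condition forces at least two vectors with nonzero component outside each $\on{span}(e_j)$; with this the pair to be combined can be chosen so that the resulting sequence remains in $U(k+1)$.

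Once a class $C \in \on{Mov}_1(Y_\Sigma)^+_\Z$ with $-K_{Y_\Sigma} \cdot C = k+2$ and $A_\Omega \cdot C \leq \uk_{k+1}(\Omega) + \eps$ is in hand, the index condition is satisfied by construction. Laxness of $P(k) = ((k))$ and Lemma~\ref{lem:GW_of_movable_curve} then give $\on{GW}_{P(k)}(Y_\Sigma, C) > 0$, and Lemma~\ref{lem:GW_stabilization} applied with $X = Y_\Sigma$ and $Y = Z$ yields $\on{GW}_{P(k)}(Y_\Sigma \times Z, C \otimes [\on{pt}]) > 0$. Finally, the Gromov--Witten axiom of Theorem~\ref{thm:main_rP_axioms}(d) -- applicable to the embedding $X \to Y_\Sigma \times Z$ since the star-shaped $X$ has $H_2(X) = H_1(\partial X) = H_2(\partial X) = 0$ -- gives $\mfk{r}_{P(k)}(X) \leq [\omega_{Y_\Sigma} \oplus \omega_Z] \cdot (C \otimes [\on{pt}]) = A_\Omega \cdot C \leq \uk_{k+1}(\Omega) + \eps$, and sending $\eps \to 0$ finishes the proof.

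The main obstacle is the reduction lemma. A naive approach of augmenting $P(k)$ with extra $((0))$-point constraints (as in the proof of Theorem~\ref{thm:main_upperbound_closed_body}) fails here because a point constraint has codimension $2\dim(Z)+2$ in $Y_\Sigma \times Z$ rather than $2$, and one computes that the virtual dimension of the resulting moduli space of $C \otimes [\on{pt}]$-curves becomes strictly negative. So the argument genuinely needs $-K_{Y_\Sigma} \cdot C = k+2$, and one must verify that the combining operation on $\bar v$ can always be organized to avoid producing a sequence whose vectors all lie in $\on{span}(e_j)$ for some fixed $j$. This requires a case analysis tracking the ``good'' vectors (those outside each $\on{span}(e_j)$) and exploiting the minimum count of them guaranteed by the zero-sum condition.
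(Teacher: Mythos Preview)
Your approach is the same as the paper's---apply Lemma~\ref{lem:GW_stabilization} to transfer the non-vanishing of $\on{GW}_{P(k)}$ from $Y_\Sigma$ to $Y_\Sigma \times Z$, then invoke the Gromov--Witten axiom Theorem~\ref{thm:main_rP_axioms}(d). The paper's proof is only two lines: it asserts $\on{GW}_{P(k)}(Y_\Sigma \times Z, A \otimes [\on{pt}]) = \on{GW}_{P(k)}(Y_\Sigma, A) > 0$ and then says ``the rest of the proof is identical to that of Theorem~\ref{thm:main_upperbound_closed_body}.''

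You have, however, spotted a genuine subtlety that the paper glosses over. The stabilization Lemma~\ref{lem:GW_stabilization} is stated only under the exact index hypothesis $\on{ind}(A) = \on{codim}((k))$ in $Y_\Sigma$, i.e.\ $-K_{Y_\Sigma}\cdot C = k+2$, whereas the definition of $\uk_{k+1}$ only demands $-K_{Y_\Sigma}\cdot C \geq k+2$. In the proof of Theorem~\ref{thm:main_upperbound_closed_body} this slack is absorbed by padding $P$ with extra point constraints $((0))$, each of codimension $2$; as you correctly observe, that padding trick fails in $Y_\Sigma \times Z$ because a point constraint there has codimension $2 + \dim Z > 2$, and the stabilization lemma does not apply to the padded constraint $Q$ anyway. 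Your reduction lemma---replacing pairs $v_i, v_j$ by $v_i + v_j$ using subadditivity of $\|\cdot\|_\Omega^*$ while keeping the sequence in $U(k+1)$---is the natural way to close this gap, and the case analysis you outline does go through (the key point being that the zero-sum condition forces at least two vectors outside each $\on{span}(e_j)$, so one can always find a legal pair to merge). So your proof is essentially the paper's intended argument, made rigorous at a point where the paper is terse.
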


\begin{proof} By the stabilization property of the Gromov--Witten invariants $\on{GW}_{((k))}$ (Lemma \ref{lem:GW_stabilization}) and Theorem \ref{thm:main_upperbound_closed_body}, we have
\[\on{GW}_P(Y_\Sigma \times Z, A \otimes [\on{pt}]) = \on{GW}(Y_\Sigma,A) > 0\]
The rest of the proof is identical to that of Theorem \ref{thm:main_upperbound_closed_body}
\end{proof}

Finally, a simple limiting argument using Theorem \ref{thm:main_upperbound_closed}/\ref{thm:main_upperbound_closed_body} demonstrates Theorem \ref{thm:main_upperbound}/\ref{thm:main_upperbound_body}.

\begin{thm} \label{thm:main_upperbound_body} Let $\Omega \subset \R^2$ be a strongly convex moment domain and let $P$ be a lax tangency constraint of codimension $2k$. Then
\[\mfk{r}_P(X_\Omega) \leq \uk_k(\Omega)\]
\end{thm}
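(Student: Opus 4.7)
The plan is to deduce Theorem~\ref{thm:main_upperbound_body} from its closed-target counterpart Theorem~\ref{thm:main_upperbound_closed_body} by an outer approximation of $\Omega$ by rational-sloped polytopes. First, I would construct a sequence of strongly convex, rational-sloped moment polytopes $\Omega_i \subset [0,\infty)^2$ with $\Omega \subset \on{int}(\Omega_i)$ and $\Omega_i \to \Omega$ in the Hausdorff metric. Such a sequence can be built by slightly dilating $\Omega$ and then truncating by finitely many rational-sloped half-planes supporting the dilate, keeping the coordinate-axis edges fixed so that the resulting polytopes remain strongly convex.

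Next, for each $i$, the rational polytope $\Omega_i$ determines a pseudo-polarized strongly convex toric surface $(Y_{\Sigma_i},\omega_{\Omega_i})$ into which the interior of $X_{\Omega_i}$ embeds symplectically (see \S\ref{subsec:toric_varieties}). Since $\Omega$ is compactly contained in the interior of $\Omega_i$ and $X_\Omega=\mu^{-1}(\Omega)$ is compact, we have $X_\Omega\subset\on{int}(X_{\Omega_i})$, which restricts to a symplectic embedding $X_\Omega \hookrightarrow Y_{\Sigma_i}$. The domain $X_\Omega$ is star-shaped at the origin because strong convexity of $\Omega$ makes $\widehat{\Omega}$ a convex compact neighborhood of $0$. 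Applying Theorem~\ref{thm:main_upperbound_closed_body} to this embedding with the lax tangency constraint $P$ of codimension $2k$ will then give
\[\mfk{r}_P(X_\Omega) \le \uk_k(\Omega_i)\]
for every $i$. Taking $i\to\infty$ and using the Hausdorff-continuity of $\uk_k$ on convex domains established in Lemma~\ref{lem:lk_uk_properties}(c), the right-hand side converges to $\uk_k(\Omega)$, producing the desired inequality.

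The only step requiring genuine verification is the construction of the approximating sequence $\Omega_i$ with the combined properties of rational slopes, strong convexity, and outer Hausdorff convergence; but this reduces to a standard convex-geometry approximation in which strong convexity is easily preserved because $\Omega \cap \{x_j = 0\}$ is a closed interval for each $j$ and the approximating polygons can be chosen to contain a fixed neighborhood of each such axis segment. Everything else is purely formal, relying only on monotonicity of $\mfk{r}_P$ from the axioms (Theorem~\ref{thm:main_rP_axioms}(a)), the already-proven closed bound, and the continuity of $\uk_k$.
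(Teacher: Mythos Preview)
Your proposal is correct and follows essentially the same limiting argument as the paper: approximate $\Omega$ from outside by strongly convex rational polytopes $\Omega_i$, apply the closed upper bound (Theorem~\ref{thm:main_upperbound_closed_body}) to the resulting embedding, and pass to the limit via the Hausdorff continuity of $\uk_k$ (Lemma~\ref{lem:lk_uk_properties}(c)). The only cosmetic difference is that the paper routes through the intermediate inequality $\mfk{r}_P(X_\Omega)\le\mfk{r}_P(X_{\Omega_i})$ using monotonicity before invoking the closed bound, whereas you embed $X_\Omega$ directly into $Y_{\Sigma_i}$; both are equivalent.
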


\begin{proof} Choose a sequence $\Omega_i$ of strongly convex, smooth and rational polytopes such that
\[(1 - 1/i) \cdot \Omega_i \subset \Omega \subset \Omega_i\]
The same identity then holds for the domains corresponding to $\Omega$ and $\Omega_i$. Thus by the Scaling and Monotonicity axioms (Theorem \ref{thm:main_rP_axioms}(a)), we have
\[
\mathfrak{r}_P(X_{\Omega}) \le \mathfrak{r}_P(X_{\Omega_i}) \le \mathfrak{u}_k(\Omega_i)
\]
Since $\mathfrak{l}_k(\Omega_i) \to \mathfrak{l}_k(\Omega)$ as $i \to \infty$ by Lemma \ref{lem:lk_uk_properties}(c), we can take the limit as $i \to \infty$ to acquire the desired result.
\end{proof}

\bibliography{bw_jc}
\bibliographystyle{acm}

\end{document}